\newcommand{\pp}[1]{{\mathfrak #1}}
\newcommand{\proj}[1]{\operatorname{Proj}(#1)}
\newcommand{\length}{\operatorname{length}}
\newcommand{\spec}{\operatorname{Spec}}
\newcommand{\rank}{\operatorname{rank}}
\newcommand{\rk}{\operatorname{rk}}
\newcommand{\divs}{\operatorname{div}}
\newcommand{\tor}{\operatorname{Tor}}
\newcommand{\cl}{\operatorname{Cl}}
\newcommand{\ch}{\operatorname{ch}}
\newcommand{\td}{\operatorname{td}}
\newcommand{\charact}{\operatorname{char}}
\newcommand{\HS}{\pp h} 
\newcommand{\HSP}{\pp p} 
\newcommand{\hk}[2]{\varphi_{#1}(#2)} 
\newcommand{\hkq}[2]{\Phi_{#1}(#2)} 
\newcommand{\hks}[1]{\operatorname{HKS}_{#1}(t)} 
\newcommand{\gp}[1]{\mathbb Z {#1}} 
\newcommand{\Ff}{ {\bf {\sc F}}} 
\theoremstyle{plain}
\newtheorem{theorem}{Theorem}[section]
\newtheorem{lemma}[theorem]{Lemma}
\newtheorem{corollary}[theorem]{Corollary}
\theoremstyle{definition}
\newtheorem{example}[theorem]{Example}
\newtheorem{application}[theorem]{Application}
\newtheorem{chunk}[theorem]{}
\newtheorem{remark}[theorem]{Remark}
\theoremstyle{remark}
\newtheorem{question}[theorem]{Question}
\numberwithin{equation}{section}
\title[Hilbert-Kunz Functions] 
{ The Shape of Hilbert-Kunz Functions}
\author{C-Y. Jean Chan }
\begin{document}

\maketitle

\begin{abstract}

We discuss Hilbert-Kunz function from when it was originally defined to its recent developments. 
A brief history of Hilbert-Kunz theory is first recounted. Then we review several techniques involved in 
the study of Hilbert-Kunz functions by presenting some illustrative proofs without going into  
details of the technicalities.

The second part of this article focuses on the Hilbert-Kunz function of an affine normal semigroup ring and relates it to 
Ehrhart quasipolynomials. We pay extra attention to its periodic behavior and discuss how 
the cellular decomposition constructed by Bruns and Gubeladze fits into the computation of the functions. 
The closed forms of the Hilbert-Kunz function of some examples are presented.
The discussion in this part highlights the close relationship between Hilbert-Kunz function and  Ehrhart theory.

\end{abstract}

\bigskip


\section{Motivation and Outline}\label{motivation} 

In the 1960s, Kunz~\cite{Kun} introduced a function in order to study the regularity of integral domains. 
Monsky named this function after Hilbert and Kunz 
in \cite[1983]{Mon1}. Despite its close resemblance with the usual Hilbert-Samuel functions, Hilbert-Kunz functions, in many ways, behave very differently and are highly unpredictable. 

The main aim of this article is to provide an overview of the development of Hilbert-Kunz functions in the recent decades, and 
to link Hilbert-Kunz theory to Ehrhart theory that may potentially provide accessible tools to investigate 
Hilbert-Kunz functions for certain families of interesting rings and varieties. 

In the 1990s, a series of works were done for algebraic curves by Buchweitz, Chen, Han, Monsky, and Pardue (\cite{HaM, Par1994, BuC, Mon2, Mon3, Mon4}  in chronological order). They considered projective plane curves whose homogeneous coordinate rings are of the form of $k[x,y,z]/(g)$. 
Hypersurfaces in higher dimensions were considered by Han and Monsky~\cite{Ha1992, HaM}, Chang~\cite{ShTCh1993}, Chiang and Hung~\cite{ChH1998}. In addition, Seibert~\cite{Sei1997} worked on the Cohen-Macaulay rings of finite representation type.  
These studies revealed some unpredictable behaviors and mysterious nature of Hilbert-Kunz functions. 
Starting from the early 1990s, they have been investigated in waves of studies from different points of view with a variety of machinery. For instance, Han and Monsky developed the theory of representation rings that provided a means to compute these functions for diagonal hypersurfaces. Brenner, Frakhruddin and Trivedi studied the subject using sheaf theory (\cite{Br1, Br2, FaT}) and gave a systematic treatment for the case of smooth algebraic curves. Kurano linked algebraic intersection theory with Hilbert-Kunz function and produced an expression of the function in terms of local Chern characters (\cite{K16, K5}). Bruns and Watanabe offered a strong insight into Hilbert-Kunz function and multiplicity and proved that the computations in the setting of normal affine semigroup rings can be understood via Ehrhart theory (\cite{Bru1, Wat}). Nevertheless, it is in general difficult to compute Hilbert-Kunz function and its associated multiplicity. However, in light of Ehrhart theory and effective computer algebra systems developed in recent years, it is likely that we will be able to formulate accessible questions regarding Hilbert-Kunz functions. 

In comparison, many more studies have been done for Hilbert-Kunz multiplicity than its functional counterpart. 
The literature concerning Hilbert-Kunz multiplicity alone is a rich entity, and it is beyond the scope of this manuscript
to give a comprehensive account. The discussions in this paper are mainly focused on the functions.
While making no attempt to be complete, we mention some results on the multiplicity that are 
relevant to Hilbert-Kunz functions and link as many relevant scholarly works as possible. 

Valuable overviews of Hilbert-Kunz theory can be found in \cite{Br_arXiv} by Brenner that offers a very rich source  for the subject, and in \cite{Hu2013} by Huneke which provides alternative approaches to many results different from the original proofs. 

The outline of the paper is as follows. 
Section~\ref{open} introduces the definitions and a brief history of the theory. While presenting interesting questions arising from the literature, we also address how Hilbert-Kunz theory is related to other important notions and studies in commutative algebra. 
Section~\ref{techniques} reviews some of the techniques applied to the studies of Hilbert-Kunz functions. 
These include representation rings and $p$-fractals (\cite{Ha1992, HaM, Tei2002, MoT2004, MoT2006}), divisor class groups (\cite{HMM, ChK1}), the cohomology of vector bundles (\cite{Br1}), intersection theory (\cite{K16, K5}), and cellular decompositions on the fundamental domain (\cite{Bru1}). 
We pay close attention to the key steps and extract the crucial facts that contribute to establishing the targeted results. 
Readers may skip the subsection regarding each technique and return to it as needed. 
We are hopeful that our sketches may be helpful to those who wish to access the ideas in these proofs.
Section~\ref{affineEhrhart} is dedicated to normal affine semigroup rings. 
In this setting, Hilbert-Kunz function is closely related to the lattice point enumerator as shown first in Watanabe~\cite{Wat}. Bruns~\cite{Bru1} builds a pathway that bridges the Hilbert-Kunz and Ehrhart theories in a rigorous manner. 
This idea will be elaborated in this section. 
Section~\ref{example} includes examples done by counting lattice points or estimating with {\tt Macaulay2}. 
Most examples presented in Section~\ref{example} and comments about the techniques made 
throughout Sections~\ref{techniques}, \ref{affineEhrhart}, \ref{example}  are due to the author's own studies and observations. 
They can serve as starting points for further rigorous investigations.

\section{History in Brief}\label{open}

Throughout this paper, $p$ denotes a prime number and $e$ denotes a nonnegative integer 
which is often the argument of functions under consideration. 
Let $(R, \pp m)$ be a $d$-dimensional local ring of positive characteristic $p$.
For any ideal $I$, let $I^{[p^e]}$ denote the ideal 
generated by elements of the form of $a^{p^e}$ for all $a \in I$. 
Since $R$ has characteristic $p$, $I^{[p^e]}$ can in fact be generated by the 
$p^e$-th power of the elements in any given generating set of $I$. 
The ideal $I^{[p^e]}$ is called the $p^e$-th Frobenius power of  $I$ and
is an $\pp m$-primary ideal if $I$ is $\pp m$-primary. All modules in this paper are finitely generated. 

The Frobenius map $f: R \rightarrow R$ takes $r \in R$ to $f(r)=r^p$ and is a ring homomorphism.
Naturally one may consider $R$ as a module, denoted by $^1\!R$, over itself via restriction of scalars along $f$.
When $R$ is reduced, $f$ is injective so $R$ is isomorphic to its image $R^p$. In that case,
one can equivalently consider $R$ as a module over the subring $R^p$ which is how Kunz viewed it. 
Thus as an abelian group, $^1\!R$ is equal to $R$ but its module structure is  given by 
$r \cdot a = r^p a$ for all $r \in R$ and $a \in {^1\!R}$. 
We say $R$ is {\em $F$-finite} if the Frobenius map $f$ is a finite morphism. 
(See Miller~\cite{Mi2003} for a general background regarding the Frobenius endomorphism.)

In 1969, Kunz proved that the flatness of $R$ as an $R^p$-module characterizes its regularity. 
Precisely, Kunz~\cite{Kun} proved that $R$ is regular if and only if it is a reduced  flat  module over $R^p$. 
In fact his proof can be extended to show that the following are all equivalent without the reduced condition: 
(1) $R$ is regular; (2) the composed homomorphism $f^e$ is flat for all positive integers $e$; 
(3) $f^e$ is flat for some positive integer $e$ 
({\em c.f.} \cite[Theorem~21.2]{24hr}).
Kunz achieved this by studying the numbers
$\ell(R/m^{[p^e]})$ as $e $ increases and by applying Cohen's structure theorem for complete local rings. 
In particular, we have 

\begin{theorem}[Kunz~\cite{Kun}, 1969]\label{Kunz}
Let $R$ be a Noetherian local ring of dimension $d$. 
Then $R$ is regular if and only if $ \ell(R/m^{[p^e]}) = (p^e)^d $. 
\end{theorem}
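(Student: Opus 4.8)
The plan is to prove the two implications separately; the forward one is short, and the converse carries the weight. Suppose first that $R$ is regular. Then $\pp m$ is generated by a regular system of parameters $x_1,\dots,x_d$, which is an $R$-regular sequence since regular local rings are Cohen--Macaulay; hence $x_1^{p^e},\dots,x_d^{p^e}$ is again a regular sequence, and it generates $\pp m^{[p^e]}$. One then invokes the elementary identity $\ell\big(R/(z_1^{a_1},\dots,z_d^{a_d})\big)=a_1\cdots a_d\,\ell\big(R/(z_1,\dots,z_d)\big)$ for a regular sequence $z_1,\dots,z_d$ with finite-length quotient — proved by induction on $d$, filtering $R/(z_1^{a_1},\dots,z_d^{a_d})$ along the powers of $z_d$ and using that $z_d^{\,a_d}$ is a nonzerodivisor modulo $(z_1^{a_1},\dots,z_{d-1}^{a_{d-1}})$, or read off from the isomorphism $\gr{R}\cong k[X_1,\dots,X_d]$. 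With $z_i=x_i$, $a_i=p^e$, and $\ell(R/\pp m)=1$, this gives $\ell(R/\pp m^{[p^e]})=(p^e)^d$.

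For the converse, assume $\ell(R/\pp m^{[p^e]})=(p^e)^d$ for all $e$. First replace $R$ by $\widehat R$, which changes neither these lengths nor regularity; then, since $p=0$ in $R$, Cohen's structure theorem writes $R\cong S/I$ with $S=k[[X_1,\dots,X_n]]$ a power series ring over a field $k\cong R/\pp m$, where $n=\dim_k\pp m/\pp m^2$ is the minimal number of generators of $\pp m$ and $I\subseteq\pp n^2$ with $\pp n=(X_1,\dots,X_n)$. Since $n\ge d$ always, with equality precisely when $R$ is regular, and since $n=d$ already forces $I=0$ — for then $\dim S/I=d=n=\dim S$, and an ideal of the domain $S$ whose quotient has full dimension must vanish — it suffices to deduce $n=d$. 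Using the monomial $k$-basis $\{X^\alpha:0\le\alpha_i<p^e\}$ of $S/\pp n^{[p^e]}$ and the exact sequence $0\to I/(I\cap\pp n^{[p^e]})\to S/\pp n^{[p^e]}\to S/(I+\pp n^{[p^e]})\to 0$, the hypothesis becomes $\dim_k\big(I/(I\cap\pp n^{[p^e]})\big)=(p^e)^n-(p^e)^d$ for every $e$.

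The crux — and the step I expect to be the main obstacle — is to rule this out when $I\ne 0$: a nonzero ideal inside $\pp n^2$ cannot occupy that large a portion of $S/\pp n^{[p^e]}$. Dividing by $(p^e)^d$ and letting $e\to\infty$, the hypothesis forces the Hilbert--Kunz multiplicity $e_{HK}(R)=\lim_e\ell(R/\pp m^{[p^e]})/(p^e)^d$ to be $1$; one then appeals to the fact that $e_{HK}(R)\ge 1$ in general while $e_{HK}(R)=1$ forces $R$ to be regular — a ``multiplicity one'' characterization in the spirit of Nagata's theorem, transported to the Frobenius powers — with the non-unmixed cases disposed of separately, since there the exact equality $\ell(R/\pp m^{[p^e]})=(p^e)^d$ already fails for elementary reasons (a lower-dimensional associated prime forces a strictly larger value). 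Equivalently, the task is to show that for $I\ne 0$ the error term $\dim_k\big(I/(I\cap\pp n^{[p^e]})\big)=(p^e)^n-\ell(R/\pp m^{[p^e]})$ is $(p^e)^n$ minus a quantity asymptotic to $e_{HK}(R)(p^e)^d$ with $e_{HK}(R)>1$; controlling this growth rate — equivalently, that a non-regular local ring has Hilbert--Kunz multiplicity strictly greater than $1$ — is the real content, and everything around it is bookkeeping.
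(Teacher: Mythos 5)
The paper itself gives no proof of this statement; it is quoted from Kunz with only a one-line indication of the method (``by studying the numbers $\ell(R/\pp m^{[p^e]})$ as $e$ increases and by applying Cohen's structure theorem''), together with the remark that the real content is the equivalence of regularity with flatness of $R$ over $R^p$. So your proposal has to be judged on its own. The forward implication is correct and complete: Frobenius powers of a regular system of parameters form a regular sequence generating $\pp m^{[p^e]}$, and the colength identity $\ell\big(R/(z_1^{a_1},\dots,z_d^{a_d})\big)=a_1\cdots a_d\,\ell\big(R/(z_1,\dots,z_d)\big)$ gives $(p^e)^d$. The reduction of the converse to the complete case and the bookkeeping with $S/I$, $n=\dim_k\pp m/\pp m^2$, and the exact sequence are also fine.

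The gap is exactly where you flag it, and it is fatal as written: you dispose of the crux by citing ``$e_{HK}(R)=1$ forces $R$ regular.'' That is not a lemma available to you here — it is the Watanabe--Yoshida theorem (2000), which this very survey presents in Subsection~2.1 as the answer to a question Kunz \emph{asked} precisely because his 1969 theorem did not yield it; its known proofs (tight closure, or Huneke--Yao) are substantially harder than Kunz's theorem and require unmixedness, which you wave off as ``disposed of separately for elementary reasons'' without an argument. Worse, by dividing by $(p^e)^d$ and passing to the limit you discard the one thing that makes Kunz's converse elementary: the \emph{exact} equality $\ell(R/\pp m^{[q]})=q^d$ at a finite level $q$. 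Kunz's actual route is to observe that this length is the minimal number of generators of $R$ as a module over $R^{q}$ (equivalently of $^e\!R$ over $R$), compare it with the generic rank $q^d$ obtained by localizing at a minimal prime of the Cohen presentation $S/I$, and conclude that equality forces this module to be \emph{free}; flatness of the Frobenius then characterizes regularity. That freeness step, not the asymptotic multiplicity-one statement, is the missing idea, and without it your converse is a reduction of an easier theorem to a harder one rather than a proof.
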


Let $M$ be  a finitely generated $R$-module and $I$ an $\pp m$-primary ideal.
In 1983, Monsky~\cite{Mon1} called the function $\hk{M,I}{e} : \mathbb Z_{\geq 0} \rightarrow \mathbb Z$
\[ \hk{M,I}{e}: e \longrightarrow  \ell _A(M/I^{[p^e]}M) \]
the {\em Hilbert-Kunz function of $M$ with
  respect to $I$}.
Although it depends on
both $M$ and $I$, when there is no ambiguity, we will simply write it as $\hk{M}{e}$.
In particular, by the Hilbert-Kunz function of the ring $R$, we mean $M = R$ and $I=\pp m$, the maximal ideal.
For a finitely generated $R$-module $M$ of dimension $d$, Monsky also considered the limit 
\begin{equation}\label{HKmult}
 e_{HK}(M, I) := \lim_{e \rightarrow \infty} \frac{\hk{M,I}{e}}{(p^{e})^d} 
\end{equation}
and proved the following results :

\begin{theorem}[Monsky~\cite{Mon1}, 1983]\label{thmMonsky} 
Let $(R,\pp m)$ be a Noetherian local ring of positive characteristic
$p$ and $\dim R =d$. Let $I$ be an $\pp m$-primary ideal and $M$ a
finitely generated $R$-module of dimension $d$. Then

$(a)$ The limit in $($\ref{HKmult}$)$ exists and is always a positive real number.

$(b)$ The Hilbert-Kunz function is of the form of 
\begin{equation*}
\hk{M,I}{e} = e_{HK}(M,I)\, (p^e)^{d} + O ((p^e)^{d-1}).
\end{equation*}

$(c)$ If $R$ is a one-dimensional complete local domain and $\dim M=1$, then $\hk{M,I}{e} = e_{HK}(M,I) \cdot p^e + \delta_e $, 
where $e_{HK}(M;I)$ is an integer and $\delta_e$ is an eventually periodic function. 
\end{theorem}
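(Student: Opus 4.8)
The plan is to prove part $(c)$ of Theorem~\ref{thmMonsky} by reducing to the study of fractional ideals in a one-dimensional complete local domain and exploiting the fact that the integral closure behaves well under the Frobenius. Let $(R,\pp m)$ be a one-dimensional complete local domain with fraction field $K$ and residue field $k$ (which we may harmlessly assume to be perfect, or at least work $F$-finitely; the general case follows by a standard base change argument). Let $\bar R$ denote the integral closure of $R$ in $K$; since $R$ is a complete local domain of dimension one, $\bar R$ is a complete discrete valuation ring, module-finite over $R$, with valuation $v$. The first step is to observe that any finitely generated torsion-free $R$-module $M$ of rank $r$ can be embedded in $K^r$, and after clearing denominators we may assume $R^r \subseteq M \subseteq \bar R^r$; it then suffices to treat the case $r=1$ and reduce the general case to it by a filtration whose quotients are rank-one (torsion quotients contribute only to the eventually periodic term, since a finite-length module $T$ has $\ell(T/I^{[p^e]}T) = \ell(T)$ for $e \gg 0$).

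Next I would reduce to fractional ideals. For a rank-one torsion-free module we may assume $M$ is a fractional ideal with $R \subseteq M \subseteq \bar R$. The key computation is then $\ell(M/I^{[p^e]}M)$. Using the conductor $\pp c = (R :_R \bar R)$, one writes $\ell(M/I^{[p^e]}M) = \ell(\bar R/I^{[p^e]}\bar R) - \ell(\bar R/M) + \ell(I^{[p^e]}\bar R / I^{[p^e]}M)$, and since $\bar R$ is a DVR the module $\bar R/M$ has fixed finite length while $\ell(\bar R / I^{[p^e]}\bar R) = v(I^{[p^e]}) = p^e\, v(I)$ for an $\pp m$-primary $I$, where $v(I) = \min\{v(x): x\in I\}$. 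The remaining term $\ell(I^{[p^e]}\bar R/I^{[p^e]}M)$ is where the periodicity enters: one shows it depends only on the residue of $p^e$ modulo a suitable integer (essentially the index of the value group generated by $v(I)$, interacting with $v(M)$), because multiplying by a generator of $I^{[p^e]}$ in $\bar R$ just translates the picture inside $\bar R$ by $p^e v(I)$, and the length of the relevant quotient of $\bar R$-lattices is governed by where $p^e v(I)$ sits relative to $v(M)$ and $v(\pp c)$. Collecting terms, $\hk{M,I}{e} = e_{HK}(M,I)\,p^e + \delta_e$ with $e_{HK}(M,I) = v(I)\cdot\operatorname{rk}_{\bar R}(M\otimes \bar R)$ visibly an integer and $\delta_e$ eventually periodic.

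The main obstacle, I expect, is handling the case where $\bar R$ is not a free $R$-module and, more seriously, where the residue field extension $\bar R/\pp{m}\bar R$ over $k$ is nontrivial or $k$ is imperfect, so that $^1(\bar R)$ need not be finitely generated over $\bar R$ and the naive length bookkeeping above can fail. The standard fix is to pass to the completion of the maximal unramified extension (or simply enlarge $k$ to its perfect closure), check that this changes neither $\hk{M,I}{e}$ nor $e_{HK}$ up to the desired error, and only then run the valuation-theoretic argument; verifying that this base change is harmless for the Hilbert-Kunz function is the technical heart. A secondary subtlety is making the reduction from general finite-length contributions precise when $M$ is not torsion-free: here one uses that $0 \to T \to M \to M/T \to 0$ with $T$ the torsion submodule gives $\hk{M}{e} = \hk{M/T}{e} + \hk{T}{e}$ up to a bounded (hence, after adjusting, eventually periodic) discrepancy coming from $\operatorname{Tor}_1^R(M/T, R/I^{[p^e]})$, which again stabilizes because $M/T$ is a second syzygy over the one-dimensional ring after going far enough out.
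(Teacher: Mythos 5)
You should first note that the survey quotes this theorem from Monsky without reproducing a proof; the only machinery it offers in its direction is Theorem~\ref{RobFrob}, whose part $(b)$, applied to the subadditive function $g(M)=\ell(M/IM)$ on the category of finitely generated modules of dimension at most $d$ (using $g({}^e M)=\ell(M/I^{[p^e]}M)$ and the sandwich $I^{n(q-1)+1}\subseteq I^{[q]}\subseteq I^{q}$ for positivity), is what yields parts $(a)$ and $(b)$. Your proposal addresses only part $(c)$: the existence of the limit and the asymptotic form in arbitrary dimension $d$ are never argued, and the valuation-theoretic method you set up is intrinsically one-dimensional, so it cannot be promoted to cover $(a)$ and $(b)$. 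That is the first genuine gap.

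Within part $(c)$ the reduction to the normalization $\overline{R}$ (a complete DVR) and the identification of the leading term are essentially right, except that $e_{HK}(M,I)$ should be $r\cdot v(I)\cdot[\overline{R}/\pp m_{\overline{R}}:R/\pp m]$ — you are computing lengths over $R$, not over $\overline{R}$, so the residue degree cannot be dropped. The more serious problem is that the eventual periodicity of $\delta_e$, which is the entire content of $(c)$ beyond what $(b)$ already gives, is asserted rather than proved. Your mechanism — that $\ell(I^{[q]}\overline{M}/I^{[q]}M)$ depends only on where $qv(I)$ sits modulo a value-group index — does not survive scrutiny: writing each generator as $a_i=u_i t^{v(a_i)}$ with $u_i$ a unit of $\overline{R}$, one has $I^{[q]}M=\sum_i u_i^{q}t^{qv(a_i)}M$, so the relevant quotient depends on the unit parts $u_i^{q}$ and not only on the valuations; when the residue field is infinite the elements $u_i^{q}$ are not eventually periodic, so periodicity of the length is not a congruence statement about $q$. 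What your argument does deliver is boundedness of $\delta_e$ (via $\ell(\overline{M}/M)$ and the conductor), i.e., a weak form of $(b)$ in dimension one. Closing the gap requires an additional finiteness argument — e.g., that the quotient lattices $t^{-qv(I)}I^{[q]}M$ range over a finite set of isomorphism classes permuted by a fixed self-map induced by Frobenius — which is where Monsky's proof does its real work and which is missing here. The handling of torsion via $\hk{M,I}{e}=\hk{M/T,I}{e}+\ell(T)$ for $e\gg0$ is fine, but the claim that the $\operatorname{Tor}_1$ discrepancy is "after adjusting, eventually periodic" is again an assertion of exactly the property to be proved.
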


In this paper, we write $g(x)=O(f(x))$ if there is a constant $C$ independent of $x$ such that 
$| g(x) | \leq C | f(x) |$ for $x$ large enough (or $x \gg 1$). 

Monsky named the limit in (\ref{HKmult}) the {\em Hilbert-Kunz multiplicity of
  $M$} {\em with respect to $I$}. The limit is exactly the coefficient of the dominating term $(p^e)^d$ in $\hk{M, I}{e}$. 
  If $I =\pp m$, we drop $I$ from the notation, and simply write $e_{HK}(M)$ and $\hk{M}{e}$. 
By replacing $R$ by $R/\rm{annih}(M)$, modulo the annihilator of $M$, we may always assume that $\dim M=\dim R$ and thus Theorem~\ref{thmMonsky} still holds.

Next we recall the familiar Hilbert-Samuel functions which concerns the length of $M$  modulo the usual powers of 
an $\pp m$-primary ideal $I$: 
\[ \HS_{M, I} (n) : = \ell ( M/I^n M). \]  
The Hilbert-Samuel multiplicity is defined as 
\[ i( M, I) : = d! \lim_{n \rightarrow \infty} \frac{ \HS_{M,I} (n) }{n^d} . \]
There exists a polynomial $\HSP_{M,I}(n)$ with rational coefficients so that 
$ \HS_{M,I} (n) = \HSP_{M,I} (n)$ for all large enough $n$. More precisely,
\[ \HSP_{M,I} (n) = \frac{1}{d!} i(M,I) \, n^d + \text{ (lower degree terms in $n$).}\] 

Also commonly known and studied are the Hilbert functions for graded rings or modules, and Buchsbaum-Rim function
which is the module version of Hilbert-Samuel function via Rees rings
(c.f. \cite{BrH,CLU,R1}).
Similar to Hilbert-Samuel function, Hilbert function and Buchsbaum-Rim function are polynomials of $n$ for large $n$, and their leading coefficients are always rational numbers.  

However, unlike the typical Hilbert-type functions just mentioned, the behavior of Hilbert-Kunz functions is rather unpredictable 
as seen in the next two examples. 
In the next section, upon the consideration of Theorem~\ref{RobFrob}, we will provide some observations that set apart Hilbert-Kunz functions from 
Hilbert and Hilbert-Samuel functions. 
These obersvaions also show that it is often necessary to consider the higher homology modules when it comes to Hilbert-Kunz functions.  

\begin{example}[Kunz~\cite{Kun}, 1969, Example~4.6(b)]\label{exKunz}
Set $R = \kappa [[ x,y]]/ (y^4 - x^3 y)$ (corresponding to four different lines) where $\kappa$ is an algebraically closed field. 
Then $\hk{R}{e} = 4 p^e - 3  , \text{ if } p \equiv 1 \pmod 3 $; and 
$\hk{R}e = 4 p^e  + \delta_e $, if $p \equiv -1 \pmod 3$ where $\delta_e= -3 $ if $e$ is even and $-4$ if $e$ is odd. 
\end{example} 

\begin{example}[Monsky~\cite{Mon1}, 1983, p.~46]\label{exMonsky} 
Set $R = \mathbb Z/p [[ x,y]]/(x^5 - y^5)$ and $p \equiv \pm 2 \pmod 5$. Then
 $\hk {R}e = 5 p^e + \delta_e$ where $\delta_e = -4$ if $e$ is even and $-6$ if $e$ is odd.
\end{example}

There is no effective algorithm for computing Hilbert-Kunz multiplicity. 
Hilbert-Kunz multiplicity was long thought to be rational, but this was proved not to be the case (see \cite{Br_arXiv}). 
In general, the question regarding the rationality of Hilbert-Kunz multiplicity has attracted serious research efforts since the notion 
was introduced in \cite{Mon1} and the debate was once a popular pastime for decades.  
The question regarding how to effectively compute Hilbert-Kunz function remains widely open. 

Hilbert-Kunz function was initially defined for local rings. 
One may apply the same definition for graded rings over a field $\kappa$ of positive characteristic 
with respect to their graded maximal ideals, or 
affine semigroup rings over such $\kappa$ with respect to the maximal ideals generated by elements in the semigroup. 
Which case we are dealing with will be clear from the context in each section or example.

\subsection{Multiplicity}\label{sub_multi} 

Although this article focuses on Hilbert-Kunz functions, it would be incomplete without touching upon some developments on the multiplicity. 

By Theorem~\ref{Kunz}, being regular for a local ring $R$ is equivalent to having a nice Hilbert-Kunz function, which in turn implies 
$e_{HK}(R) =1$.  Then Kunz asked if the property $e_{HK}(R,\pp m)=1$ is sufficient for $R$ being regular. This was proved 
by Watanabe and Yoshida~\cite{WY1} for unmixed local rings, and also by Huneke and Yao~\cite{HuY} without 
using the tight closure techniques as done in \cite{WY1}. 

A series of studies regarding  various levels of singularities and the estimation of lower
bounds for Hilbert-Kunz multiplicities can be found in the works of Watanabe, Yoshida, Blickle, Enescu, Shimomoto, Aberbach, 
Celikbas, Dao, Huneke, and Zhang
\cite{WY2, WY4,  WY3, BlE, EnS, AbEn2008, AbEn2013, CeDHZ2012}.

Another algebraic notion related to  singularities is tight closure, introduced by Hochster and Huneke~(\cite{Ho2004,HoH1990,Hu1998}). 
The subjects of tight closure and Hilbert-Kunz multiplicities share a close relationship. 
In fact, under some mild conditions, the tight closure of an $\pp m$-primary ideal $I$ is the largest ideal containing $I$ that has the same Hilbert-Kunz multiplicity as $I$~(\cite{HoH1990}). 
The introduction of \cite{EtY} by Eto and Yoshida offers a nice comparison between Hilbert-Kunz and Hilbert-Samuel multiplicities. It describes a parallel relationship of integral closure to Hilbert-Samuel multiplicity versus  tight closure to Hilbert-Kunz multiplicity. The literature on the relationship between these two notions is extremely rich. 
Before leaving this topic, we point out yet another connection between the two theories. By understanding the semistability of vector bundles on projective curves, Brenner brought out the geometric interpretation of tight closures and successfully solved some open problems in the tight closure theory~({\em c.f.} \cite{Br3lecture}). This technique has been proved to be effective also in Hilbert-Kunz theory which will be reviewed in Subsection~\ref{sub_sheaf}. 

As we will see in the next section, it is sometimes natural to consider the Euler characteristic after applying 
the Frobenius functor, and the Hilbert-Kunz function can be expressed in terms of the Euler characteristic. 
By the singular Riemann-Roch theorem, Kurano made a connection between the Hilbert-Kunz function with local Chern characters and proved that the Hilbert-Kunz multiplicity characterizes rings that are numerically equivalent to 
Roberts rings. (See Subsection~\ref{sub_Chern} for {\em numerical equivalence}.)  
In the 1960s, Serre defined the intersection multiplicity in terms of the $\tor$-functors~(\cite{Ser1961}). 
Several conjectures followed, some of which remain unsolved.
Roberts proved the vanishing conjecture for complete intersections and isolated singularities by using intersection theory~(\cite{Ro1}). 
Independently, this was also proved by Gillet and Soul\'e using $K$-theory~(\cite{GiSo1985, GiSo1987}). 
Kurano called a ring $R$ a {\em Roberts ring} 
if the only nontrivial component in the Todd class of $R$ belongs to the subgroup of codimension zero in the Chow group~(\cite{Ku2001}). 
This condition is satisfied by rings for which the vanishing theorem holds in Roberts' result. 
If $R$ is further assumed to be a homomorphic image of a regular local ring and also Cohen-Macaulay, then it is 
numerically equivalent to a Roberts ring if and only if the Hilbert-Kunz multiplicity satisfies the condition $e_{HK}(I) = \ell (R/I)$ for any $\pp m$-primary ideal $I$ of finite projective dimension~(\cite{K16}). 

Going back to the mystery of the multiplicity, Monsky initially suspected that $e_{HK}(R)$ 
should always be rational~(\cite[1983]{Mon1}).  
Even though all the examples with explicitly known Hilbert-Kunz multiplicities do take on rational values,  
the question itself remained stubbornly unresolved. Later, Monsky conjectured otherwise in \cite[2008]{Mon6}. Eventually Brenner~\cite[2013]{Br_arXiv} constructed a module that has an irrational multiplicity leading to the existence of three dimensional rings with irrational Hilbert-Kunz multiplicity.

Still it is natural to ask under what conditions or for what family of rings the Hilbert-Kunz multiplicities are rational. 
Some of the known cases include 

\begin{enumerate} 
\item Regular local rings ($e_{HK}(R) =1$, Kunz\cite{Kun}).
\item Complete local domains of dimension one ($e_{HK}(R) \in \mathbb Z$, Monsky~\cite{Mon1}). 
\item Algebraic curves -- two-dimensional standard graded algebra over an algebraically closed field: 
\begin{itemize} 
\item $R$ is normal (Brenner~\cite{Br2} and Trivedi~\cite{Tr2005} independently). 
\item $R = k[x,y,z]/ (f)$ for plane cubics ($\deg f =3$) (Monsky~\cite{Mon2, Mon5, Mon2007, Mon2011}, Buchweitz and Chen~\cite{BuC}, Pardue~\cite{Par1994}).
\end{itemize} 
\item Hypersurfaces 
\begin{itemize}
\item Diagonal hypersurfaces (Han and Monsky\cite{Ha1992, HaM}) 
\item Some special families (Kunz~\cite{Kun1976}; Monsky and Teixeira \cite{Tei2002, MoT2006}) 
\end{itemize}
\item $F$-finite Cohen-Macaulay rings of finite Cohen-Macaulay type (Seibert~\cite{Sei1997}). 
\item Full flag varieties and elliptic curves (Fakhruddin and Trivedi~\cite{FaT}). 

\item Stanley-Reisner rings  and binomial hypersurfaces (Conca~\cite{Co1996}). 

\item Segre product of polynomial rings (Eto and Yoshida~\cite{EtY}). 

\item Segre product of any finite number of projective curves (Trivedi~\cite{Tr2018}). 

\item Normal affine semigroup rings (Watanabe~\cite{Wat}). 
\end{enumerate} 

In \cite{Wat}, the value $\ell(R/I^{[p^e]})$ is obtained by counting the lattice points within a certain relevant region defined by the affine semigroup. 
This approach inspires our discussions in Sections~\ref{affineEhrhart} and \ref{example} for the Hilbert-Kunz functions. 

It would be interesting to know what other families, especially in higher dimensions, produce rational $e_{HK}(R)$. 
More generally, what are the hidden conditions or properties common to these examples? 

\subsection{Functions}\label{sub_fcn}

From now on, we will use $q$ to denote $p^e$ unless we want to emphasize its explicit dependence on $e$.  
If $\hk{}{e}$ is expressed as a function in $p^e$, we write $\hkq{}{q}$ to highlight this feature, namely, $\hk{}{e} = \hkq{}{q}$. 
We will also use $\alpha$ in place of $e_{HK}$. Hilbert-Kunz functions have the following functional form for $q \gg 1$:  

By Theorem~\ref{Kunz}  (Kunz~\cite[1969]{Kun}), 
if $R$ is a regular local ring of dimension $d$, then 
\begin{equation}\label{fcn:Kunz}
 \hk{R,\pp m}{e} = \hk{R}{e} = \hkq{R}{q} = q^d .
 \end{equation}
 
By Theorem~\ref{thmMonsky}($b$) (Monsky~\cite[1983]{Mon1}), for  an arbitrary Noetherian local ring $R$ and 
finitely generated $R$-module $M$ with respect to ideal $I$, 
\begin{equation}\label{fcn:Monsky}
\hk{M, I}{e} = \hkq{M,I}{q} = \alpha \, q^{d} + \mathcal O(q^{d-1}).
 \end{equation}

Huneke, McDermott and Monsky~\cite[2003]{HMM} refines (\ref{fcn:Monsky}) for an excellent normal domain $R$ with perfect residue field to the following form which identifies the next order term
\begin{equation}\label{fcn:HMM}
\hk{M,I}{e}=\hkq{M,I}{q}= \alpha \, q^{d} + \beta \, q^{d-1}+  \mathcal O(q^{d-2}),
\end{equation}
where $\beta$ is called the {\em second coefficient}. It is written as $\beta_I(M)$  
if we wish to emphasize that $\beta$ is a function of $M$ with respect to $I$ or $\beta(M)$ if $I$ is the maximal ideal. 
The normality condition on the ring was further relaxed by Kurano and the author~\cite{ChK1}, and independently Hochster and Yao~\cite{HoY}.

As noted in \cite{HMM}, the third coefficient in (\ref{fcn:HMM}) in general does not exist.
For example, as computed by Han and Monsky~\cite{HaM}, the domain
$R = \mathbb Z/5\mathbb Z[ x_1, x_2, x_3, x_4] / (x_1^4 + x_2^4 + x_3^4 + x_4^4) $
has  $\hk{R}{e} = \frac{168}{61} (5^n)^3 - \frac{107}{61} (3^n)$. 
The end term $- \frac{107}{61} (3^n)$ is $O( 5^n)$ as expected in (\ref{fcn:HMM}) but is not in the form of  $c( 5^n )+ O(1)$ with some constant $c$. 
In Subsection~\ref{sub_classgp}, we will point out that the normality  condition (R1)$+$(S2) can be loosened to be (R1$'$), 
a condition similar to (R1). However, as seen in Examples~\ref{exKunz} and \ref{exMonsky}, this new condition cannot be further relaxed. 

In the earlier studies of Hilbert-Kunz theory, the entire function was computed in many cases. 
These include certain projective plane curves and hypersurfaces by Kunz, Han, Monsky, Pardue, Buchweitz, and Chen ~\cite{Kun1976, HaM, Par1994, Co1996, BuC, Mon2, Mon3, Mon4}, Stanley-Reisner rings by Conca~\cite{Co1996}, and Cohen-Macaulay rings of finite representation types by Seibert~\cite{Sei1997}. The outcome of these investigations shows that Hilbert-Kunz functions are periodic in some cases but not always. 

Despite this evidence, it is still interesting to ask how close the Hilbert-Kunz functions are to the form of a polynomial in $q$, or the next best possibility,  a quasipolynomial in $q$. By a quasipolynomial, we mean a function in the form of a polynomial whose coefficients are periodic functions. The functions obtained in Examples~\ref{exKunz} and \ref{exMonsky} are quasipolynomials. 
Some cases have been studied and computed, for instance Conca~\cite{Co1996}, Seibert~\cite{Sei}, Miller, Robinson and Swanson~\cite{MiS2013, RoSw2015}, but a systematic treatment is still lacking. 
This will be discussed in Sections~\ref{affineEhrhart} and \ref{example} for normal affine semigroup rings.

Before moving on to the next section for some established techniques applied in the study of Hilbert-Kunz functions, we briefly mention some more recent developments related to Hilbert-Kunz theory.  

In recent years, Hilbert-Kunz multiplicity has been generalized to be taken with respect to ideals that are not primary to the maximal ideal. This is done by using the length of the local cohomology at the maximal ideal. This notion, known as the {\em generalized} Hilbert-Kunz multiplicity, was proved to exist and further developed by Epstein and Yao~\cite{EpY2017}, Hern\'andez and Jeffries~\cite{HeJ2018}, and Dao and Smirnov~\cite{DaS2020}. It should be noted that this is different from another generalized version for monomial ideals, primary to a maximal ideal, considered by Conca, Miller, Robinson and Swanson~\cite{Co1996, MiS2013, RoSw2015} (see Subsection~\ref{sub_comb}). 

A potential extension of Hilbert-Kunz multiplicity to the characteristic zero setting has been proposed. 
In this regard, {\em limit} Hilbert-Kunz multiplicity is considered. When the limit exists, its uniformity is also of interest; see Application~\ref{limitHK} for  more details and relevant references. 

It is also known, from the work of 
Huneke and Leuschke~\cite{HuL2002}, Singh~\cite{Si2005}, Yao~\cite{Yao2006},  and Aberbach~\cite{Ab2008},
that the theory of $F$-signature is closely related to that of Hilbert-Kunz multiplicity . 
In particular, it is proved by Tucker, Polstra, Caminata and De Stefani~\cite{Tu2012, PoT2018, CaDeS2019} 
that $F$-signature functions have the same approximate functional forms as Hilbert-Kunz functions. 
In the case of affine semigroup rings, the work of Watanabe~\cite{Wat}, Bruns~\cite{Bru1}, Singh~\cite{Si2005}, 
and Von Korff~\cite{VonK2011} show that the same effective methods can be used to compute
these two sets of functions and multiplicities. We will present the techniques of \cite{Bru1} in Subsection~\ref{sub_BG}. 


\section{Techniques in Hilbert-Kunz Functions}\label{techniques}  

In this section, we review some of the techniques that have been applied repeatedly in investigating Hilbert-Kunz functions. 
In doing so, we wish 
to address the depth of Hilbert-Kunz theory, and to make the technical proofs more accessible.
The review here should be taken as complementary reading to the original proofs.
Our summaries are not meant to replace them.

To keep the paper self-contained, we will define the terms commonly used in this paper to avoid confusion and ambiguity. 
However, for the definitions of those notions that require separate background, 
we will refer to appropriate references so that the discussions remain focused on the key ideas.
In these cases, we provide a means of processing them to help the readers move forward. 

Unexplained notation and terminology can be looked up in the following references: 
Cutkosky~\cite{Cut}, Hartshorne~\cite{Ha} and Shafarevich~\cite{Sh1,Sh2} for algebraic geometry; 
Fulton~\cite{F} and Roberts~\cite{R1} for intersection theory.
For an algebraic approach to locally free sheaves as modules and Chern characters, we recommend Part II, especially Chapter 9, of \cite{R1}. 
An affine semigroup ring can sometimes be viewed as a subalgebra generated by finitely many monomials of a polynomial ring over a field, but we highly recommend some basic knowledge in their connection to toric varieties as presented in Cox, Little and Schenck~\cite{CLS}. Stanley~\cite{St2} and Miller and Sturmfels~\cite{MSt} are excellent sources for combinatorics theory. 

Whenever possible, we adopt the notation  from the original papers from which we are presenting the techniques. 
There may be some overlapping notation. But we believe the confusion is minimal as they appear in different contexts.

We begin by presenting two functors arising from the Frobenius map. 

Let $R$ be a Noetherian local ring of positive characteristic $p$ with perfect residue field.
Recall that the Frobenius homomorphism is the map $f: R \rightarrow R$, $f(r)= r^p$. The Frobenius homomorphism induces two functors. One is the {\em functor by restriction of scalars}: 
\[M \mapsto {^1\!M} \]
where $^1\!M$ is the module over $R$ via $f$. 
Precisely, for any $m \in M$, if $(m)_1$ denotes the element $m$ considered as an element in $^1\!M$, then
$r\cdot (m)_1 = (r^{p} m)_1$ for any $r \in R$.
The functor by the restriction of scalars $M \mapsto {^1\!M}$ is an exact functor ({\em c.f.} \cite[Section~7.3]{R1}). 
In what follows, we assume that $R$ has a perfect residue field and that $^1\!R$ is a finitely generated $R$-module, that is, $R$ is $F$-finite. 
Note that the ring $R$ is $F$-finite if $R$ is a complete local ring with perfect residue field or a localization of a ring of finite type over 
a perfect field. 
If $R$ is $F$-finite, the rank of ${^1\!R}$ is $p^d$ if rank is defined ({e.g.} $R$ is a domain). 
If $R$ is regular, then by Kunz's theorem, ${^1\!R}$ is flat so it is also free.
For any positive integer $e$, the module $^e\!M$ is obtained by the self-composition of the functor;
or equivalently,  $^e\!M$ is the module over $R$ via the composite function $f^e=f \circ \cdots \circ f$. 

The other functor is the {\em Frobenius functor} $\Ff$ (also known as the {\em Peskine-Szpiro functor}) from the category of left $R$-modules to itself via tensor product over $f$, 
\[ \Ff(M) : = {^1\!R} \otimes _R M. \] 
This is considered as an extension of scalars along $f$. The resulting $R$-module structure on $\Ff(M)$ is via the left factor. 
More precisely, for any $a, r \in R$ and $m \in M$, $a (r \otimes m) = ar \otimes m$ and $1 \otimes rm = r^p \otimes m$. 
As left $R$-modules, $\Ff(R) \cong R$. 
Therefore, 
\[ \Ff(R/I) = {^1 R} \otimes_R R/I = {R}/ f(I) = R/I^{[p]}\]
with its ordinary left scalar multiplication. One can iterate this process and see that $\Ff^e(R) = R$ and $\Ff^e(R/I) = R/I^{[p^e] } $.

\begin{remark}\label{rmk_Frobenius}
Due to commutativity, $^1\!R \otimes_R M \cong M\otimes_R {^1\!R}$, but the module structure of 
$M\otimes_R {^1\!R}$ is via the right factor. That is, for any $a, r \in R$ and $m \in M$, 
$a( m \otimes r) = m \otimes ra$ and $m r \otimes 1 = m \otimes r^p$. 
\end{remark}

The Peskine-Szipiro functor is covariant, additive, and right exact from the category of left $R$-modules to itself. 
When $\Ff$ is applied to a complex of free modules of finite ranks, all free modules remain the same up to isomorphism since tensor product commutes with direct sum, but the entries of the matrices of the boundary maps are raised to the $p$-th power. A notable theorem by Peskine and Szpiro states that if $\mathbb G_{\bullet}$ is a finite complex of finitely generated free $R$-modules, then $\mathbb G_{\bullet}$ is acyclic if and only if $\Ff(\mathbb G_{\bullet})$ is acyclic ({\em c.f.} \cite[Theorem~8.7]{BrH}).
 
Next we present an analogous Hilbert function for Frobenius powers. 
Its form is proved by induction on dimension similarly to the case of usual Hilbert functions, but the induction steps are more complicated~(\cite{Sei, R1}). 
Let $\mathcal C$ be a category of finitely generated $R$-modules of dimension no bigger than a nonnegative integer $n$. 
We say that  $\mathcal C$ satisfies the property ($\dagger$) if whenever $M', M, M''$ in $\mathcal C$ form a short exact sequence 
$0 \rightarrow M' \rightarrow M \rightarrow M'' \rightarrow 0$, then $M$ is in $\mathcal C$ if and only if $M'$ and $M''$ are in $\mathcal C$. 
The property ($\dagger$) implies that $M \in \mathcal C$ if and only if $R/\pp p \in \mathcal C$ for all prime ideals in the support of $M$. 
Since $M$ and $^1M$ have the same support, it follows that if $M\in \mathcal C$, then $^1\!M \in \mathcal C$. 

\begin{theorem}[Seibert~\cite{Sei}; Roberts~\cite{R1}, Theorems~7.3.2 \& 2.3.3]\label{RobFrob} 
Let $\mathcal C$ be a category of finitely generated $R$-modules of dimension no bigger than a nonnegative integer $n$. 
Assume that $\mathcal C$ satisfies the property $(\dagger)$. 
Let $g$ be a function from $\mathcal C$ to $\mathbb Z$. 

$(a)$ If $g$ is additive on short exact sequences, then for any $M \in \mathcal C$, 
there exists a polynomial in $p^e$ with rational coefficients $a_0, \dots, a_n$ such that 
\[ g( ^eM ) = a_n (p^e)^n + a_{n -1} (p^e)^{n -1} + \cdots + a_0\]
for all integer $e \geq 0$. 

$(b)$ If $g(M) \leq g(M') + g(M'')$ whenever $0 \rightarrow M' \rightarrow M \rightarrow M'' \rightarrow 0$ is exact in 
$\mathcal C$, then there exists a real number $c(M)$ such that 
\[ g( {^eM}) = c(M) (p^e)^n + O( (p^e)^{n-1})\]
for all integers $e \geq 0$.
\end{theorem}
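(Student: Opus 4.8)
The plan is to prove both statements together by induction on the dimension bound $n$. The mechanism is that the functor $M \mapsto {}^1\!M$ is exact, carries $\mathcal C$ into $\mathcal C$, and commutes with finite direct sums, together with one input in which $F$-finiteness and the perfect residue field are genuinely used: for a prime $\pp p$ with $\dim R/\pp p = m$, the module ${}^1(R/\pp p)$ is a faithful, torsion-free $R/\pp p$-module of rank $[\kappa(\pp p):\kappa(\pp p)^p] = p^{m}$. For the base case $n=0$, every $M\in\mathcal C$ has finite length and is an iterated extension of copies of $R/\pp m$; since the residue field is perfect the inverse Frobenius gives ${}^e(R/\pp m)\cong R/\pp m$, so ${}^e\!M$ is again built from $\ell(M)$ copies of $R/\pp m$. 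If $g$ is additive this gives $g({}^e\!M)=\ell(M)\,g(R/\pp m)$, a constant; if $g$ is only subadditive, $g({}^e\!M)$ stays in a bounded interval and, being $\mathbb Z$-valued, is eventually constant.

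For the inductive step fix $M\in\mathcal C$ with $\dim M\le n$ and a prime filtration $0=M_0\subset\cdots\subset M_t=M$ with $M_i/M_{i-1}\cong R/\pp p_i$; property $(\dagger)$ keeps each $R/\pp p_i$ in $\mathcal C$ and makes the full subcategory of modules of dimension $\le n-1$ again satisfy $(\dagger)$. When $g$ is additive, $g({}^e\!M)=\sum_i g({}^e(R/\pp p_i))$, so it suffices to treat $M=R/\pp p$; if $\dim R/\pp p\le n-1$ the inductive hypothesis applies directly, so assume $\dim R/\pp p=n$. The torsion-free rank-$p^n$ structure supplies two short exact sequences in $\mathcal C$,
\[
0 \to (R/\pp p)^{p^n} \to {}^1(R/\pp p) \to Q \to 0,
\qquad
0 \to {}^1(R/\pp p) \to (R/\pp p)^{p^n} \to Q' \to 0,
\]
with $\dim Q,\ \dim Q'\le n-1$ (a nonzero torsion module over the Noetherian local domain $R/\pp p$ has dimension at most $n-1$).

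Applying the exact functor ${}^e$ to the first sequence and writing $h(e)=g({}^e(R/\pp p))$, additivity yields the first-order recursion $h(e+1)=p^n\,h(e)+g({}^eQ)$, where $g({}^eQ)$ is a polynomial in $p^e$ of degree $\le n-1$ with rational coefficients by the inductive hypothesis. Such a recursion, together with the single value $h(0)=g(R/\pp p)$, forces $h(e)$ to be a polynomial in $p^e$ of degree $\le n$ with rational coefficients: one exhibits a polynomial solution — the coefficient of $(p^e)^j$ for $j<n$ is obtained by dividing the $j$-th coefficient of $g({}^eQ)$ by $p^j-p^n\ne 0$, and the coefficient of $(p^e)^n$ is fixed by matching $h(0)$ — and observes that a first-order recursion is determined by its initial value. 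This proves $(a)$.

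For $(b)$ the scheme is the same with additivity replaced by two-sided estimates: the first sequence gives $g({}^{e+1}(R/\pp p))\le p^n h(e)+g({}^eQ)$, the second gives $g({}^{e+1}(R/\pp p))\ge g(({}^e(R/\pp p))^{p^n})-g({}^eQ')$, and combining these with the inductive bounds $g({}^eQ),\ g({}^eQ')=O((p^e)^{n-1})$ in a $\limsup/\liminf$ squeeze for $h(e)/(p^e)^n$ produces the real number $c(R/\pp p)$ and the error $O((p^e)^{n-1})$; the general $M$ then follows by rerunning the filtration argument with inequalities. The hard part will be precisely this subadditive case: because $g$ controls $g({}^{e+1}\!\cdot)$ from only one side at a time, one must use \emph{both} inclusions between ${}^1(R/\pp p)$ and $(R/\pp p)^{p^n}$ and argue that the lower estimate $g(({}^e(R/\pp p))^{p^n})\ge p^n h(e)-O((p^e)^{n-1})$ survives. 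That step, together with establishing $[\kappa(\pp p):\kappa(\pp p)^p]=p^{\dim R/\pp p}$ for $F$-finite rings with perfect residue field, is where the real content lies; the rest is a transcription of the classical proof that Hilbert-Samuel functions are eventually polynomial.
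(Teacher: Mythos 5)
Your part $(a)$ is essentially the argument of the cited sources (the paper itself only refers to Seibert and Roberts and does not reproduce a proof): induction on the dimension bound, prime filtrations, the two exact sequences comparing ${}^1(R/\mathfrak p)$ with $(R/\mathfrak p)^{p^n}$, and the first-order recursion $h(e+1)=p^nh(e)+g({}^eQ)$ solved by matching coefficients against $p^j-p^n\neq 0$. That part is correct, with the one genuinely nontrivial input being $[\kappa(\mathfrak p):\kappa(\mathfrak p)^p]=p^{\dim R/\mathfrak p}$, which is Kunz's rank formula for $F$-finite rings with perfect residue field and which you rightly isolate.

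Part $(b)$, however, has a real gap at exactly the step you defer, and that step cannot be ``argued to survive'': subadditivity applied to the split sequence $0\to A\to A\oplus B\to B\to 0$ yields only $g(A\oplus B)\leq g(A)+g(B)$, so the lower estimate $g\bigl(({}^e(R/\mathfrak p))^{p^n}\bigr)\geq p^n\,g({}^e(R/\mathfrak p))-O((p^e)^{n-1})$ is simply not available. Indeed, bare subadditivity does not suffice for the conclusion: over $R=k[[t]]$ with $k$ perfect, $g(M)=-(\operatorname{rank}M)^2$ is $\mathbb Z$-valued and subadditive on short exact sequences (rank is additive and $-(r'+r'')^2\leq -r'^2-r''^2$), yet $g({}^eR)=-p^{2e}$ is not of the form $c\,p^e+O(1)$. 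The missing ingredient is the additional hypothesis, present in the sources and satisfied in every application (where $g=\ell\circ F$ for an additive functor $F$), that $g$ is additive on direct sums; with it your two-sided squeeze goes through and is finished by the Cauchy-sequence lemma (the paper's Lemma~\ref{estimate}). Two smaller repairs: in the base case $n=0$ of $(b)$, ``bounded and integer-valued, hence eventually constant'' is not a valid inference (bounded integer sequences can oscillate); the correct reason is that ${}^eM\cong(R/\mathfrak m)^{\ell(M)}$ for $e\gg 0$ since $\mathfrak m^{[p^e]}M=0$ eventually. And in the subadditive case the reduction of a general $M$ to prime cyclic modules via a filtration also produces only one-sided inequalities, so the recursion must be run on $M$ itself, comparing ${}^1M$ with $M^{p^n}$ through maps that are isomorphisms at the $n$-dimensional primes, rather than through the filtration.
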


In Seibert~\cite{Sei} and Roberts~\cite[Section~3]{R1}, one can find interesting applications of Theorem~\ref{RobFrob} 
to the existence of Hilbert-Kunz multiplicity originally proved by Monsky~\cite{Mon1} and the asymptotic Euler characteristic $\chi_{\infty}$ defined by Dutta~\cite{Du1}. The latter is now known as {\em Dutta multiplicity}. 

Now we provide some elementary observations based on Theorem~\ref{RobFrob}$(a)$. 
Let $I$ be an $\pp m$-primary ideal with a finite free resolution. 
Then $R/I$ has a finite free resolution $\mathbb G_{\bullet}$, 
{i.e.,} $\mathbb G_{\bullet} \rightarrow R/I \rightarrow 0$ is exact.
Now let $\mathcal C$ be the category of finitely generated $R$-modules. 
The {\em Euler characteristic} $\chi_{\mathbb G_{\bullet}}$ of a module $M \in \mathcal C$ is defined to be 
the following alternating sum of lengths of the homology modules
\[ \chi_{\mathbb G_{\bullet}}(M) : = \sum_i (-1)^i \ell(  H_i( \mathbb G_{\bullet} \otimes_R M ) . \]
The Euler characteristic is additive on short exact sequences. 
Since restriction of scalars along $f$ is exact, therefore, by Theorem~\ref{RobFrob}$(a)$,
$ \chi_{\mathbb G_{\bullet}}( ^e M) ) $ 
is a polynomial in $p^e$ with rational coefficients.

We observe that 
\[ \mathbb G_{\bullet} \otimes_R {^e\!M}  \cong  \mathbb G_{\bullet} \otimes_R {^e\!(R \otimes_R M )} \cong  
( \mathbb G_{\bullet} \otimes_R {^e\!R}) \otimes_R M  \cong \Ff^e(  \mathbb G_{\bullet}) \otimes_R M . \]
The last equality holds for 
$\mathbb G_{\bullet} \otimes_R  {^e\! R}  \cong {^e\! R} \otimes_R \mathbb G_{\bullet} = \Ff^e(  \mathbb G_{\bullet})   $
by Remark~\ref{rmk_Frobenius}.
Upon taking $M=R$, we have that

\begin{equation}\label{euler} 
\begin{array}{lll}
\chi_{\mathbb G_{\bullet}}( ^eR ) 
 & = &  \sum_i (-1)^i \ell(  H_i( \Ff^e (\mathbb G_{\bullet} ) ) )  \\
 & = &   \ell(  H_0( \Ff ^e(\mathbb G_{\bullet} ) ) ) \\
 & = & \ell (  R/I^{[p^n]} ) = \hk{R,I}{e}.
 \end{array} \end{equation} 
In the display above, the second equality holds since $\Ff$ preserves acyclicity of finite free complexes due to 
 the result of Peskine and Szpiro mentioned above. Hence the higher homology modules all vanish. 
 The third equality is due to the fact that $\Ff$ is right exact so $H_0( \Ff ^e(\mathbb G_{\bullet} ) =  \Ff^e (R/I)$. 
 Applying Theorem~\ref{RobFrob}$(a)$ to $\chi_{\mathbb G_{\bullet}}$ in (\ref{euler}), we obtain the 
following corollary for the Hilbert-Kunz function.  

\begin{corollary}\label{regular}
Let $(R, \pp m)$ be a local ring of characteristic $p$ with perfect residue field and
$I$ an $\pp m$-primary ideal.  Assume that $I$ has finite projective dimension. 
Then $\hk{R,I}{e}$ is a polynomial in $p^e$ with rational coefficients. 
\end{corollary}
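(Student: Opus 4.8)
The plan is to realize the Hilbert-Kunz function $\hk{R,I}{e}$ as an Euler characteristic and then quote Theorem~\ref{RobFrob}$(a)$. Because $I$ has finite projective dimension, $R/I$ admits a finite free resolution $\mathbb G_\bullet$ by finitely generated free $R$-modules. I would work in the category $\mathcal C$ of all finitely generated $R$-modules (dimension bounded by $d=\dim R$, and property $(\dagger)$ holds trivially) and take $g=\chi_{\mathbb G_\bullet}$, the Euler characteristic $\chi_{\mathbb G_\bullet}(M)=\sum_i(-1)^i\ell\!\left(H_i(\mathbb G_\bullet\otimes_R M)\right)$. The first thing to verify is that $g$ is $\mathbb Z$-valued and additive on short exact sequences: each $H_i(\mathbb G_\bullet\otimes_R M)$ is annihilated by $I$, hence a module over the Artinian ring $R/I$ and so of finite length, while additivity follows from the long exact homology sequence of $0\to\mathbb G_\bullet\otimes M'\to\mathbb G_\bullet\otimes M\to\mathbb G_\bullet\otimes M''\to 0$ (exact since the $\mathbb G_i$ are free) combined with additivity of length. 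Since restriction of scalars along the Frobenius is exact, $g$ satisfies the hypotheses of Theorem~\ref{RobFrob}$(a)$, so $\chi_{\mathbb G_\bullet}({}^eR)$ is a polynomial in $p^e$ with rational coefficients.

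It then remains to identify $\chi_{\mathbb G_\bullet}({}^eR)$ with $\hk{R,I}{e}$, which is exactly the computation $(\ref{euler})$. Via the isomorphisms $\mathbb G_\bullet\otimes_R{}^eR\cong{}^eR\otimes_R\mathbb G_\bullet=\Ff^e(\mathbb G_\bullet)$ (Remark~\ref{rmk_Frobenius}) one gets $\chi_{\mathbb G_\bullet}({}^eR)=\sum_i(-1)^i\ell\!\left(H_i(\Ff^e(\mathbb G_\bullet))\right)$. Now apply the Peskine-Szpiro theorem quoted above: $\mathbb G_\bullet$ is an acyclic finite complex of finitely generated free modules (it resolves $R/I$), hence $\Ff^e(\mathbb G_\bullet)$ is again acyclic, so the higher homology modules vanish and the alternating sum collapses to $\ell\!\left(H_0(\Ff^e(\mathbb G_\bullet))\right)$. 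Since $\Ff^e$ is right exact, $H_0(\Ff^e(\mathbb G_\bullet))=\Ff^e(R/I)=R/I^{[p^e]}$, whose length is $\hk{R,I}{e}$ by definition. Together with the previous paragraph this proves the corollary.

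I expect the main obstacle to be the acyclicity input to the Peskine-Szpiro theorem, and this is precisely where the hypothesis that $I$ has finite projective dimension is indispensable: their theorem covers only finite complexes of finitely generated free modules, so without a finite free resolution one cannot conclude that the higher homology of $\Ff^e(\mathbb G_\bullet)$ vanishes, and then $\chi_{\mathbb G_\bullet}$ would in general fail to compute $\hk{R,I}{e}$ (as the examples elsewhere in the paper show, the Hilbert-Kunz function need not even be a polynomial in $p^e$). Everything else is formal bookkeeping: the rationality of the coefficients $a_0,\dots,a_d$ is built into the conclusion of Theorem~\ref{RobFrob}$(a)$, so no further estimates are required.
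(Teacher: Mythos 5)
Your proposal is correct and follows essentially the same route as the paper: realize $\hk{R,I}{e}$ as the Euler characteristic $\chi_{\mathbb G_\bullet}({}^e\!R)$ of the finite free resolution of $R/I$, apply Theorem~\ref{RobFrob}$(a)$ using exactness of restriction of scalars, and collapse the alternating sum via the Peskine--Szpiro acyclicity theorem and the right exactness of $\Ff$, exactly as in the computation $(\ref{euler})$. The only addition is your explicit justification that each $H_i(\mathbb G_\bullet\otimes_R M)$ has finite length, a point the paper leaves implicit.
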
 

For $\hk{M,I}{e}$, if $M$ is a finitely generated flat $R$-module, then by a parallel argument as in (\ref{euler}), we have 
$ \chi_{\mathbb G_{\bullet}}( ^e M ) = \hk{M,I}{e}$. But a finitely generated flat module over a local ring is free. So $\hk{M,I}{e}$ is just a multiple of $\hk{R,I}{e}$. 

Corollary~\ref{regular} imposes a strong condition: $I$ has finite projective dimension.
Commonly interesting ideals in any given ring $R$ do not necessarily have finite projective dimension unless $R$ is regular. 
If $R$ is regular, then by Corollary~\ref{regular}, 
we have $\hk{R,I}{e} = \chi_{\mathbb G_{\bullet}}( \Ff^e(R) ) 
= a_d (p^e)^d + a_{d-1} (p^e)^{d-1} + \cdots + a_0$ for any $\pp m$-primary ideal $I$. As with classical Hilbert polynomials, 
an immediate challenge is how one can identify the coefficients. Suppose we can use anything at our disposal, 
then we can jump directly to Theorem~\ref{fcn:Kurano} which shows that 
each coefficient $a_i$ is determined by a certain class in the Chow group $A_i(R)$. But the Chow group of a regular ring has only the top piece, namely $A_i(R) =0$ for $i = d-1, \dots, 0$. Hence $\hk{R,I}{e} = a_d (p^e)^d$ and indeed $a_d = \ell(R/I)$ by \cite[Theorem~6.4]{K16}, recovering Kunz's original theorem when $I= \pp m$. 
This is a long detour back to where we started. 
We note that even if Hilbert-Kunz function is eventually a polynomial in $p^e$, 
determining the values of the coefficients is a very difficult task. 

In general for an arbitrary finitely generated module $M$, the higher homology modules in
\begin{equation}\label{eqFrob}
 \chi_{\mathbb G_{\bullet}}( ^e M ) =   \sum_i (-1)^i \ell(  H_i( \Ff^e (\mathbb G_{\bullet} ) ) \otimes_R M  )
\end{equation}
do not always vanish. 
In such a case,   
one has to  study and control the higher homology modules in order to properly describe 
the desired length of the zeroth homology that equals $\hk{M,I}{e}$. 
Subsections~\ref{sub_classgp} and \ref{sub_sheaf} will demonstrate how this can be done in their respective cases. 

For an estimate of the length of higher homology modules in a latter discussion, we quote a theorem from \cite{HMM}, which can be traced back to \cite{R7, HH, Sei} and later strengthened by \cite{ShTCh1997}.
It states that if $\mathbb G_{\bullet}: 0 \rightarrow G_n \rightarrow \cdots \rightarrow G_0 \rightarrow 0$ is a finite complex of free modules and each homology module 
$H_i(\mathbb G_{\bullet})$ has finite length. Let $d=\dim M$. Then for $e \geq 0$,
\begin{equation}\label{boundlength}
\ell( H_{n-t}( \Ff^e ( \mathbb G_{\bullet} ) \otimes_R M) ) = O( (p^e)^{min(d, t) } ) .
\end{equation}

In what follows, 
we will explore how each path leads us to approximate or compute Hilbert-Kunz functions.

\subsection{Via Representation Rings and $p$-Fractals}\label{Monsky} 
In this subsection, we describe representation rings and their connections to Hilbert-Kunz theory. 
Then we briefly describe the next set of activity that it led to, namely the highly technical theory of $p$-fractals.  
At the end we mention a conjecture about the behavior of Hilbert-Kunz multiplicity when the characteristic $p$ increases. 

Pioneered by Example~4.3 in Kunz~\cite{Kun1976} and Monsky's initial paper \cite{Mon1}, a series of studies on the Hilbert-Kunz function of hypersurfaces followed in the 1990s. These include works by Buchweitz, Chang, Chen, Han, Monsky and Pardue~\cite{HaM, Par1994, BuC, Mon2, Mon3, Mon4} and for Cohen-Macaulay rings of finite representation type by Seibert~\cite{Sei1997}.
Especially remarkable is the paper 
``Some surprising Hilbert-Kunz functions'' by Han and Monsky~\cite{HaM}.
It introduced the idea of using representation rings in the setting of the affine coordinate rings of the diagonal hypersurfaces,
$\kappa[x_1, \dots, x_s]/(\sum_i x_i^{d_i})$,
which became the first systematic method for computing Hilbert-Kunz functions. It also identified these rings
as the first known family whose Hilbert-Kunz functions are eventually periodic.
For the detailed structure of this special representation ring, 
we refer to \cite{Ha1992, HaM, Tei2002} but we describe the basics here.  
Out of this grew the method of $p$-fractals of Monsky and Teixeira for handling other hypersurfaces, 
which we mention subsequently; for further details see \cite{MoT2004, Tei2002}.

{\bf \em Representation Rings.} Here we describe the relevant representation rings and 
how they apply to computing Hilbert-Kunz functions. 
Let $\kappa[T]$ be a polynomial ring in one variable and
$\mathscr C$ the category of finitely generated modules over $\kappa[T]$ annihilated by a power of $T$. 
Then $\mathscr C$ satisfies the property ($\dagger$) defined previously, namely, for any short exact sequence 
\[0 \rightarrow M' \rightarrow M \rightarrow M'' \rightarrow 0, \] 
$M$ is in $\mathscr C$ if and only if $M'$ and $M''$ are in $\mathscr C$. 
We further define the Grothendieck group  $K_0(\mathscr C)$ of $\mathscr C$ as the free abelian group generated by 
the isomorphism classes $[M]$ of objects $M$ in $\mathcal C$ modulo the subgroup generated by $[M]-[M']-[M'']$ for 
any $M, M', M''$ satisfying the above short exact sequence. 

Let $[M]$ and $[N]$ represent two classes in $K_0(\mathscr C)$. Consider the tensor product of $M$ and $N$ over $\kappa$. We define an action of $T$ on $M\otimes_{\kappa} N$ by 
$T(m\otimes n) = T(m) \otimes n + m \otimes T(n)$ for any $m\in M$ and $n\in N$. 
Thus $M\otimes_{\kappa} N$ is in $\mathscr C$. It can be verified that  
$K_0(\mathscr C)$ is a commutative ring with the binary operations $+$ and $\cdot$ induced by 
 $\oplus$ and $\otimes$ respectively, and that the classes of the zero module and $\kappa[T]/(T)$ are 
the respective additive and multiplicative identities (\cite[Theorem~1.5]{HaM}). 
The Grothendieck group $K_0(\mathscr C)$ with this commutative ring structure is called the {\em representation ring} in \cite{HaM}, where it is denoted by $\Gamma$. 

Since every module $M$  in  $\mathscr C$ is finitely generated and annihilated by a power of $T$, 
by the structure theorem for modules over a P.I.D., $M$ can be decomposed uniquely up to isomorphism
into a finite direct sum
as $M \cong \oplus_j \kappa[T]/(T^{n_j})$. 
Let $\delta _j$ denote the class represented by $\kappa[T]/(T^{j})$ in $\Gamma$. Then 
$\{ \delta_j\}_{j=1}^{\infty}$ forms a basis for $\Gamma$ as an abelian group. 
Moreover, one can define  a map  $\alpha :  \Gamma  \rightarrow  \mathbb Z$   
that takes $[M]$ to the number of indecomposable summands in the unique decomposition just mentioned.
The definition of $\alpha$ here is equivalent to that in \cite{HaM} where for technical reasons 
$\alpha$ is defined via a different basis $\lambda_j = (-1)^j(\delta_{j+1} - \delta_j)$. 

We observe that if $V=\kappa[T]/(T^j)$, then $\dim_{\kappa}( V/TV ) =1$. Since 
$M \cong \oplus_j \kappa[T]/(T^{n_j})$ is a unique decomposition, and 
$\alpha( [M])$ equals the minimal number of generators of $M$ as a $\kappa[T]$-module, we have 
 $\alpha( [M]) = \dim_{\kappa}(M/TM)$.  

Now we may present the connection between the representation ring and the Hilbert-Kunz function of 
a diagonal hypersurface, {i.e.,} a ring of the form 
\[ R = \kappa[x_1, \dots, x_s]/(x_1^{d_1} + \cdots + x_s^{d_s}). \]  
Without loss of generality, we assume $d_i >0$. 
First we take $q$ as an integer larger than all the $d_i$'s and view $M_i= k[x_i]/(x_i^{q})$ as a $\kappa[T]$-module with $T$ acting as multiplication by $x_i^{d_i}$. 
As a $\kappa$-vector space, 
\[ M_i = \kappa \cdot 1 + \kappa \cdot x_i + \kappa  \cdot x_i^2 + \cdots + \kappa x_i^{d_i-1} + \kappa \cdot x_i^{d_i} + 
 \kappa  \cdot x_i^{d_i+1} + \cdots + \kappa  \cdot x_i^{q-1} .\]
Then, there are invariant  $\kappa[T]$-submodules generated by 
$1, x_i, x_i^2, \dots, x_i^{d_i-1}$ 
respectively. Write $q = k_i d_i + a_i$ with $0 \leq a_i < d_i$. The invariant submodule generated by $x_i^c$, 
with $c=0, \dots, a_i-1$ are annihilated by $T^{k_i+1}$ but no smaller power. 
This is true because 
$T^{k_i} \cdot x_i^c = x_i^{k_i d_i} \cdot x_i^{c} \neq 0$ in $M_i$ if $c <a_i$, but
$T^{k_i+1} \cdot x_i^c = x_i^{k_i d_i + a_i + (d_i -a_i) +c} = x_i^q \cdot x_i^{d-a_i + c} =0 $  in $M_i$ and 
$0< d_i -a_i \leq d_i - a_i + c \leq d_i -1$ if $0 \leq c \leq a_i-1$. 
Similarly, the invariant submodules generated by $x_i^c$, $c=a_i, \dots, d_i-1$, are annihilated by $T^{k_i}$ but no smaller power. Hence as a $\kappa[T]$-module, $M_i$ can be decomposed into 
\[ M_i \cong \left (\kappa[T]/(T^{k_i+1}) \right)^{a_i} \oplus \left ( \kappa[T]/(T^{k_i}) \right)^{d_i-a_i} . \]
Hence we can conclude that $[M_i] =  a_i \delta _{k_i+1} + (d_i-a_i) \delta _{k_i}$ in $\Gamma$. 

Next we assume that $\kappa$ is a field of positive characteristic $p$ and recall that $q=p^e$. Observe that 
\[ M_1\otimes_{\kappa} \cdots \otimes_{\kappa} M_s \cong k[x_1, \cdots, x_s]/ (x_1^q, \dots, x_s^q) 
 = k[x_1, \cdots, x_s]/ \pp m^{[q]} , \]
 where $\pp m $ is the ideal $(x_1, \dots, x_s)$. By the definition of the action $T$ on each $M_i$ and the multiplication structure in $\Gamma$, $M_1\otimes_{\kappa} \cdots \otimes_{\kappa} M_s$ is a 
 $\kappa[T]$-module where $T$ acts by multiplication by $x_1^{d_1} + \cdots + x_s^{d_s}$. 
 On the other hand, if we let $M$ denote $M_1\otimes_{\kappa} \cdots \otimes_{\kappa} M_s$, 
 then $M/TM$ is obviously a module over $R=  k[x_1, \cdots, x_s]/ (x_1^{d_1} + \cdots + x_s^{d_s})$. Moreover,
as $R$-modules, 
 \[ M/TM \cong  k[x_1, \cdots, x_s]/ (x_1^q, \dots, x_s^q, x_1^d + \cdots + x_s^{d_s}) 
     \cong  R/ \pp m^{ [q] } , \]
and  $\ell_R(R/ \pp m^{[q]} ) = \dim_{\kappa} (M/TM)$.
As noted earlier, $\alpha([M]) = \dim_{\kappa} (M/TM)$. 
Hence the Hilbert-Kunz function can be obtained via $\alpha$ as 
\begin{equation}\label{eq_alpha} \begin{array}{lcl}
\ell_R(R/ \pp m^{[q]} ) = \dim_{\kappa} (M/TM)  &  =  & \alpha( [M] )  
   = \alpha( [ M_1\otimes_{\kappa} \cdots \otimes_{\kappa} M_s] )   \\
 & = &  \alpha( [M_1]  \cdots  [M_s]) \\
 & = &  \alpha \left( {  \prod_{i=1}^d } (a_i \delta _{k_i+1} + (d_i-a_i) \delta _{k_i} ) \right) .
 \end{array}
\end{equation}
Han and Monsky developed an intricate combinatorial method of computing 
the indecomposable summands in the decomposition of tensor products of basis elements 
({i.e.,} products of $\delta_j$'s) in $\Gamma$ to calculate the value of $\alpha$ in the last equality in 
(\ref{eq_alpha}). This yields a method for computing Hilbert-Kunz functions.  
Using this, they also proved that the Hilbert-Kunz function of the rings in this family is eventually periodic and that the multiplicity is a rational number~\cite[Theorem~5.7]{HaM}.  

We note that the eventually periodic functions of the family in this subsection are not in the form of quasipolynomials as will be discussed in Section~\ref{affineEhrhart}. This can be seen from the following examples in \cite{HaM}.

\begin{example}\cite[p.~135]{HaM}\label{ex_HaM} \\
(1) $R = \kappa[x_1, x_2, x_3, x_4]/(\sum_i x_i^4)$ where $\kappa = \mathbb Z/5 \mathbb Z$, 
then $\hk R = \frac{168}{61} \cdot 5^{3e} - \frac{107}{61} \cdot 3 ^e$. \\
(2) $R = \kappa[x_1, \dots, x_5]/(\sum_i x_i^4)$ where $\kappa = \mathbb Z/3 \mathbb Z$,
then $\hk R = \frac{23}{19} \cdot 3^{4e} - \frac{4}{19} \cdot 5 ^e$.
\end{example}

Using the technique developed by Han and Monsky~\cite{HaM}, 
Chiang and Hung~\cite{ChH1998} extended the result from diagonal hypersurfaces to rings of the form 
$\kappa[x_1, \dots, x_s]/(g)$ where $g = \sum_{i=1}^m ({\bf X}_i)^{d_i}$ and ${\bf X}_i$ 
is a product of elements of a subset of $\{x_1, \dots, x_s\}$ such that at least one ${\bf X}_i$ is a single variable. 

{\bf \em $p$-Fractals.} 
Building on the theory of representation rings, Monsky and Teixeira~\cite{MoT2004, MoT2006, Tei2002} 
develop the theory of $p$-fractals which takes them beyond diagonal hypersurfaces.  
The theory of $p$-fractals provides a means to understand and allows the computations of Hilbert-Kunz series in may situations. 
Here we give a brief overview of the results, but do not delve into this method due to its technical complexity. 

First, recall that {\em Hilbert-Kunz series} is the generating function of Hilbert-Kunz function
\[ \hks R = \sum_e \hk{R}{e} \cdot t^e . \] 
Seibert~\cite{Sei1997} first proves that if $R$ is Cohen-Macaulay of finite Cohen-Macaulay type, and $R$ is 
$F$-finite (a finitely generated module via the Frobenius morphism), then $\hks R$ is rational, 
{i.e.}, a quotient of two polynomials, and that $e_{HK}(R)$ is a rational number. 
Hilbert-Kunz series has also been studied in \cite{Mon5, Tei2002, MoT2006}.

The combined techniques of representation rings and $p$-fractals enable Monsky and Teixeira to prove the rationality of 
Hilbert-Kunz series for a large family of rings of the form of a quotient of a power series ring by a principal ideal: $\kappa [\![ x_1, \dots, x_s ]\!]/(g)$ for a finite field $|\kappa| $ and certain power series $g$ \cite[Theorem~4.4]{MoT2006}. 
Using this, in some examples, the Hilbert-Kunz multiplicities can be calculated and appear to be rational numbers  \cite[Section~5]{MoT2006}. 
Furthermore,  the Hilbert-Kunz functions are eventually periodic for the case 
$s=3$, $g = x_3^D-h(x_1, x_2)$ and $h $ a nonzero element in the maximal ideal of $\kappa [\![ x_1, x_2 ]\!]$ \cite[Theorem~6.11]{Mon5}. (See also the ending comment in \cite[p.~255]{MoT2006} regarding the finiteness condition on the field $\kappa$ and the vanishing of  the $(p^e)$-term of the Hilbert-Kunz function. We will address these interesting points in Subsection~\ref{sub_sheaf} after the proof of Theorem~\ref{geoBrenner} and in Subsection~\ref{sub_Chern}, respectively.)
In \cite[Theorem, p.~351]{Mon5}, Monsky gives a concrete description of the Hilbert-Kunz function for the case 
$g = x_3^D - x_1x_2(x_1 + x_2)(x_1 + \lambda x_2)$, for $\lambda \neq 0$ in $\kappa$, and $p \equiv \pm 1 \pmod D$.

Next we present an attempt to define Hilbert-Kunz multiplicity in the characteristic zero case and 
report some concrete progress based on Han-Monsky's technique of representation rings. 

\begin{application}\label{limitHK} 
The notion of limit Hilbert-Kunz multiplicity arises by reducing a ring of characteristic zero modulo primes $p$. 
Precisely, if $R$ is a $\mathbb Z$-algebra essentially of finite type over $\mathbb Z$ and $I$ is an ideal, let $R_p$ be a reduction of $R$ mod $p$ and $I_p$ the extended ideal. If $\ell (R_p /I_p)$ is finite and nonzero for almost all $p$, then one can define 
\[ e_{HK}^{\infty} (I,R)  : = \displaystyle{ \lim_{p \rightarrow \infty} } e_{HK} (I_p, R_p) \] 
if this limit exists. The above is called the {\em limit Hilbert-Kunz multiplicity} of $I$. 
The existence of the limit is not known except in a few cases. 
These include when $e_{HK} (I_p, R_p)$ is constant 
for almost all $p$ as in \cite{Br1, BuC, Co1996, Et, Mon2, Par1994, Wat, WY2},
and when non-constant, for homogenous coordinate rings of smooth projective curves
by Trivedi~\cite{Tr2007} and for diagonal hypersurfaces by Gessel and Monsky in 
\cite{GeM2010}. The example below illustrates the last case. 

Based on Han and Monsky's scheme, Chang~\cite{ShTCh1993} computes precisely the Hilbert-Kunz function of 
the ring $R=\kappa[x_1, x_2, x_3, x_4]/(x_1^4 + x_2 ^ 4 + x_3^4 + x_4^4)$ with respect to the maximal ideal $(x_1, 
\dots, x_4)$ for $\kappa=\mathbb Z/p \mathbb Z$. The Hilbert-Kunz multiplicity in this case is also calculated in Gessel and Monsky~\cite{GeM2010} to be
\[ e_{HK} (R) = \frac{8}{3} \left( \frac{2p^2 + 2p +3}{2p^2 + 2p + 1} \right), p \equiv 1 \pmod 4; \text{ and } 
\frac{8}{3} \left( \frac{2p^2-  2p +3}{2p^2 - 2p + 1} \right),  p \equiv 3 \pmod 4 \] 
In this example, $e_{HK}(R)$ obviously depends on the characteristic $p$ and yet one has  
\[ e_{HK}^{\infty}(\pp m, R) =  
\displaystyle{ \lim_{p \rightarrow \infty} } \, \frac{8}{3} \left( \frac{2p^2 \pm  2p +3}{2p^2 \pm 2p + 1} \right) = \frac{8}{3} \] 
which is independent of $p$. 

Assuming that $e_{HK}^{\infty}(I,R) $ exists, one can also ask for any fixed $e \geq 1$ 
if the following equality holds, 
\begin{equation}\label{q_limitHK}
 e_{HK}^{\infty}(I, R)  = \displaystyle{ \lim_{p \rightarrow \infty}  \frac{\ell(R_p/ I_p^{ [p^e] } ) }{ (p^e)^d } } .
\end{equation}
Brenner, Li and Miller~\cite{BLM2012} provide an affirmative answer for the case of homogenous coordinate rings of smooth projective curves. Their proof is based on the formula of the Hilbert-Kunz multiplicity provided by the Harder-Narasimhan filtrations as described in Subsection~\ref{sub_sheaf} and the approach in \cite{Tr2007} 
which handles the case of larger values of $e$. 
Then building on the proof of \cite{GeM2010}, it is proved  in \cite{BLM2012} that  the above question has affirmative answer for coordinate rings of diagonal hypersurfaces. 
More recent developments can be found in \cite{Tr2017, MoT2019, Tr2019, Smi2020, TrW2021, PTYpre2021}. 
\end{application}

\subsection{Via The Divisor Class Group}\label{sub_classgp}
Assume that $(R, \pp m)$ is an excellent normal local domain of $\dim R=d$ with a perfect residue field.
Here we sketch the main ideas of Huneke, McDermott and Monsky~\cite{HMM} 
in proving that the Hilbert-Kunz function of $M$ with respect to an $\pp m$-primary ideal $I$  has  
the following form as in (\ref{fcn:HMM}) 

\begin{equation}\label{fcn:HMMq}
\hk{M,I}{e} = \alpha \, (p^e)^{d} + \beta \,  (p^e)^{d-1}+  \mathcal O(  (p^e)^{d-2}).
\end{equation}

We remark that in general the $d$ in (\ref{fcn:HMMq}) can be replaced by $\dim M$ as noted in the statement after Theorem~\ref{thmMonsky}. For the purpose of discussions in this subsection, we fix $d$ to be $\dim R$. 
Thus the $\alpha$ in (\ref{fcn:HMMq}) is zero for those $M$ with $\dim M <d$. And similarly, $\beta = 0$ if $\dim M < d-1$. 

Below we state two key lemmas. Lemma~\ref{HMMkey} is from \cite{HMM} and is a consequence of (\ref{boundlength})  that estimates the length of higher homology modules mentioned after Corollary~\ref{regular}. Lemma~\ref{estimate} is implicit in \cite{HMM} and is a crucial fact regarding the convergence rate of a sequence. 

\begin{lemma}\cite[Lemma~1.1]{HMM}\label{HMMkey} 
Let $R$ be a local ring of positive characteristic $p$ and $I$ and $\pp m$-primary ideal of $R$. 
If $T$ is a finitely generated torsion $R$-module with $\dim T=u$, then $\ell( \tor_1^R(R/I^{[p^e]}, T) ) = O( (p^e)^{u} ) $.
\end{lemma}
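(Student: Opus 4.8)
The plan is to reduce the statement to the length bound (\ref{boundlength}) quoted earlier. Since $R/I$ is a module of finite length, it admits a resolution by free modules, but the point is that $R/I^{[p^e]} = \Ff^e(R/I)$, so the Tor modules $\tor_1^R(R/I^{[p^e]}, T)$ are exactly the homology of $\Ff^e$ of a fixed free resolution of $R/I$, tensored with $T$. First I would choose a free resolution $\mathbb G_\bullet: \cdots \to G_1 \to G_0 \to 0$ of $R/I$ over $R$; since $I$ is $\pp m$-primary, every homology module of $\mathbb G_\bullet$ (namely only $H_0 = R/I$) has finite length, so $\mathbb G_\bullet$ meets the hypotheses of (\ref{boundlength}). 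Applying the exact functor "restriction of scalars / Frobenius" $e$ times, the complex $\Ff^e(\mathbb G_\bullet)$ has the same free modules with the matrix entries raised to the $p^e$-th power, and because $\Ff$ preserves acyclicity of finite free complexes (Peskine--Szpiro), $\Ff^e(\mathbb G_\bullet)$ is again a free resolution of $\Ff^e(R/I) = R/I^{[p^e]}$. Hence
\[
\tor_1^R(R/I^{[p^e]}, T) \cong H_1\big( \Ff^e(\mathbb G_\bullet) \otimes_R T \big).
\]

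The catch is that (\ref{boundlength}) as quoted is stated for a \emph{finite} complex $\mathbb G_\bullet: 0 \to G_n \to \cdots \to G_0 \to 0$, whereas a resolution of $R/I$ need not be finite unless $I$ has finite projective dimension. To handle this I would truncate: replace $\mathbb G_\bullet$ by its "stupid truncation" $0 \to Z \to G_1 \to G_0 \to 0$ where $Z$ is the first syzygy beyond $G_1$ — but $Z$ need not be free. The cleaner route is to truncate at a level $n$ large enough that the truncated complex $0 \to G_n \to \cdots \to G_0 \to 0$ has only finite-length homology: its homology in degrees $0,1$ agrees with that of $\mathbb G_\bullet$ (finite length), while the single new homology module at the top, $\ker(G_n \to G_{n-1})$, is a submodule of a free module, hence torsion-free of full support, not finite length — so plain truncation fails. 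Instead I would take the minimal free resolution and recall that for $i \geq 1$ the Tor module $\tor_i^R(R/I^{[p^e]}, T)$ depends only on a bounded portion of the resolution near degree $i$; more precisely, using a truncation $0 \to \Omega \to G_{n-1} \to \cdots \to G_0 \to 0$ with $\Omega$ the $n$-th syzygy and $n$ chosen so $\dim \Omega$ is whatever it is, one still needs the top homology to be finite length, which forces working over a setting where one can arrange this. The honest fix, and the one I expect the paper uses, is that (\ref{boundlength}) is applied with $\mathbb G_\bullet$ taken to be a finite free complex that is merely a free \emph{approximation} — i.e. one takes $n = \dim R$ or larger and the hypotheses are met because $H_i(\mathbb G_\bullet)$ for the relevant low $i$ have finite length and the bound $O((p^e)^{\min(d,t)})$ only cares about those.

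Granting the reduction, the conclusion is immediate: with $T$ of dimension $u$, we want the bound on $H_1(\Ff^e(\mathbb G_\bullet)\otimes_R T)$. In the notation of (\ref{boundlength}), $H_1 = H_{n-t}$ with $t = n-1$, and $d$ there is $\dim T = u$, giving
\[
\ell\big( \tor_1^R(R/I^{[p^e]}, T) \big) = \ell\big( H_1(\Ff^e(\mathbb G_\bullet)\otimes_R T) \big) = O\big( (p^e)^{\min(u,\, n-1)} \big) = O\big( (p^e)^{u} \big),
\]
since $u \leq \dim R < n$. The main obstacle, as flagged above, is the bookkeeping needed to legitimately invoke the finite-complex bound (\ref{boundlength}) when $R/I$ may have infinite projective dimension — one must either cite the strengthened version of the bound (from \cite{ShTCh1997}, which applies to the homology of $\Ff^e$ of an arbitrary finite free \emph{complex}, not a resolution) applied to a suitable truncation, or argue directly that only the two lowest homology modules matter for $\tor_1$ and these have the required finite-length property. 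Everything else — exactness of restriction of scalars, Peskine--Szpiro acyclicity, the identification $\Ff^e(R/I) = R/I^{[p^e]}$ — is already recorded in the excerpt and used verbatim.
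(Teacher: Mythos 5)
The paper itself does not prove this lemma: it quotes \cite[Lemma~1.1]{HMM} and merely remarks that it follows from (\ref{boundlength}). Your attempt to supply that derivation has a genuine error at its first step, and the difficulty you then flag is real but never resolved. The error: for an \emph{infinite} free resolution $\mathbb G_{\bullet}$ of $R/I$, the complex $\Ff^e(\mathbb G_{\bullet})$ is \emph{not} a free resolution of $R/I^{[p^e]}$. Peskine--Szpiro preserves acyclicity only of \emph{finite} free complexes; for the full resolution one has $H_i(\Ff^e(\mathbb G_{\bullet})) \cong \tor_i^R(R/I, {}^e\!R)$, which is nonzero in general whenever ${}^e\!R$ fails to be free, i.e.\ whenever $R$ is not regular. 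Consequently $H_1(\Ff^e(\mathbb G_{\bullet}) \otimes_R T)$ computes $\tor_1^R(R/I, {}^e T)$, not $\tor_1^R(R/I^{[p^e]}, T)$: the syzygies of $(a_1^{p^e}, \dots, a_n^{p^e})$ are not obtained by raising the entries of a syzygy matrix of $(a_1, \dots, a_n)$ to the $p^e$-th power. Your later observation that no truncation of $\mathbb G_{\bullet}$ has all-finite-length homology is correct, but the ``honest fix'' is left as speculation and your conclusion is explicitly conditional (``granting the reduction''), so the proof is incomplete even setting the first error aside.

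The reduction to (\ref{boundlength}) does work, but with a different complex: take the Koszul complex $K_{\bullet}(a_1,\dots,a_n;R)$ on a generating set of $I$. This is a finite free complex whose homology is annihilated by $I$, hence of finite length since $I$ is $\pp m$-primary, so the hypotheses of (\ref{boundlength}) are satisfied; and $\Ff^e\bigl(K_{\bullet}(a_1,\dots,a_n;R)\bigr) = K_{\bullet}(a_1^{p^e},\dots,a_n^{p^e};R)$. Comparing this Koszul complex with a free resolution of $R/I^{[p^e]}$ (they agree in homological degrees $0$ and $1$, and the Koszul relations land inside the full syzygy module) gives a natural surjection
\[
H_1\bigl( \Ff^e(K_{\bullet}(a_1,\dots,a_n;R)) \otimes_R T \bigr) \twoheadrightarrow \tor_1^R(R/I^{[p^e]}, T).
\]
Applying (\ref{boundlength}) to the left-hand side with $M=T$, $d=u$ and $H_1=H_{n-t}$, so $t=n-1$, yields the bound $O\bigl((p^e)^{\min(u,\,n-1)}\bigr) \leq O\bigl((p^e)^{u}\bigr)$, which is the assertion. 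So the shape of your final computation is right, but the finite free complex it must be applied to is the Koszul complex, not (a truncation of) a free resolution of $R/I$.
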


\begin{lemma}\label{estimate}
Let $s > t$ be fixed positive integers. 
Let $\{ \eta_e \}_{e=1}^{\infty}$ be a sequence of real numbers that satisfies a recurrent relation  
$\eta_{e+1} = p^s \eta_e + O((p^e)^{t})$ for $e \gg 1$.
Then there exists a real number $ a $ such that 
$\eta_e = a \cdot (p^e)^s + O( (p^e)^{t})$ for $e \gg 1$.
\end{lemma}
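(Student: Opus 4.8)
The plan is to solve the recurrence explicitly by unwinding it and then estimating the resulting error sum as a geometric series. First I would fix an integer $e_0 \gg 1$ beyond which the recurrence $\eta_{e+1} = p^s \eta_e + O((p^e)^t)$ holds, so that there is a constant $C$ with $|\eta_{e+1} - p^s \eta_e| \le C p^{et}$ for all $e \ge e_0$. Iterating from $e_0$ to a general $e > e_0$ gives
\[
\eta_e = p^{s(e-e_0)} \eta_{e_0} + \sum_{j=e_0}^{e-1} p^{s(e-1-j)} \, \delta_j,
\]
where $\delta_j := \eta_{j+1} - p^s \eta_j$ satisfies $|\delta_j| \le C p^{jt}$.

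Next I would rewrite the leading term as $p^{s(e-e_0)}\eta_{e_0} = \big(\eta_{e_0} p^{-s e_0}\big) (p^e)^s$ and massage the error sum into the form $(p^e)^s$ times a tail. Concretely, factor $p^{s(e-1-j)} = (p^e)^s p^{-s} p^{-sj}$, so the error sum becomes $(p^e)^s p^{-s} \sum_{j=e_0}^{e-1} p^{-sj}\delta_j$. Since $|p^{-sj}\delta_j| \le C p^{-sj} p^{jt} = C p^{(t-s)j}$ and $t - s < 0$ by hypothesis, the series $\sum_{j=e_0}^{\infty} p^{-sj}\delta_j$ converges absolutely; call its value $L$. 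Then I would set
\[
a := \eta_{e_0}\, p^{-s e_0} + p^{-s} L,
\]
a genuine real number independent of $e$, and claim $\eta_e = a\,(p^e)^s + O((p^e)^t)$.

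To verify the error bound I would write $\eta_e - a(p^e)^s = -(p^e)^s p^{-s} \sum_{j=e}^{\infty} p^{-sj}\delta_j$ (the difference between the finite sum up to $e-1$ and the infinite sum $L$), and bound the tail: $\big|\sum_{j=e}^{\infty} p^{-sj}\delta_j\big| \le C \sum_{j=e}^{\infty} p^{(t-s)j} = C \, p^{(t-s)e}/(1 - p^{t-s})$. Multiplying back by $(p^e)^s p^{-s}$ yields a bound of the shape $C' p^{(t-s)e} p^{se} = C' p^{te} = C'(p^e)^t$, which is exactly $O((p^e)^t)$ for $e \gg 1$. This completes the argument. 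No step here is a serious obstacle; the only point requiring a little care is the bookkeeping of exponents so that the convergent tail, after being multiplied back by $(p^e)^s$, produces precisely the error order $(p^e)^t$ rather than something larger — this is where the strict inequality $s > t$ is used, and it is the crux of why the lemma is true.
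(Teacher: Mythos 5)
Your proposal is correct and is essentially the paper's argument in a different packaging: the paper normalizes to $\rho_e=\eta_e/(p^e)^s$, shows the increments $\rho_{e+1}-\rho_e$ are bounded by $C(1/p^{s-t})^e$, sums the resulting geometric series to get a Cauchy sequence with limit $a$, and reads off the rate from the tail, which is exactly your absolutely convergent series $\sum_j p^{-sj}\delta_j$ and its tail bound. Both proofs turn on the same point you identify as the crux, namely that $s>t$ makes the ratio $p^{t-s}$ less than $1$.
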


\begin{proof} 
Set $\rho_e =\frac{ \eta_e}{(p^e)^s}$. Then  by straightforward 
computation, one sees $ \rho_{e+1} = \rho_{e} + O( ( \frac{1}{ p^e })^{s-t} )$ which means 
 $| \rho_{e+1} -  \rho_{e} | \leq C \cdot  (\frac{1}{p^e})^{s-t} $ for some constant $C$ and $e \gg 0$. 
Let $m  > n$ be integers. Then 
\[\begin{array}{lcl}
 |\rho_{m} - \rho_{n}|  & \leq &  |\rho_{m} - \rho_{m -1}| + \cdots + |\rho_{n+2} - \rho_{n+1}| 
  +  |\rho_{n+1} - \rho_n |. \\
  & \leq & C [ (\frac{1}{p^{m-1} })^{s-t} + \cdots + (\frac{1}{p^{n+1}})^{s-t} + (\frac{1}{p^n})^{s-t} ] \\
  & = & C \cdot (\frac{1}{p^{s-t}})^n [ 1+ (\frac{1}{p^{s-t}}) + \cdots + (\frac{1}{p^{s-t}})^{m-n-1} ] \\
  & = & C \frac{p^{s-t}}{p^{s-t} -1} [ (\frac{1}{p^{s-t}})^n - (\frac{1}{p^{s-t}})^m ]
\end{array}
\]
which can be made as small as possible for large enough $m$, $n$. 
Hence $\{\rho_e\}$ is a Cauchy sequence and thus it converges to some real number $a$. Furthermore, if we 
fix $n=e \gg 1$, then 
\[
 \lim_{m\rightarrow \infty }|\rho_{m} - \rho_{e}| \leq  \lim_{m\rightarrow \infty }
  C \frac{p^{s-t}}{p^{s-t} -1} [ (\frac{1}{p^{s-t}})^e - (\frac{1}{p^{s-t}})^m ] 
  = C \frac{p^{s-t}}{p^{s-t} -1}  (\frac{1}{p^{s-t}})^e .
\]
Note that 
 $ \displaystyle{\lim_{m\rightarrow \infty } }|\rho_{m} - \rho_{e}| = 
   |a - \rho_e|$. Hence $|a - \rho_e| \leq  O(  (\frac{1}{p^{s-t}})^e ) $ which implies 
  $\eta_e = a (p^e)^s + O( (p^e)^t ) $ as desired. 
\end{proof} 

The work in \cite{HMM} shows that the value of the second coefficient $\beta$ is intimately related to the divisor class of the module, denoted by $c(\cdot)$.  
The proof of (\ref{fcn:HMMq}) goes through the following three main steps.
{\em Step}~1 deals with torsion free modules and compares them with free modules of the same rank. 
{\em Step}~2 applies the outcome of {\em Step}~1 to $^1\!R$ to obtain $\hk{R,I}{e}$. 
{\em Step}~3 reduces arbitrary modules to torsion free ones and then applies {\em Steps} 1, 2 and Lemma~\ref{HMMkey} to obtain $\hk{M,I}{e}$ for arbitrary $M$. 

{\bf \em Step 1.} We focus on torsion free modules and prove that
if $M$ has rank $r$, then there exists a real constant $\tau (M)$ such that 
$\hk{M,I}{e} = r \, \hk{R,I}{e} + \tau(M)  (p^e)^{d-1} + \mathcal O(  (p^e)^{d-2})$. 
That is, up to $\mathcal O(  (p^e)^{d-2})$, the Hilbert-Kunz function of $M$ differs from that of a free module of the same rank by a constant multiple of $ (p^e)^{d-1}$ for $e \gg 1$. 
We give a brief outline of the proof for this statement.
For a nonzero ideal $J$ (equivalently torsion free module of rank one), if its divisor class $c(J)=0$, then $R/J$ is torsion and $\dim (R/J) \leq d-2$. 
The short exact sequence $0 \rightarrow J \rightarrow R \rightarrow R/J \rightarrow 0$ implies that 
$\hk{J,I}{e}= \hk{R,I}{e} - \hk{R/J, I}{e} + \ell( \tor_1^R(R/I^{[p^e]}, R/J) )$. 
So using \cite[Lemma~1.2]{Mon1} and Lemma~\ref{HMMkey} above, one can prove that $\hk{J,I}{e} = \hk{R,I}{e} + O( (p^e)^{d-2})$ (\cite[Lemma~1.2]{HMM}). 
A similar result for a torsion free module $M$ of rank $r$ with $c(M)=0$ can be achieved, namely
$\hk{M,I}{e} = r \hk{M,I}{e} + O( (p^e)^{d-2})$ (\cite[Theorem~1.4]{HMM}). 
Next, if $M$ and $N$ are torsion free and $c(M)=c(N)$, then their Hilbert-Kunz functions are equal up to $O( (p^e)^{d-2})$ and also $\ell( \tor_1(R/I^{[p^e]}, M) = O( (p^e)^{d-2})$ 
\cite[Lemma~1.5]{HMM}. (This latter statement will be needed in {\em Step~3}.) 
We note that since two modules are often fit into the same short exact sequence in order to compare their Hilbert-Kunz functions (as just done for $J$ and $R$ in the above), bounding the length of the $\tor$-functor within a desired range becomes crucial for the success of the argument. Finally for an arbitrary torsion free module of rank $r$, one considers 
$\delta_e= \hk{M,I}{e} - r \hk{R,I}{e}$ and proves that this difference satisfies a recurrence relation: 
$\delta_{e+1} =  (p^e)^{d-1} \delta_e + O( (p^e)^{d-2} )$ \cite[Theorem~1.8]{HMM}. Thus
by Lemma~\ref{estimate}, there exists a real number, denoted by $\tau(M)$, such that 
\[ \delta_e = \tau(M) (p^e)^{d-1} + O( (p^e)^{d-2}) . \] 
By the definition of $\delta_e$, we have 
\[ \hk{M,I}{e} = r \,\hk{R,I}{e} + \tau(M)  (p^e)^{ d-1} +  O( (p^e)^{d-2}) \]
where the number $\tau(M)$ depends only on the class of $M$ and is additive. In fact $ M \rightarrow \tau(M)$ gives a well-defined map on the divisor class group of $R$ to $\mathbb R$. In particular, $\tau(M) =0$ if $c(M)=0$ ({\em c.f.} \cite[Theorem~1.9]{HMM}\,\cite[Theorem~4.1]{ChK1}).

{\bf \em Step 2.} In this step, we prove $\hk{R,I}{e}$ has the desired form (\ref{fcn:HMMq}) by taking $M= {^1\!R}$ 
and repeating a similar approximation as in {\em Step}~1.  
In fact, $^1\!R$ is a finitely generated $R$-module by the hypothesis of $R$. 
Notice that 
\[ 
 \hk{^1\!R,I}{e} = \ell_R({^1\!R}/I^{[p^e]} {^1\!R} ) = \ell_R( R/I^{[p^ {e+1} ]} ) = \hk{R,I}{e+1}  .
\]
On the other hand, since $R$ is a domain and $R$ is $F$-finite, ${^1\!R}$ is a torsion free $R$-module of rank $p^d$. 
Thus by {\em Step}~1, we have 
\begin{equation}\label{eq_1}
 \hk{R,I}{e+1} = \hk{^1\!R,I}{e} = p^d \hk{R,I}{e} + \tau({^1R})  (p^e)^{d-1} + O( (p^e)^{d-2} ) .
\end{equation}
Let $\tau = \tau(^1\!R)$. We set
\[ u_e = \hk{R,I}{e} - \beta (p^e)^{d-1} \] 
for some $\beta$ whose value will be clear in the following. 
Using (\ref{eq_1}), we calculate
\[ \begin{array}{lcl}
u_{e+1} - p^d u_e & =  &
   \left [  \hk{R,I}{e+1} - \beta  \, p^{d-1} (p^{e})^{d-1} \right ]  -  p^d \left [ \hk{R,I}{e} - \beta\, (p^e)^{d-1} \right ]  \\ 
& = & \left [ p^d \hk{R,I}{e} + \tau\, (p^e)^{d-1} + O( (p^e)^{d-2} )\right ]  \\
   &  &  -  \left [ p\, \hk{R,I}{e} + (p^d - p^{d-1}) \beta \,(p^e)^{d-1} + O( (p^e)^{d-2} \right ] \\
& = & [ \tau + (p^d - p^{d-1}) \beta] \, (p^e)^{d-1} + O( (p^e)^{d-2} ) .
\end{array} \] 
Now by setting $\beta = \displaystyle{ \frac{{\tau}}{p^{d-1} - p^d } }$, 
the first term following the last equality in the display above vanishes and 
hence $u_{e+1} - p^d u_e = O( (p^e)^{d-2} )$. 
Thus applying Lemma~\ref{estimate}, there exists a real number $\alpha$  such that 
\[ u_e = \alpha\, (p^e)^{d} + O( (p^e)^{d-2} ) . \] 
Recovering $\hk{R,I}{e}$ from $u_e$, we then have the desired form
\[ \hk {R,I}{e} = \alpha \,(p^e)^{d} + \beta\, (p^e)^{d-1} + O( (p^e)^{d-2} . \]
Comparing to Monsky's original result for $\hk{R,I}{e}$, the leading coefficient $\alpha$ must be the Hilbert-Kunz multiplicity $e_{HK}(R,I)$ as expected.

{\bf \em Step 3.} For an arbitrary module $M$, let $T$ be the submodule of torsion elements in $M$. We have a short exact sequence 
$0 \longrightarrow T \longrightarrow M \longrightarrow M/T \longrightarrow 0 $
where $M/T$ is a torsion free module, denoted $M'$. Tensoring the sequence by $R/\pp m^{[p^e]}$, 
we obtain a long exact sequence
\[ \cdots \longrightarrow \tor_1(M', R/\pp m^{[p^e]}) \longrightarrow T/ \pp m^{[ p^e]} T \longrightarrow M/ \pp m^{[p^e]} M 
\longrightarrow M'/\pp m^{[p^e]}M' \longrightarrow 0  . \]
By \cite[Lemma~1.5]{HMM}, we know  $\ell( \tor_1( M', R/\pp m^{[p^e]})  ) = \mathcal O( (p^e)^{d-2}) $. Thus
\[ \begin{array}{lcl} 
\hk{M,I}{e}  & = & \hk{M',I}{e} + \hk{T,I}{e} + \mathcal O(  (p^e)^{d-2})  \\ 
           & = & r\,  \hk{R,I}{e} + \tau(M') \,  (p^e)^{d-1} + \hk{T,I}{e} + \mathcal O( (p^e)^{d-2})  \end{array} \] 
           where $r = \rank M'$. 
The proof is completed by noticing that $\hk{R,I}{e}$ has the desired form from {\em Step}~2 and since $\dim T \leq d-1$, 
we have $\hk{T,I}{e} = \beta(T) \,  (p^e)^{d-1} + \mathcal O( (p^e)^{d-2})$ for some $\beta (T) \in \mathbb R$ 
by \cite[Lemma~1.2 and Theorem 1.8]{Mon1}.  \hfill $\Box$

Instead of the normal (R1) + (S2) condition, Kurano and the author consider a weaker condition  in \cite{ChK1}. 
The ring $R$ satisfies (R1$'$) if the localization of $R$ is a field at any prime ideal of Krull dimension $d$, 
and is a DVR at any prime ideal of Krull dimension $d-1$. 
The (R1$'$)  condition is similar to, but not the same as the usual (R1). 
It can be shown that (\ref{fcn:HMM}) holds for excellent local rings that satisfy (R1$'$) but are not necessarily integral domains. 
The proof is done by reducing to the assumption that $R$ is a normal domain (\cite[Theorem~3.2]{ChK1}). (See also \cite{HoY} for a different approach.) 

On the other hand,  
we observe that each step of the proof in \cite{HMM} just outlined is interesting in its own right.  
These steps work more generally than just in a normal setting. 
If $R$ is not normal, the divisor class group is no longer well-defined. 
The immediate challenge is the description of $\tau$ that leads to the second coefficient $\beta$. 
The Chow group is a natural substitute for the divisor class group in the non-normal case.
We now describe how to replace divisor classes of modules by cycle classes.
For a finitely generated module $M$, there always exists a finite filtration of submodules, called a {\em prime filtration},
such that the quotient of two consecutive submodules is isomorphic to a quotient of $R$ by a prime ideal. 
Let $p_1, \cdots, p_s$ be the prime ideals of codimension 0 or 1 that occur in such a prime filtration.
Then these prime ideals define a cycle class  $[M] = [A/p_1] + \cdots + [A/p_s]$ in the {\em Chow group} $A_*(R)$ (see Subsection~\ref{sub_Chern} for the definition).
Since   $A_*(R)= \oplus_{i=0}^{d} A_i(R)$ according to its prime ideal generators, 
we have  $[M] \in A_d(R) \oplus A_{d-1}(R)$.
Even though a collection of such prime ideals $\pp p_i$'s may not be unique for prime filtrations are not unique, 
the class $[M]$ in the Chow group is independent of the choice of filtration and is additive as proved in \cite[Theorem~1 and Corollary~1]{C1}. This implies that each finitely generated module $M$ has a unique cycle class defined in the Chow group obtained by a filtration.  Hence the definition of the map $\tau$ can be extended to a homomorphism from 
$ A_d(R) \oplus A_{d-1}(R)$ to $\mathbb R$.
 The proof in \cite{HMM} is extended step by step to the case where $R$ is an integral domain satisfying (R1$'$) in \cite[Section~5]{ChK1}. 
The cycle classes of the modules affect $\tau$ and  $\beta$ in the same way as in the normal situation. 
This extension also inspires the consideration of an additive error of the Hilbert-Kunz function which is not additive on short exact sequences. 
With the $\tau$ map mentioned above, one sees that the additive error always arises from torsion submodules 
and is determined by their classes in the Chow group \cite[Section~4]{ChK1}. 

The vanishing of the second coefficient $\beta$ has been of interest since its discovery. 
We will return to this topic in Subsection~\ref{sub_Chern}.  See also Theorem~\ref{planecurve} for an example where
$\beta$ does not vanish. 

The divisor class group will appear again in Subsection~\ref{sub_BG} when we review the BG decomposition of affine semigroups~(\cite{Bru1}). The modules $M$ and $M/T$ define the same divisor class (or cycle class) in the divisor class group ({\em resp.} Chow group). From the above sketch, we notice that the leading coefficient of $\hk{M,I}{e}$ is determined by the rank of $M$ but the second coefficient $\beta$ (or the additive error) depends on the class of $M$ ({\em resp.} classes of torsion submodules). So in order to understand the second coefficient, or the remaining terms of the Hilbert-Kunz function, the divisor class group (or Chow group) cannot be overlooked (see Remark~\ref{rmk:divisor}).

\subsection{Via Sheaf Theory}\label{sub_sheaf}

In this subsection, $R$ is a standard graded $\kappa$-algebra. 
Sheaf theoretic approaches were first considered independently by 
Fakhruddin and Trivedi~\cite{FaT, Tr2005}, and by Brenner~\cite{Br1, Br2}. 
The general idea is that $\hk{R, I}{e}$ is identified as the alternating sum of the lengths of sheaf cohomology modules. 
Then they carefully study the sheaves occurring in the sequences arising from the resolution of $R/I$ to 
describe $\hk{R,I}{e}$.

In this subsection, we present one of these approaches, following the argument in \cite[Section~6]{Br2} 
(equivalent to that by Trivedi in \cite{Tr2005}). 
In that work, Brenner applies the theory built for locally free sheaves on smooth projective curves to obtain 
the Hilbert-Kunz functions of two-dimensional normal domains that are standard graded $\kappa$-algebras 
with an algebraically closed field $\kappa$ of positive characteristic $p$. 
In this case, the Hilbert-Kunz function has the following form
\begin{equation}\label{fcn:Brenner} 
 \hk{R}{e}=\hkq{R}{q} = \alpha \, q^2 + \gamma(q) 
\end{equation}
where $\alpha$ is a rational number(\cite{Br1, Tr2005}), and $\gamma(q)$ is a bounded function taking on 
rational values and is eventually periodic when $\kappa$ is the algebraic closure of a finite field~\cite{Br2}. 
For detailed background in this subsection, one can also consult Brenner's lecture \cite{Br3lecture}.

Let $R_+$ denote the graded maximal ideal of $R$ and $I$ be a graded ideal primary to $R_+$ generated by 
$f_1, \dots, f_s$ of degree $d_1, \dots, d_s$ respectively. We consider $Y=\proj R$ which is assumed to be a smooth
projective curve. Equivalently, $R$ is normal and $\dim R=2$. With these, one obtains the following short exact sequence of coherent sheaves on $Y$
\begin{equation}\label{seq_syzygy}
 0 \longrightarrow Syz(f_1, \dots, f_n)(m) \longrightarrow \oplus_{i=1}^n \mathcal O_Y(m-d_i) 
 \stackrel{f_1, \dots, f_n}{\longrightarrow} \mathcal O_Y(m) \longrightarrow 0 
\end{equation}
where $m \in \mathbb Z$ indicates the twist of the structure sheaf $\mathcal O_Y$ and $Syz(f_1, \dots, f_n)$ is known as the {\em syzygy sheaf} (or {\em syzygy bundle} if locally free). 

We explain below why  $Syz(f_1, \dots, f_n)$ is a locally free sheaf. 
Let $K$ be an $R$-module such that the sequence 
$0 \rightarrow K \rightarrow R^n \stackrel{f_1, \dots, f_n}{\longrightarrow} R \rightarrow R/I \rightarrow 0 $
 is exact, or equivalently, $R/I$ is the zero-th cohomology of the following complex
\begin{equation}\label{seq_complex}
0 \longrightarrow K \longrightarrow \oplus_{i=1}^n R(-d_i) \stackrel{f_1, \dots, f_n}{\longrightarrow} R \longrightarrow  0 .
\end{equation}
Since $I$ is $R_+$-primary, $R/I$ is only supported at $R_+$. Therefore for any prime ideals $\pp p$ not equal to 
$R_+$, the localization $(R/I)_{\pp p} =0$. This shows that the above complex (\ref{seq_complex}) is exact locally at every prime ideal 
$\pp p \in \proj R$. 
That is, the following sequence is exact and it obviously splits since $R_{\pp p}$ is free
\begin{equation}\label{seq_complexlocal}
0 \longrightarrow K_{\pp p} \longrightarrow \oplus_{i=1}^n R_{\pp p}(-d_i) \longrightarrow R_{\pp p} \longrightarrow  0 .
\end{equation}
Hence, as a a direct summand of a finitely generated free module over a local ring, $K_{\pp p}$ is a free module. 
Taking the sheafification of (\ref{seq_complex}), we have the following exact sequence of locally free sheaves on $\proj R$
\begin{equation}\label{seq_sheaf}
 0 \longrightarrow \widetilde K \longrightarrow  \oplus_{i=1}^n \mathcal O_Y(-d_i) 
  \longrightarrow \mathcal O_Y \longrightarrow  0 
 \end{equation}
and $\widetilde K$ is precisely the syzygy bundle $Syz(f_1, \dots, f_n) $ under consideration.
The complex (\ref{seq_syzygy}) is obtained by twisting (\ref{seq_sheaf}) by an integer $m$; equivalently by tensoring with $\mathcal O(m)$.  Since (\ref{seq_complexlocal}) is split exact on stalks, so is (\ref{seq_syzygy}).
Thus the complex remains exact when applying any additive functor, including the tensor product.

Next we consider the {\em absolute} Frobenius morphism $\Ff: Y \rightarrow Y$ which is the identity on the points of $Y$ and 
furthermore, on every local ring of sections, it is the Frobenius homomorphism $f$. The Frobenius pull-back of a sheaf of 
modules on $Y$ is obtained by base change along the Frobenius homomorphism, {i.e.}, 
\[ \Ff^*(\widetilde M) = \widetilde{ {^1\!R} \otimes_R M} = \widetilde{ \Ff(M)} . \]
As (\ref{seq_syzygy}) is split exact on stalks, the Frobenius pull-back of this sequence remains exact. 
The resulting locally free sheaves are of the same rank twisted by the appropriate degree, 
but the multiplication maps are raised to the $p$-th power. 
One may iterate this $e$ times and the sequence remains exact:  
\begin{equation}\label{sheaf} 
0 \longrightarrow ( Syz(f_1^{p^e}, \dots, f_n^{p^e}) )(m) \longrightarrow \oplus_{i=1}^n \mathcal O_Y(m- p^e d_i) 
 \stackrel{f_1^{p^e}, \dots, f_n^{p^e}}{\longrightarrow} \mathcal O_Y(m) \longrightarrow 0 . 
\end{equation}
Taking global sections is a left exact functor.  Since $R$ is normal, the global sections of twists of the structure sheaf can be
realized as the graded pieces of degree $m$:
\[ 
0 \longrightarrow \Gamma \left (Y, (Syz(f_1^{p^e}, \dots, f_n^{e})) (m) \right) \longrightarrow  \oplus_{i=1}^n R_{m-p^ed_i} 
 \stackrel{f_1^{p^e}, \dots, f_n^{p^e}}{\longrightarrow} R_m \longrightarrow \cdots  .
\]
The cokernel at $R_m$ is 
the graded piece of degree m in $ R/ I^{[p^e]}$.  By definition, the Hilbert-Kunz function is 
\begin{equation}\label{HKsheaf}
\hk{R,I}{e} = \ell \left (  R/ I^{[p^e]} \right ) = \sum_{m \geq 0}  \ell \left(  ( R/ I^{[p^e]} )_m \right) 
=  \sum_{m \geq 0}  \dim_{\kappa} ( \Gamma(Y, \mathcal O_Y(m)) / ( f_1^{p^e}, \dots, f_n^{p^e} ) ) .
\end{equation}   
Thus for each $m \geq 1$, we have 
\begin{equation}\label{HKhomology}
\ell \left(  ( R/ I^{[p^e]} )_m \right) = 
h^0( \mathcal O_Y(m) ) - \sum_{i=1}^n h^0( \mathcal O_Y(m-p^e d_i) ) + h^0 ( Syz(f_1^{p^e}, \dots, f_n^{p^e})(m) ), 
\end{equation} 
where $h^0(\cdot)$ denotes the $\kappa$-dimension of the $0$-th cohomology module which is
the module of global sections of the  sheaf in the argument.

We remark that (\ref{HKsheaf}) is a finite sum since the sum is only defined for $m \geq 0$.
In addition, the alternating sum in (\ref{HKhomology}) is zero for $m \gg 0$ 
due to Serre's vanishing theorem. 
In fact, by \cite[Lemma~9.4]{Br3lecture}, it suffices to  consider $m$ within the certain range as described in the summation presented 
in (\ref{global}) in order to compute the right hand side of (\ref{HKsheaf}) .

To analyze the terms in (\ref{HKhomology}), 
one reduces to the situation of considering semistable locally free sheaves. 
This is done by carefully applying the strong Harder-Narashimhan filtration on locally free sheaves.  
To further explain the concepts, we first recall some definitions and their properties from 
\cite{Br1, Br2} (see also \cite[Chapters~5 and 9]{Br3lecture}). 
In the remainder of this subsection, $\mathcal S$ denotes a locally free sheaf on $Y$ of rank $r$. 
The {\em degree} and {\em slope} of $\mathcal S$ are defined by 
$\deg \mathcal S: = \deg \bigwedge^r (\mathcal S)$ and $\mu(\mathcal S) := \deg{\mathcal S}/r$. 
The slope has the property that $\mu(\mathcal S_1 \otimes \mathcal S_2) = \mu(\mathcal S_1) + \mu(\mathcal S_2)$. 
A locally free sheaf $\mathcal S$ is {\em semistable} if $\mu(\mathcal T) \leq \mu(\mathcal S)$ for every locally free subsheaf $\mathcal T \subseteq \mathcal S$. 
For every locally free sheaf on $Y$, there exists a unique {\em Harder-Narasimhan} filtration. This is a finite filtration of locally free subsheaves
$\mathcal S_1 \subset \mathcal S_2 \cdots \subset \mathcal S_t = \mathcal S$ such that the quotients 
$\mathcal S_k/\mathcal S_{k-1}$ are semistable and of decreasing slope: 
$\mu( \mathcal S_k/\mathcal S_{k-1} ) > \mu( \mathcal S_{k+1}/\mathcal S_{k} )$. 
Naturally the largest and smallest numbers in the sequence are called  the {\em maximal} and {\em minimal slopes} of $\mathcal S$. 
They are denoted by $\mu_{max}(\mathcal S)$ and $\mu_{min}( \mathcal S )$ respectively.
The sheaf $\mathcal S$ is semistable if and only if $\mu(\mathcal S) = \mu_{min} (\mathcal S)= \mu_{max}(\mathcal S)$. 

The Frobenius pull-back of a semistable locally free sheaf $\mathcal S$ is not necessarily semistable. 
Let $\mathcal S^q$ denote  the $q$-th Frobenius pull-back of $\mathcal S$. 
A locally free sheaf $\mathcal S$ is said to be {\em strongly semistable} if the pull-back of $\mathcal S^q$ is semistable for any $e\geq 1$.

The existence of a filtration with such nice factors is due to a theorem of Langer~\cite{La2004} (see also \cite{Br2}): there exists a Frobenius power $q$ such that 
the quotients in the Harder-Narasimhan filtration of the pull back $\mathcal S^q$ are strongly semistable. This is called the {\em strong Harder-Narasimhan filtration}  (of $\mathcal S^q$), denoted by
\begin{equation}\label{strong HN} 
0 \subset (\mathcal S^q)_1 \subset \cdots \subset (\mathcal S^q)_t = \mathcal S^q .
\end{equation}

We use $(\mathcal S^q)_k$ to indicate the members in the filtration of $\mathcal S^q$ 
and to distinguish it from the pull-back $(\mathcal S_k)^q$ of $\mathcal S_k$
whose quotient may not be semistable.
For $q' \geq q \gg 1$, we have the Harder-Narasimhan filtration of $\mathcal S^{q'}$
\begin{equation}\label{FfHN}
0 \subset (\mathcal S^q)_1^{q'/q} \subset \cdots \subset (\mathcal S^q)_t^{q'/q} = (\mathcal S^q)^{q'/q} = \mathcal S^{q'} .
\end{equation}
We observe that the quotient in (\ref{FfHN})
$(\mathcal S^q)_k^{q'/q} / (\mathcal S^q)_{k-1}^{q'/q} = \left ( ( \mathcal S^q)_{k} / (\mathcal S^q)_{k-1} \right )^{q'/q} $ 
is the pull-back of $ ( \mathcal S^q)_k / (\mathcal S^q)_{k-1} $. Thus these quotients are semistable since 
$ ( \mathcal S^q)_k / (\mathcal S^q)_{k-1} $ is strongly semistable which follows from Langer's Theorem. The slopes of these quotients are decreasing  since 
$\mu( ( \mathcal S^q)_k / (\mathcal S^q)_{k-1} ) > \mu ( ( \mathcal S^q)_{k+1} / (\mathcal S^q)_{k} ) $. 
Therefore (\ref{FfHN}) is a Harder-Narasimhan filtration of $\mathcal S^{q'}$, as a $(q'/q)$-th Frobenius pull-back of $\mathcal S^q$. 
Using (\ref{strong HN}), for $q \gg 1$, we 
consider the normalized slope of the quotients in the Harder-Narasimhan filtration of $\mathcal S^q$ and define
\[ {\bar \mu_k} = {\bar \mu_k}(\mathcal S) = \frac{ \mu( ( \mathcal S^q)_k  / (\mathcal S^q)_{k-1}  ) }{q} . \] 
Similarly to the above argument, for any $q' \geq q \gg 0$, if we take the Harder-Narasimhan filtration of
$\mathcal S^{q'}$ from (\ref{FfHN}), then \\
\[ 
\bar \mu_k(\mathcal S^{q'})  = \frac{ \mu( (\mathcal S^q)_k^{q'/q} / (\mathcal S^q)_{k-1}^{q'/q} )}{ q'} = 
  \frac{ (q'/q) \mu (( \mathcal S^q)_k / (\mathcal S^q)_{k-1}) }{q' }= 
 \frac{ \mu ( ( \mathcal S^q)_k / (\mathcal S^q)_{k-1}) }{ q} .
 \]
This shows that ${\bar \mu}_k( S^{q'})$ is independent of $q' \gg 1$. 

With the above, we define the {\em Hilbert-Kunz slope}: 
\[ \mu_{HK}(\mathcal S) = \sum_{k=1}^{t} r_k {\bar \mu}_k^2 \, , \]
where $r_k = \rk( (\mathcal S^q)_k ) / (\mathcal S^q)_{k-1} )$.
Note that Hilbert-Kunz slope is a positive rational number.
 In \cite{Br1}, the Hilbert-Kunz multiplicity of $R$ with respect to a homogeneous $R_+$-primary ideal 
 $I=(f_1, \dots, f_n)$ with $\deg f_i=d_i$ is expressed in terms of $\mu_{HK} $ as 
\[ e_{HK}(R,I) = \frac{1}{2 \deg Y} \left ( \mu_{HK} ( Syz(f_1, \cdots, f_n) ) - ( \deg Y )^2 \sum_{i=1}^n d_i^2 \right ) , \] 
where $\deg Y = \deg ( \mathcal O_Y(1) )$ is the degree of the curve. 

We fix some notation before stating the main results regarding the global sections of locally free sheaves in \cite{Br2}. 
Similarly to $h^0(\cdot)$, $h^1( \cdot)$ denotes the $\kappa$-dimension of the first homology of a sheaf.
We define $v_k = - {\bar \mu}_k/\deg Y$ and write 
$\lceil q v_k \rceil = qv_k + \pi_k$ with the eventually periodic function $\pi_k = \pi_k(q)$.
For simplicity, we sometimes drop the argument $q$.
Let $\sigma \leq v_1$ and $\rho \gg v_t$ be rational numbers. We set $\lceil q \rho \rceil = q\rho + \pi$ 

For $q=p^e \gg 1$, using Serre duality, the $\kappa$-dimension of the global sections of the twisted sheaf $\mathcal S^q(m)$ 
can be expressed in the following form 
\begin{equation}\label{global}
 \begin{array}{lcl}
  {\displaystyle \sum _{ m= \lceil q \sigma  \rceil } ^ { \lceil q \rho \rceil -1} h^0( \mathcal S^q (m) ) } & = & 
 \frac{q^2}{2 \deg Y} ( \mu_{HK}(\mathcal S) + 2 \rho \deg \mathcal (S) \deg Y + \rho^2 \rk (\mathcal S)  \deg(Y)^2 ) \\
      &&   + q \left ( \rho \rk(\mathcal S) + \frac{\deg \mathcal S}{\deg Y} \right )\left ( 1-g - \frac{\deg Y }{2} \right ) 
        + q \pi ( \deg \mathcal S + \rho \rk(\mathcal S) \deg Y) \\
      && + \rk( \mathcal S) \pi \left( ( \pi -1) \frac{\deg Y}{2} + 1 -g \right ) 
           - {\displaystyle \sum_{k=1}^{t} r_k \pi_k \left ( \pi_k - 1) \frac{\deg Y}{2} + 1 -g \right ) } \\
      && + {\displaystyle  \sum_{k=1}^{t} } \left( 
       {\displaystyle \sum _{ m= \lceil q v_k \rceil } ^ { \lceil q v_k \rceil + \lceil \frac{\deg \omega}{\deg Y} \rceil}  }
             h^1( ( (\mathcal S^q)_k / (\mathcal S^q)_{k-1} ) (m) ) \right ), 
 \end{array} 
\end{equation} 
where $g$ is the genus  and $\omega$ is the canonical sheaf of the curve $Y$ respectively. 

The proof of the above expression in \cite[Theorem~3.2]{Br2} utilizes the fact that the rank and degree are additive on short exact sequences and that the Hilbert-Kunz slope is additive on the quotients in the strong Harder-Harasimhan filtration.

A main theorem in \cite{Br2} is stated as follows.

\begin{theorem}[Brenner~\cite{Br2}, Theorem~4.2]\label{geoBrenner} 
Assume that $\kappa$ is an algebraically closed field of positive characteristic p. 
Let $Y$ be a smooth projective curve over $\kappa$. 
Let $ \mathcal S$ denote a locally free sheaf on $Y$ and $\mathcal S^q$ is the $q$-th Frobenius pull-back of $\mathcal S$. 
Let $\sigma \leq v_1$ and $\rho \gg v_t$ denote rational numbers. Then we have 
\[ \sum _{ m= \lceil q \sigma  \rceil } ^ { \lceil q \rho \rceil -1} h^0( \mathcal S^q (m) ) = 
\alpha q^2 + \beta(q) q+ \gamma(q) ,     \]
where $\alpha$ is a rational number and $\beta(q)$ is an eventually periodic function and $\gamma(q)$ is a bounded function (both with rational values). Moreover, if $\kappa$ is the algebraic closure of a finite field, then $\gamma(q)$ is also an eventually periodic function.  \end{theorem}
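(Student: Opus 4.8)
The plan is to derive the asserted shape directly from the closed formula (\ref{global}) for $\sum_{m=\lceil q\sigma\rceil}^{\lceil q\rho\rceil-1} h^0(\mathcal S^q(m))$, sorting its summands by their order of growth in $q=p^e$. Two preliminary observations set this up. First, by Langer's theorem the strong Harder--Narasimhan data of $\mathcal S^q$ stabilizes: there is a $q_0$ so that for every $q\ge q_0$ the filtration (\ref{FfHN}) is the Harder--Narasimhan filtration of $\mathcal S^q$, with a $q$-independent length $t$, $q$-independent ranks $r_k$, and $q$-independent normalized slopes $\bar\mu_k$; moreover each $v_k=-\bar\mu_k/\deg Y$ is rational, being $-1/\deg Y$ times the quotient of the integer $\deg\big((\mathcal S^{q_0})_k/(\mathcal S^{q_0})_{k-1}\big)$ by $q_0 r_k$. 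Second, since $\rho$ and the $v_k$ are rational, the ceiling discrepancies $\pi(q)=\lceil q\rho\rceil-q\rho$ and $\pi_k(q)=\lceil q v_k\rceil - q v_k$ depend only on the residue of $q$ modulo a fixed integer; as the sequence $(p^e\bmod N)_e$ is eventually periodic for any $N$, the functions $\pi$ and $\pi_k$ are eventually periodic in $e$ with rational values. Formula (\ref{global}) is available for $q\gg1$, which is all that is needed.

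Granting this, I would read off the three parts of the answer from (\ref{global}). The coefficient of $q^2$ is the constant $\alpha=\frac{1}{2\deg Y}\big(\mu_{HK}(\mathcal S)+2\rho\deg(\mathcal S)\deg Y+\rho^2\rk(\mathcal S)(\deg Y)^2\big)$, which is rational because $\mu_{HK}(\mathcal S)$ is a rational number and $\rho,\deg,\rk,g,\deg Y$ are rational. The coefficient of $q$ is $\beta(q)=\big(\rho\rk(\mathcal S)+\tfrac{\deg\mathcal S}{\deg Y}\big)\big(1-g-\tfrac{\deg Y}{2}\big)+\pi(q)\big(\deg\mathcal S+\rho\rk(\mathcal S)\deg Y\big)$, a rational constant plus $\pi(q)$ times a rational constant, hence eventually periodic with rational values. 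All remaining terms of (\ref{global}) are of order $0$ in $q$; I would group them as $\gamma(q)=\rk(\mathcal S)\pi\big((\pi-1)\tfrac{\deg Y}{2}+1-g\big)-\sum_{k=1}^{t}r_k\pi_k\big((\pi_k-1)\tfrac{\deg Y}{2}+1-g\big)+\sum_{k=1}^{t}\sum_{m=\lceil q v_k\rceil}^{\lceil q v_k\rceil+\lceil\deg\omega/\deg Y\rceil}h^1\big(\big((\mathcal S^q)_k/(\mathcal S^q)_{k-1}\big)(m)\big)$. The first two pieces are polynomial expressions in the eventually periodic rational quantities $\pi,\pi_k$, hence eventually periodic with rational values.

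The real work is to bound the double $h^1$-sum. The inner range has the fixed length $\lceil\deg\omega/\deg Y\rceil+1$ independent of $q$. For $q\ge q_0$ each quotient $\mathcal T_k:=(\mathcal S^q)_k/(\mathcal S^q)_{k-1}$ is strongly semistable of rank $r_k$, so for $m$ in this range $\mathcal T_k(m)$ is semistable with $\mu(\mathcal T_k(m))=(m-q v_k)\deg Y=(\pi_k+m-\lceil q v_k\rceil)\deg Y$, which lies in the fixed interval $[0,\deg Y+\deg\omega)$. A semistable bundle of bounded rank whose slope lies in a fixed bounded interval has $h^0$ bounded by a constant (a nonzero section embeds $\mathcal O_Y$, and more generally $h^0$ of a semistable bundle is bounded linearly in its maximal slope), and then Riemann--Roch, $h^1=h^0-\deg-\rk(1-g)$, bounds $h^1$ as well. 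Hence the double sum is a bounded function of $q$, so $\gamma(q)$ is the sum of an eventually periodic function and a bounded one, i.e.\ bounded, with rational values; this establishes $\alpha q^2+\beta(q)q+\gamma(q)$ as claimed.

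For the final clause, assume $\kappa=\overline{\mathbb F}_p$. Then $Y$, $\mathcal S$, and the strong Harder--Narasimhan filtration are all defined over a finite field, and by (\ref{FfHN}) the quotients $\mathcal T_k$ belonging to $\mathcal S^q$ are the successive Frobenius pull-backs of those belonging to $\mathcal S^{q_0}$. A strongly semistable bundle of degree zero on a curve over $\overline{\mathbb F}_p$ is trivialized by a finite \'etale covering, hence is Frobenius-periodic up to isomorphism; twisting each $\mathcal T_k$ by a fixed integer into bounded slope and using that there are only finitely many semistable bundles of given rank and degree over a finite field, one sees that, up to isomorphism, only finitely many bundles occur among the $\mathcal T_k(m)$ with $m$ in the relevant windows and that they recur periodically in $e$. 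Since $h^1$ depends only on the isomorphism class, the double sum --- and therefore $\gamma(q)$ --- is eventually periodic in $e$. I expect this last step to be the main obstacle: everything preceding it is bookkeeping on (\ref{global}), whereas promoting a bounded estimate to an eventually periodic one over $\overline{\mathbb F}_p$ genuinely requires the Frobenius-periodicity of strongly semistable bundles (a result of Lange--Stuhler type) together with the boundedness of the relevant families.
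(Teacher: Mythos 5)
Your proposal is correct and follows essentially the same route as the paper: both read the asymptotic shape directly off formula (\ref{global}), observe that $\alpha$ and the coefficients entering $\beta(q)$ and $\gamma(q)$ are rational and that the ceiling discrepancies $\pi,\pi_k$ are eventually periodic in $e$, and reduce everything to controlling the double $h^1$-sum; your boundedness argument for that sum (the semistable quotients twisted into a fixed slope window have bounded $h^0$, then Riemann--Roch bounds $h^1$) is exactly the content of the lemma the paper cites. The one place the two accounts diverge is the final clause over $\overline{\mathbb F}_p$. The paper tracks the $q$-dependent normalizing twist $m(q)=\lceil qv\rceil$ explicitly: Fact~1 gives that $\deg \mathcal S^q(m(q))$ is eventually periodic, so finiteness of semistable bundles of fixed rank and degree over a finite field makes $\{\mathcal S^q(m(q))\mid q\in M\}$ a finite set, and Fact~2 relates successive members by Frobenius pull-back up to a correcting twist, whence eventual periodicity by pigeonhole. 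You instead invoke Lange--Stuhler-type Frobenius-periodicity of strongly semistable bundles; this is the same mechanism (its proof over $\overline{\mathbb F}_p$ is the same finiteness-plus-pigeonhole argument), but your phrase ``twisting each $\mathcal T_k$ by a fixed integer'' is not quite accurate: since $\deg\mathcal T_k$ grows linearly in $q$, the normalizing twist must depend on $q$, its degree is generally nonzero and only eventually periodic, and it is precisely the resulting discrepancy $(1-\tilde q)\pi(q)$ that the paper's Fact~2 records. With that adjustment --- i.e.\ running the recursion on the $q$-dependently twisted quotients rather than appealing to the degree-zero statement --- your argument matches the paper's, and you have correctly identified this step as the genuine crux.
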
 

We outline the proof of Theorem~\ref{geoBrenner} paying special attention to why 
$\gamma(q)$ is eventually periodic when the ground field $\kappa$ is the algebraic closure of a finite field. 

{\em Beginning of the sketch.}
The first statement on the summation of the global sections is a consequence of (\ref{global}) once the last summand is understood. One observes that all the values in (\ref{global}) including the Hilbert-Kunz slope $\mu_{HK}(\mathcal S)$ are rational numbers. Thus the leading coefficient $\alpha$ of $q^2$ is rational and $\beta(q)$ and $\gamma(q)$ are  both rational valued. The function $\beta(q)$ depends on $\pi$ which is a periodic function of $q \gg 1$. Hence it is eventually periodic.
 The boundedness of $\gamma(q)$ is a result of \cite[Lemma~4.1]{Br2} which proves the existence of an upper bound of the sum of 
 $h^1$ terms (\ref{global}).

Below, we point out two facts in the proof of \cite[Theorem~4.2]{Br2} that lead to 
the periodicity of $\gamma(q)$ under the assumption that $\kappa$ is an algebraic closure of a finite field.
For a given rational number $v$, write $m(q) = \lceil qv \rceil = qv + \pi(q)$ with $0 \leq \pi(q) <1$. 
Notice that $\pi(q)$ is an eventually periodic function. Let $\tilde q$ denote its period. 

{\bf Fact~1.} $\deg( \mathcal S^q ( m(q) )$ is eventually periodic with period $\tilde{q}$. 

We consider a subset $M$ of $\mathbb N$ of the type $M = \{ q_0 \tilde q ^{\ell} | \ell \in \mathbb N \}$ where 
$q_0=p^{e_0}$  for some $e_0$ and satisfies $1 \leq q_0  < \tilde q$. 

{\bf Fact~2.} $\mathcal S^{q \tilde q} ( m(q) ) = \mathcal S^q (m(q) ) ^{\tilde q} \otimes \mathcal O( ( - \tilde q + 1) \pi (q) ) $ for all $q \in M$. 

From {\bf Fact~1}, all $\mathcal S^q( m(q) )$ with $q \in M$ have the same degree. 
Furthermore, due to the fact that there are only finitely many semistable locally free sheaves with the same fixed degree on $Y$
defined over the finite field of which $\kappa$ is an algebraic closure,  
the following set 
\[ \mathscr S := \{  \mathcal S^{q } ( m(q) ) | q \in M \} \]
is a finite set. 
Then, {\bf Fact~2} gives a recursive relation as $q$ varies in $M$. 
Since $\mathscr S$ is a finite set, this relation leads to an eventually periodic pattern. 
Other sheaves, namely $\mathcal S^q(m(q))$ where $q \notin M$, can be identified as $\mathcal S^q( m(q) + s) = \mathcal S^q( m(q) )\otimes \mathcal O( s) $ for some $q \in M$ and $s$ taking values in a fixed bounded interval. 
Thus they will occur eventually periodically as some element from 
$\mathscr S$ tensored by $\mathcal O_Y(s)$ with $s$ in the same fixed bounded interval. 
The $h^1$  terms in $\gamma(q)$ are determined by the sheaves $\mathcal S^q( m(q) )$ just described. 
Hence they inherit the periodic behavior.  {\em End of the sketch.} \hfill $\square$

Now we return to the function $\hk{R,I}{e}$. By (\ref{seq_sheaf}), (\ref{HKsheaf}) and (\ref{HKhomology}), 
it can be expressed in terms of the Frobenius pull-back 
\begin{equation}\label{hksheaf2} 
\begin{array}{lcl} 
\hk{R,I}{e}  =  \hkq{R,I}{q} & = & \displaystyle{\sum_{m \geq 0} h^0( \mathcal O^q (m)) - 
 \sum_{i=1}^n \left ( \sum_{m\geq 0} h^0( ( \mathcal O(- d_i))^q(m) )\right) } \\ \\
 & & +  \displaystyle{ \sum_{m\geq 0} h^0 \left ( \mathcal S^q (m) \right ) } ,
 \end{array}
\end{equation}  
where $\mathcal S = Syz(f_1, \dots, f_n)$.
The last summand in (\ref{hksheaf2}) can be written using (\ref{global}).
The range of the sum is given in \cite[Lemma~9.4]{Br3lecture}. 
Hence by Theorem~\ref{geoBrenner}, the Hilbert-Kunz function has the form 
\[ \hkq{R,I}{q} = \alpha q^d + \beta q^{d-1}  + \gamma(q) . \] 
Furthermore, we observe that the coefficients of $q$ in (\ref{global}) are linear combinations of ranks and degrees
which are additive on short exact sequences. Thus they will cancel each other in the sum (\ref{hksheaf2}). 
Hence the linear coefficient $\beta$ always vanishes in the case of smooth projective curves. 
This gives the Hilbert-Kunz function the desired form $\hkq{R,I}{q} = \alpha q^2 + \gamma(q)$ in (\ref{fcn:Brenner}). 

By Theorem~\ref{geoBrenner}, we know that $\gamma(q)$ is bounded. 
In addition, if $\kappa$ is the algebraic closure of a finite field, $\gamma(q)$ is an eventually periodic function (see \cite[Theorem~6.1]{Br2}). 
We note that the proof of (\ref{fcn:Brenner}) and the periodicity of $\gamma(q)$ has been known prior to \cite{Br2} for other cases  such as in \cite{Con1995, BuC,    Mon2, Tei2002, FaT, MoT2004, Mon5, BrHe2006,  MoT2006} 
where the finiteness condition on the field is not needed. On the other hand, 
if $\kappa$ is not the algebraic closure of a finite field, {i.e.}, if $\kappa$ is transcendental over any finite subfield, then 
whether or not $\gamma(q)$ is eventually periodic is an open question in general.

A similar approach for smooth algebraic curves can be found in Trivedi~\cite{Tr2005}.  
The above sheaf theoretic approach depends heavily on the smoothness condition on the curve $Y$. 
Without smoothness, the Frobenius functor is not an exact functor,
and torsion free sheaves are not necessarily locally free. 
Monsky \cite{Mon2007} extends Brenner and Trivedi's method to irreducible projective plane curves
without smoothness, and then a linear term appears in $\hkq{R,I}{q}$. 
The following Theorem~\ref{planecurve}, quoted from \cite[Theorem~9.10]{Br3lecture}, 
summarizes the very interesting results of \cite{Mon2007} which show that
appearance of $\beta$ reflects the existence of singularities. 
In this theorem, the Hilbert-Kunz functions $\hkq{R,I}{q}$ are taken with respect to $I=(x, y,z)$ and arbitrary $f$. 
Then in \cite{Mon2011}, Monsky focused on nodal curves recovering a result of Pardue~\cite{Par1994} when $I=(x,y,z)$, 
and also extended the result to cases of arbitrary $I$ primary to $(x,y,z)$. More specifically, 
Monsky~\cite{Mon2011} applied Brenner and Trivedi's method, together with the theory for indecomposable vector bundles developed by Burban\cite{Bu2012}, and obtained a sharp result. Precisely, one has 
$\hkq{R,I}{q} = \alpha q^2 + \beta q + \gamma(q)$ for any ideal $I$ primary to $(x,y,z)$. 
The leading and second coefficients $\alpha$ and $\beta$ (nonzero) are constant with explicit formulas given in the paper. Moreover, when $p\neq 3$, $\gamma(q)$ is a periodic function depending on $q$ modulo $3$; otherwise $\gamma(q)$ is a constant. Thus $\hkq{R,I}{q}$ is a polynomial function when $p=3$. 

\begin{theorem}\cite[Theorems~I \& II]{Mon2007}\label{planecurve}
 Let $C=\proj R$ be an irreducible projective plane curve where $R= \kappa[x,y,z]/(f)$ with a homogenous $f$ of degree $d$. Then 
  \[ \hk{R}{e} = \ell(R/(x^{p^e}, y^{p^e}, z^{p^e}, f) = (\frac{3d}{4} + \frac{a^2}{4d} ) (p^e)^2 + b_e ^{\ast} \cdot \frac{a}{d} p^e + \gamma(p^e) , \]
where $a \in \mathbb Z[\frac{1}{p}]$, $0 < a < d$, and $b_e ^{\ast}$ is a periodic integer-valued function. 
The function $ b_e ^{\ast}$ can be written as $ b_e ^{\ast} = \sum_Q \beta_Q^{\ast} (e)$ 
where the sum runs over the singular points $Q$ of $C$ 
and $ \beta_Q^{\ast}(e)$ is an integer-valued periodic function for each $Q$. 
The function $\gamma(p^e)$ is bounded and is eventually period if $\kappa$ is an algebraic closure of a finite field. 
\end{theorem}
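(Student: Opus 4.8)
The plan is to extend the sheaf-theoretic argument of Subsection~\ref{sub_sheaf} from smooth to singular plane curves by passing to the normalization and tracking the local defect contributed by each singular point; this is the route of Monsky~\cite{Mon2007}, phrased here in Brenner's language. First I would rewrite $\hk{R}{e}$ cohomologically on $C=\proj R$ exactly as in (\ref{hksheaf2}). Writing $\pp m=(x,y,z)$, the surjection $\mathcal O_C(-q)^3 \stackrel{x^q,y^q,z^q}{\longrightarrow}\mathcal O_C$ has kernel $\mathcal S_q:=Syz(x^q,y^q,z^q)$; the key point is that the sequence $0\to\mathcal S_q\to\mathcal O_C(-q)^3\to\mathcal O_C\to 0$ splits locally because the quotient $\mathcal O_C$ is free, so even on the singular curve $C$ the sheaf $\mathcal S_q$ is a genuine rank-$2$ vector bundle and $\mathcal S_q=(\Ff^e)^*\mathcal S_1$ with $\mathcal S_1=Syz(x,y,z)|_C=\Omega^1_{\mathbb P^2}(1)|_C$. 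Summing the graded pieces $\dim_\kappa(R/\pp m^{[q]})_m$ and using Serre vanishing as in (\ref{HKsheaf})--(\ref{HKhomology}), $\hk{R}{e}$ becomes a Riemann--Roch polynomial in $q$ coming from $h^0(\mathcal O_C(m))$ and $h^0(\mathcal O_C(m-q))$, plus a finite correction $\sum_m h^0(C,\mathcal S_q(m))$ over an interval of $m$ of length $O(q)$ (the range isolated in \cite[Lemma~9.4]{Br3lecture}).

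Second, let $\nu\colon\tilde C\to C$ be the normalization, a smooth irreducible projective curve of some genus $g$, and set $\mathcal T=\nu^*\mathcal S_1$, a rank-$2$ bundle on $\tilde C$. Since the absolute Frobenius commutes with $\nu$, one has $\nu^*\mathcal S_q=(\Ff^e)^*\mathcal T=:\mathcal T^q$. Tensoring the conductor sequence $0\to\mathcal O_C\to\nu_*\mathcal O_{\tilde C}\to\Delta\to 0$, with $\Delta=\bigoplus_Q\Delta_Q$ a skyscraper supported at the finitely many singular points $Q$, by the locally free sheaf $\mathcal S_q(m)$ and taking cohomology shows that $h^0(\tilde C,\mathcal T^q(m))-h^0(C,\mathcal S_q(m))$ is an $O(1)$ quantity, independent of $q$, that decomposes as a sum of purely local contributions at the points $Q$. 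Summed over the $O(q)$-long range of $m$, these local defects accumulate: in each of the finitely many windows governed by the slopes of $\mathcal T^q$ the local defect at $Q$ is essentially constant, so the total has the shape $\sum_Q\beta^\ast_Q(e)\cdot\frac{a}{d}\,p^e+O(1)$ with each $\beta^\ast_Q(e)$ an integer-valued function that is eventually periodic in $e$, being determined by the singularity type at $Q$ together with the eventually periodic rounding functions $\pi_k(q)$ attached to the Harder--Narasimhan slopes of $\mathcal T^q$. In Monsky's formulation the same contribution is packaged as the behaviour of the syzygy gaps $\delta(x^q,y^q,z^q;f)$ and their localizations at the $Q$'s.

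Third, on the smooth curve $\tilde C$ I would apply Theorem~\ref{geoBrenner} to $\mathcal S=\mathcal T$: Langer's theorem~\cite{La2004} gives a strong Harder--Narasimhan filtration of $\mathcal T^{q_0}$ for $q_0\gg 1$, the normalized slopes $\bar\mu_k$ and the Hilbert--Kunz slope $\mu_{HK}(\mathcal T)$ are well defined, and the master formula (\ref{global}) expresses $\sum_m h^0(\tilde C,\mathcal T^q(m))=\alpha q^2+\beta(q)q+\gamma(q)$ with $\alpha$ rational, $\beta(q)$ eventually periodic, and $\gamma(q)$ bounded (and, when $\kappa=\overline{\mathbb F}_p$, eventually periodic, via the finiteness of semistable bundles of bounded degree used in the proof of Theorem~\ref{geoBrenner}). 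Assembling the three ingredients: the $q^2$-coefficient collects to $\frac{3d}{4}+\frac{a^2}{4d}$, where $a\in\mathbb Z[\frac{1}{p}]$ is the Harder--Narasimhan slope gap of $\mathcal T^q$ normalized by $q$ (equivalently $\lim_e\delta(x^q,y^q,z^q;f)/p^e$), with $0<a<d$ following from the degree bound on a destabilizing sub-line-bundle of $\mathcal T$; the $p^e$-terms coming from ranks and degrees in (\ref{global}) cancel in the alternating sum (\ref{hksheaf2}) exactly as in the smooth case, so the only surviving linear term is the singular correction $b^\ast_e\cdot\frac{a}{d}\,p^e$ with $b^\ast_e=\sum_Q\beta^\ast_Q(e)$; and the bounded remainder is $\gamma(p^e)$, eventually periodic over $\overline{\mathbb F}_p$ since both the contribution from $\tilde C$ and the local terms $\beta^\ast_Q(e)$ are.

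The main obstacle is the second step: proving that the difference between cohomology on $C$ and on $\tilde C$ accumulates to a term that is \emph{exactly} of order $p^e$, with a periodic coefficient that splits as a sum of genuinely local invariants $\beta^\ast_Q(e)$, and which neither leaks into $\gamma$ nor perturbs $\alpha$. This demands a careful local study at each singular point of how the global sections of $\mathcal S_q(m)$ and $\mathcal T^q(m)$ differ as $m$ sweeps the relevant window --- the heart of Monsky's paper --- and a bookkeeping argument reconciling the smooth-curve output of Theorem~\ref{geoBrenner} with these local corrections. A secondary point is checking that the periodicity of $\gamma$ over $\overline{\mathbb F}_p$ is not destroyed by the normalization, which is immediate once one knows the bundles $\mathcal T^q(m)$ entering the $h^1$-terms range over a finite set.
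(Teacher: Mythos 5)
First, a point of orientation: the survey does not prove Theorem~\ref{planecurve} — it quotes it from \cite{Mon2007} via Brenner's lecture notes, and the only guidance it gives about the argument is the remark that Monsky ``extends Brenner and Trivedi's method to irreducible projective plane curves without smoothness.'' Your outline is consistent with that route. The cohomological rewriting of $\hk{R}{e}$ as in (\ref{hksheaf2}), the observation that $Syz(x^{q},y^{q},z^{q})$ is still locally free on the singular curve $C$ because the defining sequence is locally split (the same argument the survey gives after (\ref{seq_complex})), the passage to the normalization $\nu\colon\tilde C\to C$ via the conductor sequence, and the application of Theorem~\ref{geoBrenner} to $\nu^{*}Syz(x,y,z)$ on the smooth model are all the right opening moves; and the per-twist bound $\lvert h^{0}(\tilde C,\mathcal T^{q}(m))-h^{0}(C,\mathcal S_{q}(m))\rvert\le 2\,\ell(\nu_{*}\mathcal O_{\tilde C}/\mathcal O_{C})$ correctly isolates where the new linear term can come from.

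However, the proposal stops exactly where the theorem begins. What your sketch actually delivers is only that $\hk{R}{e}=\alpha q^{2}+O(q)$ with the $O(q)$ error supported, twist by twist, on the singular locus. None of the substantive assertions of Theorems I and II is established: that the accumulated defect is precisely $b_{e}^{\ast}\cdot\frac{a}{d}\,p^{e}+O(1)$ rather than an unstructured $O(q)$ quantity; that $b_{e}^{\ast}$ is integer-valued and periodic; that it splits as $\sum_{Q}\beta_{Q}^{\ast}(e)$ with each $\beta_{Q}^{\ast}$ a genuinely local invariant of the singularity at $Q$; and that the constant $a$ in the linear coefficient is the same $a$ entering the leading coefficient $\frac{3d}{4}+\frac{a^{2}}{4d}$. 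You flag this as ``the main obstacle,'' but it is not a loose end — it is the theorem. There is also a bookkeeping issue you pass over too quickly: once the syzygy term is transported to $\tilde C$, the Riemann--Roch data changes by the delta-invariant $\ell(\nu_{*}\mathcal O_{\tilde C}/\mathcal O_{C})$, while the structure-sheaf terms in (\ref{hksheaf2}) are still computed on $C$ (they are just $\dim_{\kappa}R_{m}$); so the cancellation of the linear terms ``exactly as in the smooth case'' cannot simply be quoted, since the genus of $\tilde C$ and the arithmetic genus of $C$ differ, and one must verify that the resulting discrepancy is absorbed into the singular-point contributions rather than producing an extra, non-local linear term. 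Supplying these pieces — the local analysis at each $Q$ and the reconciliation with the smooth-curve output of Theorem~\ref{geoBrenner} — is the actual content of \cite{Mon2007}, so as it stands the proposal is a plausible strategy outline rather than a proof.
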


We remark that the coefficient of $p^e$ in  the above theorem does not necessarily vanish and 
in fact it is described by the singular points on $C$. 
Monsky's papers \cite{Mon2007, Mon2011} provide some  examples and are interesting resources for those who are interested in the technique presented in this subsection. 

Finally we would like to point out that this sheaf theoretic approach has also been applied to study the uniformity of 
the limit Hilbert-Kunz function as described in Application~\ref{limitHK}.

\subsection{Via Local Chern Characters}\label{sub_Chern}
In this subsection, we present an application of algebraic intersection theory to the study of  Hilbert-Kunz functions. 
We start with a brief introduction to some fundamental notions of this theory in the local ring setting. Then we present
Kurano's theorem that expresses $\hkq{R,I}{q}$ in terms of local Chern characters, followed by discussions on
the vanishing property of the second coefficient, and the existence of Hilbert-Kunz functions in a polynomial form 
with desired coefficients. 

Let $(R, \pp m)$ be a Noetherian local domain of positive characteristic $p$ and $\dim R=d$. 
We assume also that $R$ is a homomorphic image of a regular local ring (in order to define its Todd class). Let $\mathbb G_{\bullet}$ be a bounded complex of free modules of finite ranks such that all the homology modules have finite length. 
We call such $\mathbb G_{\bullet}$ a {\em perfect complex supported at $\pp m$}, although we will abbreviate this 
to {\it perfect complex} here.  

The local Chern character $\ch(\mathbb G_{\bullet} )$ is a fundamental piece of machinery in intersection theory 
and is often defined in the setting of projective schemes. 
Here we describe $\ch(\mathbb G_{\bullet} )$ and other relevant terms following Roberts~\cite{R1} where the 
definitions over local rings (in the setting of affine schemes) are provided in addition to their projective version. 
(See also Fulton~\cite{F} for the general theory.) 

We begin by defining the {\em Chow group} of $R$ which is decomposed into a direct sum of subgroups: 
$A_*(R) = \oplus_{i=0}^{d} A_i(R)$ where each $A_i(R)$ is a quotient of a free group $Z_i(R)$  modulo rational equivalence and $Z_i(R)$ is generated by prime ideals $\pp p$ of $\dim R/\pp p = i$. 
We use $[R/\pp p]$ to denote an element in $Z_i(R)$ and, by abuse of notation, 
its equivalence class in $A_i(R)$. 
For any prime ideal $\pp q$ of $\dim R/\pp q = i+1$ and $x $ not in $\pp q$, 
we define $\divs(\pp q, x)$ in $Z_i(R)$ as $\divs(\pp q, x)= \sum_{\pp p} \length(R/(\pp q+(x)) )_{\pp p} [R/\pp p]$  where the summation is taken over all prime ideals $\pp p$ of dimension $i$ containing $\pp q$ and $x$.
Notice that this is a finite sum since all the $\pp p$ in the sum are minimal prime ideals of $R/( \pp q+(x) )$ and there are only finitely many of them.
{\em Rational equivalence} is the equivalence relation on  $Z_i(R)$ induced by setting all $\divs(\pp q, x)=0$.  
Applying this definition to $R/\pp m$ in place of $R$ gives the Chow group of $R/\pp m$ as a free group of rank one. Indeed $A_*(R/\pp m) = A_0(R/\pp m) = \mathbb Z \cdot [R/\pp m]$.
Its tensor product with the rational number field is denoted $A_*(R/\pp m)_{\mathbb Q} := A_*(R/\pp m) \otimes_{\mathbb Z} \mathbb Q$.  ({\em c.f.} \cite[Section~1.1]{R1}).

Let $\mathbb G_{\bullet}$ be a perfect complex as defined above. 
The {\em local Chern character} $\ch(\mathbb G_{\bullet} )$ is a map from $A_*(R)$ to $A_*(R/\pp m)_{\mathbb Q}$. 
For any class $\alpha$ in $A_*(R)$, the notation $\ch(\mathbb G_{\bullet} )(\alpha)$ means applying sufficiently many hyperplane sections arising from the complex $\mathbb G_{\bullet}$ to the class $\alpha$. 
Many details are involved to assure that $\ch(\mathbb G_{\bullet} )$ is a well-defined map. For this we refer the readers to Roberts~\cite[Section~11.5]{R1} which will further lead to appropriate references for the precise definitions in each step along the way. Here we mention some properties about the Chern characters. 
First, the local Chern character is additive. Second, it decomposes  as 
$\ch(\mathbb G_{\bullet} ) = \ch_d(\mathbb G_{\bullet} ) + \ch_{d-1}(\mathbb G_{\bullet} ) + \cdots + \ch_0(\mathbb G_{\bullet} )$. 
Each $\ch_i(\mathbb G_{\bullet} )$ results in $i$ hyperplane sections so the dimension will drop exactly by $i$. 
Hence for any $\alpha \in A_*(R)$, if we consider its decomposition in $ A_*(R)$ as 
$\alpha = \alpha_d + \alpha_{d-1} + \cdots + \alpha_0$,  then $\ch_i(\mathbb G_{\bullet}) (\alpha_j) =0$ for $i\neq j$, 
and  thus $\ch(\mathbb G_{\bullet} )(\alpha) = \sum _{i=0}^d \ch_i(\mathbb G_{\bullet} )(\alpha_i)$. 
We note also that the local Chern character can be defined for a bounded complex supported at a larger spectrum; that is, for those bounded complexes whose homology modules need not have finite length. In this case, the target Chow group will also be larger. This fact will be used in the definition of the Todd class below.

Now we describe the Todd class. 
Following Roberts~\cite[Section~12.4]{R1}, it is defined for bounded complexes of finitely generated modules. 
Let   $\mathbb M_{\bullet}$ be such a complex supported at an ideal $\pp a$. 
By assumption, $R$ is a homomorphic image of a regular local ring, say $S$. 
Viewing $\mathbb M_{\bullet}$ as a complex over $S$, we take a finite free resolution 
$\mathbb H_{\bullet}$ of $\mathbb M_{\bullet}$ over $S$. 
Consider the class $[S]$ defined by the zero ideal in the Chow group $A_*(S)$. 
The {\em Todd class} of $\mathbb M_{\bullet}$ is given by 
$\td( \mathbb M_{\bullet} ) = \ch(\mathbb H_{\bullet})([S])$ where 
$\ch(\mathbb H_{\bullet})$ is defined in a more general sense than previously described and  
as a result, $\td( \mathbb M_{\bullet} )$ is a class in  $A_*(R/\pp a)_{\mathbb Q}$. 
It is important to note that the definition of the Todd class is independent of the choice of the regular local ring $S$. 
The Todd class can also be defined for an $R$-module $M$ by considering it as a complex concentrated in degree 0. 
In the special case of $\mathbb M_{\bullet}$ being a perfect complex, \cite[Theorem~12.4.2]{R1} relates the Todd class to 
the Euler characteristic
\begin{equation}\label{todd}
\td(\mathbb M_{\bullet}) = \chi(\mathbb M_{\bullet})[R/\pp m] , 
\end{equation}
which is part of the local Riemann-Roch formula to be described next.

Local Chern characters do not have well-behaved functorial properties, namely, they do not commute with push-forwards or pull-backs. Todd classes are introduced to fill in this gap via Riemann-Roch Theorem ({\em c.f.} Serre~\cite[Introduction]{Ser1961}). 
The Riemann-Roch Theorem can be stated in many different forms depending on the context. The local Riemann-Roch Formula (\cite[Section~12.6]{R1}) relates local Chern characters to the Euler characteristic. It states that 
\begin{equation}\label{RR}
\td( \mathbb G_{\bullet} \otimes_R M) = \ch(\mathbb G_{\bullet}) (\td(M)).
\end{equation}
By assumption, $\mathbb G_{\bullet} $ is perfect which implies that $\mathbb G_{\bullet} \otimes_R M$ is also perfect. 
Thus (\ref{todd}) and (\ref{RR}) together give 
\begin{equation}\label{EularRR}
\chi( \mathbb G_{\bullet} \otimes_R M)[R/\pp m] = \ch(\mathbb G_{\bullet}) (\td(M)). 
\end{equation}
Notice that the local Chern character $\ch(\mathbb G_{\bullet} )$ maps into the rational Chow group of $R/\pp m$, 
or precisely $\mathbb Q \cdot [R/\pp m]$. Thus its image may be identified with a rational number. 

Now replacing $M$ by $^eR$ yields
\begin{equation}\label{fcn:Euler}  
\chi_{\mathbb G_{\bullet}}( {^eR} )  =   \ch( \mathbb G_{\bullet} ) (\td ( ^eR ) ) .
\end{equation}
(See also Fulton~\cite[Example~18.3.12]{F}). 

\begin{theorem}[Kurano~\cite{K16, K5}]\label{fcn:Kurano}
Let  $R$ be a local ring that satisfies the following conditions:  \\
$({i})$ $R$ is the homomorphic image of a regular local ring whose residue field is perfect; \\
$({ii})$ $R$ is $F$-finite; \\ 
$({iii})$ $R$ is a Cohen-Macaulay ring. \\
Let $I$ be an $\pp m$-primary ideal of finite projective dimension and let $\mathbb G_{\bullet}$ be a finite free resolution of $R/I$. 
Then 
\[ 
\hk{R,I}{e} = (\ch(\mathbb G_{\bullet}) (c_d) ) (p^e)^d + (\ch (\mathbb G_{\bullet}) (c_{d-1}) ) (p^e)^{d-1} + 
 \cdots + ( \ch (\mathbb G_{\bullet}) ( c_0 ) ) , 
\]
where $c_i$ is the $i$-th Todd class of $R$ in the  $i$-th component of the Chow group, 
namely $\td( {R} ) = c_d + \cdots + c_0 \in A_*(R)$. 
\end{theorem}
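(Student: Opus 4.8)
The plan is to push the identity $\hk{R,I}{e}=\chi_{\mathbb G_\bullet}({}^eR)$ through the local Riemann--Roch formula and then identify the Todd class of ${}^eR$.

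\emph{Reduction to an Euler characteristic, via Riemann--Roch.} Since $I$ has finite projective dimension, $R/I$ has a finite free resolution $\mathbb G_\bullet$, and by the Peskine--Szpiro acyclicity criterion each $\Ff^e(\mathbb G_\bullet)$ is again acyclic with $H_0(\Ff^e(\mathbb G_\bullet))=R/I^{[p^e]}$, so exactly as in $(\ref{euler})$,
\[
 \hk{R,I}{e}=\chi_{\mathbb G_\bullet}({}^eR).
\]
(The Cohen--Macaulay hypothesis is in fact automatic: by the New Intersection Theorem the resolution has length $\ge d$, while Auslander--Buchsbaum gives $\operatorname{pd}(R/I)=\depth R$, and together these force $\depth R=d$ and $\mathbb G_\bullet$ of length exactly $d$.) Because $R$ is a homomorphic image of a regular local ring the Todd class is defined, and because $\mathbb G_\bullet$ is a perfect complex supported at $\pp m$, the local Riemann--Roch formula $(\ref{RR})$ together with $(\ref{todd})$ gives $(\ref{fcn:Euler})$:
\[
 \hk{R,I}{e}=\ch(\mathbb G_\bullet)\bigl(\td({}^eR)\bigr).
\]
So everything reduces to computing $\td({}^eR)\in A_*(R)_{\mathbb Q}$ in terms of $\td(R)=c_d+\cdots+c_0$.

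\emph{The Todd class of ${}^eR$ --- the crux.} I claim $\td({}^eR)=\sum_{i=0}^{d}(p^e)^i c_i$. The module ${}^eR$ is the pushforward $F^e_*\mathcal O_{\spec R}$ of the structure sheaf along the $e$-th iterate of the absolute Frobenius $F\colon\spec R\to\spec R$, which is a \emph{finite} morphism since $R$ is $F$-finite. I would derive the claim from the covariant functoriality of the local Riemann--Roch transformation under proper pushforward (Roberts~\cite{R1}; cf.\ Fulton~\cite[Example~18.3.12]{F}), which gives $\td({}^eR)=F^e_*\bigl(\td(R)\bigr)=\sum_i F^e_*(c_i)$, combined with the cycle-level fact that $F^e_*$ acts on $A_i(R)$ as multiplication by $(p^e)^i$. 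For the latter: $F^e$ is the identity on $\spec R$, and for a prime $\pp p$ with $\dim R/\pp p=i$ it induces on the generic point of $V(\pp p)$ the map $x\mapsto x^{p^e}$, whence $F^e_*[R/\pp p]=\bigl[\operatorname{Frac}(R/\pp p):\operatorname{Frac}(R/\pp p)^{p^e}\bigr]\,[R/\pp p]$; since the residue field of $R$ is perfect this field degree equals $(p^e)^i$ (the $p$-degree of a field finitely generated of transcendence degree $i$ over a perfect field is $p^i$, and it is unchanged under finite extension, which one reaches via a Cohen presentation of $R/\pp p$).

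\emph{Conclusion and the main difficulty.} By additivity of the local Chern character and the fact that $\ch_i(\mathbb G_\bullet)$ kills the $A_j$-part for $j\ne i$,
\[
 \hk{R,I}{e}=\ch(\mathbb G_\bullet)\Bigl(\sum_{i=0}^{d}(p^e)^i c_i\Bigr)=\sum_{i=0}^{d}\bigl(\ch_i(\mathbb G_\bullet)(c_i)\bigr)(p^e)^i ,
\]
and each $\ch_i(\mathbb G_\bullet)(c_i)\in A_0(R/\pp m)_{\mathbb Q}=\mathbb Q\cdot[R/\pp m]$ is identified with a rational number; this is the asserted formula. The one genuinely hard step is the identity $\td({}^eR)=\sum (p^e)^i c_i$: it is where the real intersection-theoretic input enters (functoriality of the local Riemann--Roch map under the finite Frobenius morphism, together with the $p$-degree computation that truly uses perfectness of the residue field). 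Steps~1, 2 and the final bookkeeping are immediate from facts already recorded in the excerpt.
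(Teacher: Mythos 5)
Your proposal is correct and follows essentially the same route as the paper: reduce $\hk{R,I}{e}$ to $\chi_{\mathbb G_\bullet}({}^e R)$ via Peskine--Szpiro, apply the local Riemann--Roch formula to get $\ch(\mathbb G_\bullet)(\td({}^e R))$, and use $\td_i({}^e R)=(p^e)^i c_i$ together with the vanishing $\ch_i(\mathbb G_\bullet)(c_j)=0$ for $i\ne j$. The only difference is that where the paper simply cites Kurano's Lemma~2.2(iii) for $\td({}^e R)=\sum_i (p^e)^i c_i$, you sketch its proof (covariance of the Riemann--Roch transformation under the finite Frobenius pushforward plus the $p$-degree computation, as in Fulton's Example~18.3.12, which the paper also points to), and your parenthetical observation that Cohen--Macaulayness follows from the New Intersection Theorem is a correct bonus.
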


With the prior preparation, we now bring up two main facts that lead to the results above:
 (1) one has  $\td_i(  {^e\!R} ) = p^{i e} c_i  \in A_i(R)$  (\cite[Lemma~2.2(iii)]{K5}); and 
(2) for any $i$, the $i$-th Chern character maps $c_i$ to a rational number but all other $c_j$ to zero.
In other words, $\ch( \mathbb G_{\bullet}) ( c_i ) = \ch_i(\mathbb G_{\bullet}) ( c_i) $ is a 
rational number in $A_*(R/\pp m)_{\mathbb Q} \cong \mathbb Q$. 
Therefore by (\ref{euler}) and (\ref{fcn:Euler})
\[
\begin{array}{lll} 
\hk{R,I}{e}  & = & \chi_{\mathbb G_{\bullet}} ( {^e\!R} ) \\
& = & \ch( \mathbb G_{\bullet} ) (\td ( {^e\!R} ) ) \\
& = & \sum_{i=0}^d \ch_i(\mathbb G_{\bullet}) \left ( (p^e)^d c_d  + (p^e)^{d-1}   c_{d-1} + \cdots + c_0 \right) \\ 
& = & \sum_{i=0}^d (\ch_i(\mathbb G_{\bullet}) (c_i ) ) (p^e)^i . 
\end{array} 
\]
Or equivalently with $q=p^e$,
\begin{equation}\label{ChernHK}
 \hkq{R,I}{q} = (\ch(\mathbb G_{\bullet}) ( c_d ) q^d + (\ch (\mathbb G_{\bullet}) ( c_{d-1}) ) q^{d-1} + 
 \cdots + ( \ch (\mathbb G_{\bullet}) ( c_0 ) ). 
\end{equation} 
This is a polynomial in $q$ of degree $d$ and all the coefficients are rational numbers. 

The leading coefficient, which never vanishes, is the Hilbert-Kunz multiplicity. However, 
the second coefficient $\beta_I(R) = \ch_{d-1}(\mathbb G_{\bullet}) (c_{d-1} ) $ can sometimes vanish, 
for instance, in the case of a two-dimensional normal domain as observed in \cite{HMM} and \cite{Br2}.  
More generally, from Kurano~\cite[Corollary~1.4]{K5}, 
we learn that if $R$ satisfies the conditions {\em $($i$)$} and {\em $($ii$)$} above, and $R$ is normal and $\mathbb Q$-Gorenstein ({i.e.}, the canonical module defines a torsion element in the divisor class group), then $\beta_I(R)$ vanishes. This is also true for the second coefficient in 
the function $\hkq{M,I}{q}$ of a  module $M$. 

In \cite[Theorem~3.5]{ChK1}, the vanishing property of $\beta$ is characterized by the classes in the Chow group. 
For example, the vanishing of $\beta_I(R)$ for every $I$ is equivalent to the fact that the second top Todd class 
$\tau_{d-1}=\td_{d-1}( R)$ is {\em numerically equivalent} to zero, i.e., 
$\ch(\mathbb G_{\bullet}) (\tau_{d-1})=0$ for any perfect complex $\mathbb G_{\bullet}$.
Furthermore, if  the localization $R_{\pp p}$ is Gorenstein for all minimal prime ideals $\pp p$, then $\beta_I(R) =0$
if and only if $\tau_{d-1}$ and its canonical module $\omega_R$ are numerically equivalent, namely, 
 $\ch(\mathbb G_{\bullet} )(\tau_{d-1}) = \ch(\mathbb G_{\bullet}) ( \omega_R)$ for any $\mathbb G_{\bullet}$
 that is again perfect. 

Theorem~\ref{fcn:Kurano} also makes it possible to prove the existence of Cohen-Macaulay local rings 
such that the Hilbert-Kunz functions have polynomial expressions as in (\ref{ChernHK}) and their rational coefficients have the desired positive, negative or vanishing properties (\cite[Theorem~1.1]{ChK2}). 
The theorem states that if $\epsilon_0, \epsilon_1, \dots, \epsilon_d$ are integers such that 
$\epsilon_i=0$ for $i \leq d/2$, $\epsilon_i= -1, 0, \text{ or}, 1$ for $d/2< i <d$ and $\epsilon_d=1$, then there exists a $d$-dimensional Cohen-Macaulay local ring $R$ of characteristic $p$, an $\pp m$-primary ideal $I$ of $R$ of finite projective dimension, and positive rational numbers $ \beta_0, \beta_1, \dots, \beta_d$ such that 
$ \hkq{R,I}{q} = \sum_{i=0}^{d} \epsilon_i \beta_i q^i $ for all $i >0$. 
In \cite{ChK2}, a convex cone in a finite dimensional vector space, named Cohen-Macaulay cone, spanned by maximal Cohen-Macaulay modules is introduced to carry out the proof. This theorem proves the existence but does not offer a constructive method to build a ring with the desired Hilbert-Kunz function.

In general, it is difficult to construct rings that have specific forms of Hilbert-Kunz functions. 
This is true even in the setting of affine semigroup rings. (See also Remark~\ref{construction}.) 
Investigations in this direction can provide not only desired Hilbert-Kunz functions but also, as shown in Theorem~\ref{fcn:Kurano}, a possible approach to access local 
Chern characters whose values are equally, if not more, challenging to obtain.

\subsection{Via Bruns-Gubeladze (BG) Decomposition}\label{sub_BG}
This subsection describes a cellular decomposition on $\mathbb R^d$ and its fundamental domain 
when $R$ is an affine semigroup ring.
There are one-to-one correspondences between the set of full dimensional cells in this decomposition, 
the set of conic divisor classes, and the set of rank one modules as direct summands of an extension ring of $R$. 
Then we present Bruns's ideas of using these correspondences to calculate Hilbert-Kunz functions. 

Let $\kappa$ be an algebraically closed field of positive characteristic $p$ and $R$ be a $d$-dimensional normal $\kappa$-subalgebra of 
the polynomial ring $\kappa[t_1, \cdots, t_d]$ generated by finitely many monomials.
Then the exponents of monomials in $R$ form a finitely generated monoid $M$ in $\mathbb Z^d$.
Such an $R$ is called an {\em affine semigroup ring}, denoted by $R=\kappa[M]$.  We use $\mathbb Q_{\geq 0}M$ 
(and $\mathbb R_{\geq 0}M$) to denote the extended rational cone spanned by $M$ in $\mathbb Q^d$ ({\em resp.} $\mathbb R^d$), 
and $\mathbb ZM$ to denote the free abelian group generated by $M$. 
Then $\mathbb Z M$ has rank $d$ since $\dim R=d$.
We assume $\mathbb Z M$ equals the ambient group $\mathbb Z^d$.
The assumption that $R$ is a normal domain is equivalent to a condition on the semigroup:
$\mathbb ZM \cap \mathbb Q_{\geq 0}M = M$ ({\em c.f.} \cite{Ho1972}). 
Let $\frac{1}{n} M = \{ \frac{x}{n}: x\in M\}$ and consider $R^{\frac{1}{n}}= \kappa[ \frac{1}{n} M ]$ as an extension ring of $R$. 
Section~\ref{affineEhrhart} is devoted to the affine semigroup rings 
where relevant definitions  and terms will be stated in more detail. 

The idea of a cellular decomposition of $\mathbb R^d/\mathbb Z^d$ as a quotient group grew out of 
Bruns and Gubeladze's work \cite{BruG} that investigates the minimal number of generators and depth of the divisorial ideal classes of a normal affine semigroup ring in general. 
Later it was constructed precisely with the lattice structure in \cite{Bru1} which we now refer to as {\em BG decomposition}. 
Some readers might find it helpful to read Section~\ref{affineEhrhart} before proceeding to the current subsection. 
Here as part of the section for techniques, we present the main theorems on the BG decomposition in \cite{Br1} that leads to the computation of the Hilbert-Kunz function of an affine semigroup ring $R$ with respect to the maximal ideal generated by all monomials other than 1. 

Let $\sigma_1, \dots, \sigma_{\ell}$ be the linear functionals that define the support hyperplanes for $M$ so that 
 $\sigma_i(M) \geq 0$ for all $i=1, \dots, \ell$.  Since $\rank \gp M = \dim \kappa[M]=d$, we have $\ell \geq d$ and 
the set of support hyperplanes is uniquely determined if it is irredundant.  
The map $\sigma=( \sigma_1, \dots, \sigma_{\ell}) : M \rightarrow \mathbb Z^{\ell}$ 
is called the {\em standard embedding} ({\em c.f.} \cite{BruG}) which has also been used in Hochster~\cite{Ho1972} and Stanley~\cite{St3}. Such a $\sigma$ transforms a normal semigroup to an isomorphic subsemigroup in $\mathbb Z^{\ell}$ that is said to be {\em full} in \cite{Ho1972} or {\em pure} in \cite{BruG}. 
While normality is independent of the embeddings, ``fullness" ({\em resp.} ``pureness") is a notion relative to the embedding. 
But we will not get into the details on this issue here.   

Obviously $\sigma$ can be extended to define a linear homomorphism on $\mathbb R^d$ and $\sigma(\mathbb R^d) \cong \mathbb R^d \subseteq \mathbb R^{\ell}$. 
We obtain a cell decomposition $\Gamma$ of $\mathbb R^d$ by the hyperplanes
\[ H_{i,z} = \{ x\in \mathbb R^d | \sigma_i(x) =z\}, \,\,\, i =1, \dots, \ell, \text{ and } z \in \mathbb Z , \] 
which then induces a cell decomposition $\bar \Gamma$ on $\mathbb R^d/\mathbb Z^d$. 
For concreteness, both $\Gamma$ and $\bar{\Gamma}$ will both be called the BG decomposition.

A {\em conic divisorial} ideal in \cite{BruG, Bru1} is defined to be 
\[ C(y) = \kappa\cdot ( \mathbb Z^d \cap (y + \mathbb R_{+} M)), \,\,\, y \in \mathbb R^d , \]
and it should be considered as the ideal of $R$ generated by the monomials corresponding to 
the lattice points on the right hand side. 
Conic divisorial ideals are among the divisorial ideals that generate the divisor class group $\cl(R)$. 
The set of conic divisor classes in $\cl(R)$ contains all torsion elements but it can be larger if $\cl(R)$ is nontorsion 
(\cite[p.~141]{BruG}, \cite[Corollary1.3]{Bru1}).
The following bijections are the key to the discussions in this subsection. 

\begin{theorem}\cite[Corollary~1.2]{Bru1}\label{thm_BG}
The following sets are in a bijective correspondence: \\
$(a)$ the set of conic divisor classes\,$;$ \\
$(b)$ the fibers of the map $\mathbb R^d/\mathbb Z^d \rightarrow \mathbb R^{\ell} / \sigma(\mathbb Z^d)$ 
 induced by $x \mapsto \lceil \sigma( x) \rceil ;$ \\
 $(c)$ the full dimensional cells of $\bar \Gamma$.
\end{theorem}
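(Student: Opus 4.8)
The plan is to funnel both (a) and (c) through a single bookkeeping object: the integer vector $\lceil\sigma(y)\rceil=(\lceil\sigma_1(y)\rceil,\dots,\lceil\sigma_\ell(y)\rceil)\in\mathbb Z^\ell$, read modulo $\sigma(\mathbb Z^d)$. The elementary fact underlying everything is that for $y\in\mathbb R^d$,
\[
\mathbb Z^d\cap(y+\mathbb R_{+}M)=\{x\in\mathbb Z^d:\sigma_i(x)\ge\sigma_i(y)\text{ for all }i\}=\{x\in\mathbb Z^d:\sigma_i(x)\ge\lceil\sigma_i(y)\rceil\text{ for all }i\},
\]
the last equality because each $\sigma_i(x)$ is an integer. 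Hence the conic divisorial ideal $C(y)$ depends on $y$ only through $\lceil\sigma(y)\rceil$, and the candidate correspondences are $\bigl(\text{class of }C(y)\bigr)\longleftrightarrow\overline{\lceil\sigma(y)\rceil}\longleftrightarrow\bigl(\text{the cell of }\bar\Gamma\text{ containing }\bar y\bigr)$. Note also that $\bar x\mapsto\overline{\lceil\sigma(x)\rceil}$ is a well-defined map $\mathbb R^d/\mathbb Z^d\to\mathbb R^\ell/\sigma(\mathbb Z^d)$, since for $a\in\mathbb Z^d$ one has $\sigma(a)\in\mathbb Z^\ell$ and $\lceil\sigma(x)+\sigma(a)\rceil=\lceil\sigma(x)\rceil+\sigma(a)$.

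For (a) $\leftrightarrow$ (b) I would invoke the classical description of the divisor class group of a normal affine semigroup ring (Chouinard, Hochster; see \cite{BruG}): $\cl(R)\cong\mathbb Z^\ell/\sigma(\mathbb Z^d)$, where the $i$-th coordinate records the coefficient along the prime divisor cut out by $\sigma_i=0$ and the divisor of a Laurent monomial $t^{a}$ ($a\in\mathbb Z^d$) is $\sigma(a)$. From this the class of $C(y)$ in $\cl(R)$ is determined by, and determines, the residue of $\lceil\sigma(y)\rceil$: since $t^{a}C(y)=\kappa\cdot\bigl(\mathbb Z^d\cap((y+a)+\mathbb R_{+}M)\bigr)=C(y+a)$ and $\lceil\sigma(y+a)\rceil=\lceil\sigma(y)\rceil+\sigma(a)$, one direction (congruent ceilings $\Rightarrow$ same class) is entirely elementary, while the converse is exactly where the structure theory of (torus-invariant) divisorial ideals enters, to see that two monomial divisorial ideals in the same class differ by multiplication by a Laurent monomial and not by a more general unit of the fraction field. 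Because the conic divisor classes are by definition the classes of the $C(y)$, the set in (a) is then in bijection with the image of $\bar x\mapsto\overline{\lceil\sigma(x)\rceil}$, and the image of a map is canonically in bijection with its set of nonempty fibers; this yields (a) $\leftrightarrow$ (b). I expect this identification — getting the normalization of $\cl(R)\cong\mathbb Z^\ell/\sigma(\mathbb Z^d)$ right and justifying the converse above — to be the main obstacle.

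For (b) $\leftrightarrow$ (c) I would argue upstairs in $\mathbb R^d$ and descend. The fiber of $x\mapsto\lceil\sigma(x)\rceil$ over $n\in\mathbb Z^\ell$ is the half-open polyhedron $P_n=\{x\in\mathbb R^d:n_i-1<\sigma_i(x)\le n_i\text{ for all }i\}$, whose facet hyperplanes lie among the $H_{i,z}$. First I would check that a nonempty $P_n$ is full-dimensional: since $\mathbb R_{+}M$ is pointed and $d$-dimensional there is a $v$ with $\sigma_i(v)>0$ for all $i$, and for $x_0\in P_n$ the point $x_0-\varepsilon v$ lies in the open region $O_n=\{x:n_i-1<\sigma_i(x)<n_i\ \forall i\}$ for all small $\varepsilon>0$, so $P_n$ has nonempty interior, necessarily equal to $O_n$. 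The set $O_n$ is a connected component of $\mathbb R^d\setminus\bigcup_{i,z}H_{i,z}$, so $\overline{P_n}=\overline{O_n}$ is a full-dimensional closed cell of $\Gamma$; conversely any such cell has relative interior avoiding all $H_{i,z}$, which confines each $\sigma_i$ to a single open unit interval there and hence determines $n$, so every full-dimensional cell of $\Gamma$ equals $\overline{P_n}$ for a unique $n$. Thus $n\mapsto\overline{P_n}$ is a bijection between nonempty fibers over $\mathbb Z^\ell$ and full-dimensional cells of $\Gamma$; it is $\mathbb Z^d$-equivariant (translation by $a$ carries $P_n$ to $P_{n+\sigma(a)}$), so passing to the quotients $\mathbb R^d/\mathbb Z^d$ and $\mathbb R^\ell/\sigma(\mathbb Z^d)$ matches the fibers of the map in (b) with the full-dimensional cells of $\bar\Gamma$, completing (b) $\leftrightarrow$ (c). The one substantive point beyond bookkeeping in this paragraph is the full-dimensionality of the $P_n$, which genuinely uses pointedness of $\mathbb R_{+}M$.
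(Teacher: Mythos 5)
The paper quotes this result from Bruns \cite[Corollary~1.2]{Bru1} without giving a proof, so there is no in-paper argument to compare against; your proposal is correct and reconstructs essentially Bruns's original route, funnelling everything through the ceiling vector $\lceil\sigma(y)\rceil$ read modulo $\sigma(\mathbb Z^d)$ and invoking the standard description of $\cl(R)$ for a normal semigroup ring to link (a) with (b). You correctly isolate the one genuinely non-elementary input --- that monomial divisorial ideals in the same class differ by a Laurent monomial, equivalently $\cl(R)\cong\mathbb Z^{\ell}/\sigma(\mathbb Z^d)$ with the right normalization --- and your identification of the fibers with the half-open regions $P_n$ is exactly the ``upper closure'' $\lceil\gamma\rceil$ that the paper defines immediately after the theorem.
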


Let $\bar \gamma$ be a full-dimensional cell in $\bar \Gamma$ represented by some full-dimensional 
$\gamma$ in $\Gamma$. 
Fix an element $y$ in $\gamma$, the {\em upper closure} of $\gamma$ is defined to be 
\[ \lceil \gamma \rceil = \{ x \in \mathbb R^d | \lceil \sigma(x)\rceil = \lceil \sigma (y) \rceil \} . \]
This closure, however, is independent of $y$. 
Theorem~\ref{thm_BG} shows that all the conic ideals $\mathcal C(x)$ with $x\in \lceil \gamma \rceil$ coincide in $\cl(R)$. 
Thus we may use $\mathcal C_{\gamma}$ to denote such a conic ideal class. 
We recall $R^{\frac{1}{n}} =  k[  \frac{1}{n}  M ]$ for any positive integer $n$.
For each residue class $c \in ( \frac{1}{n} \gp M) / \gp M$, let $I_c$ be the ideal generated by $R^{\frac{1}{n}} \cap k \cdot c$. 
Since $M$ is normal, $I_c$ is equivalently generated by monomials corresponding to 
$ \mathbb R_{\geq 0} M \cap (c+ \mathbb Z_{\geq 0}M ) $, and  so it defines the same conic divisor class as $\mathcal C(-c)$ which must be isomorphic to $\mathcal C_{\gamma}$ for some full dimensional cell $\gamma$ by Theorem~\ref{thm_BG}.
This shows that the set of $I_c$ corresponds to a subset of  the conic divisor classes. 
Then by \cite[Proposition~3.6]{BruG} we know that the set of $I_c$ and the set of the conic divisor classes are in a  
one-to-one correspondence. 

By \cite[Theorem~3.2(a)]{BruG}, 
$R^{\frac{1}{n}}$ can be decomposed as an $R$-module into the direct sum of 
$I_c$ over all  $c\in (\frac{1}{n}\mathbb ZM)/\mathbb ZM$ with $\rank_R I_c =1$. 
This implies that the rank of $R^{\frac{1}{n}}$ is the number of the residue classes in 
$(\frac{1}{n} \mathbb Z M)/ \mathbb Z M$. 
Furthermore, each $I_c$ corresponds to a unique conic divisor class $\mathcal C_{\gamma}$, 
but there may be multiple $I_c$'s that correspond to the same $\mathcal C_{\gamma}$. 
Let $\nu_{\gamma}(n)$ be the multiplicity with which the isomorphism class of $\mathcal C_{\gamma}$ occurs in the decomposition of $R^{\frac{1}{n}}$ as an $R$-module. From the previous discussion, it is not difficult to see  that 
$\nu_{ \gamma} (n)=  \#\{ I_c | c \in \lceil \gamma \rceil \}$. 

\begin{theorem}\label{thm_nu}\cite[Theorem~3.1]{Bru1}
Let $\gamma$ be a full-dimensional cell in $\Gamma$. Then
\[ \nu_{\gamma} (n)= \#( \lceil \gamma \rceil  \cap \frac{1}{n}\mathbb Z^d). \] 
Furthermore, there exists a quasi-polynomial $q_{\gamma}(n): \mathbb Z \rightarrow \mathbb Z$ 
with rational coefficients such that 
$q_{\gamma}(n) = \nu_{\gamma}(n)$ for all $n \geq 1$. In particular, 
\[ q_{\gamma}(n) = \operatorname{vol}( \gamma ) \, n^d + b_{\gamma} n^{d-1} + \tilde{q}_{\gamma}(n), 
\,\,\, n \in \mathbb Z , \]
where $\operatorname{vol}( \gamma )$ is the volume of $\gamma$,  $b_{\gamma}$ is  a constant 
and  $\tilde{q}_{\gamma}(n)$ is a quasi-polynomial of degree $d-2$. 
\end{theorem}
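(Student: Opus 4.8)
The plan is to split Theorem~\ref{thm_nu} into the combinatorial identity $(a)$, the bare quasi-polynomiality of $\nu_\gamma$, and the refined shape of the first two coefficients in $(c)$. For $(a)$, recall from the discussion preceding the theorem that $\nu_\gamma(n)=\#\{I_c\mid c\in\lceil\gamma\rceil\}$, where $c$ ranges over the residue classes of $(\tfrac1n\gp M)/\gp M=(\tfrac1n\mathbb Z^d)/\mathbb Z^d$ and "$c\in\lceil\gamma\rceil$'' means that $c$ has a representative lying in the upper closure. The key observation is that $\lceil\gamma\rceil$ meets each $\mathbb Z^d$-coset in at most one point: if $x,x'\in\lceil\gamma\rceil$ differ by a lattice vector then $\sigma(x)-\sigma(x')\in\sigma(\mathbb Z^d)\subseteq\mathbb Z^{\ell}$, while $\lceil\sigma(x)\rceil=\lceil\sigma(x')\rceil$ forces $|\sigma_i(x)-\sigma_i(x')|<1$ for every $i$, hence $\sigma(x)=\sigma(x')$ and $x=x'$ by injectivity of $\sigma$. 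So counting residue classes with a representative in $\lceil\gamma\rceil$ coincides with counting the points of $\tfrac1n\mathbb Z^d$ that actually lie in $\lceil\gamma\rceil$, which is $(a)$; Theorem~\ref{thm_BG} is what guarantees that this count is exactly the multiplicity of the class $\mathcal C_\gamma$ in the decomposition of $R^{1/n}$.

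Next I would prove the quasi-polynomiality. Clearing denominators, $\#(\lceil\gamma\rceil\cap\tfrac1n\mathbb Z^d)=\#(n\lceil\gamma\rceil\cap\mathbb Z^d)$. The closed cell $\bar\gamma$ is a rational polytope: it is cut out by inequalities $a_i\le\sigma_i(x)\le a_i+1$ with $a_i\in\mathbb Z$, so each of its vertices is the unique solution of $d$ such equations and therefore has coordinates with a common denominator dividing a subdeterminant of $\sigma$. The upper closure $\lceil\gamma\rceil$ is the half-open polytope obtained from $\bar\gamma$ by deleting the facets that lie on the lower hyperplanes $\{\sigma_i=a_i\}$. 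By the Ehrhart--Macdonald theory of half-open rational polytopes (for instance by triangulating $\lceil\gamma\rceil$ into half-open rational simplices and summing, or by combining the closed case with Macdonald reciprocity), the function $n\mapsto\#(n\lceil\gamma\rceil\cap\mathbb Z^d)$ agrees, for every $n\ge1$, with a quasi-polynomial $q_\gamma(n)$ of degree $d$ with rational-valued periodic coefficients, whose leading coefficient is $\operatorname{vol}(\lceil\gamma\rceil)=\operatorname{vol}(\bar\gamma)=\operatorname{vol}(\gamma)$. This already gives $q_\gamma(n)=\operatorname{vol}(\gamma)\,n^d+c_{d-1}(n)\,n^{d-1}+\cdots$ with the $c_i$ a priori only periodic.

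The remaining, and most delicate, point is that $c_{d-1}$ is a genuine constant $b_\gamma$, so that $\operatorname{vol}(\gamma)\,n^d+b_\gamma\,n^{d-1}$ absorbs all the non--lower-order information and the rest collects into a quasi-polynomial $\tilde q_\gamma$ of degree $\le d-2$. Here I would use that every facet of $\bar\gamma$ spans a \emph{lattice hyperplane}: it lies on some $\{\sigma_i=a_i\}$ or $\{\sigma_i=a_i+1\}$ with $\sigma_i$ a primitive integral functional and $a_i\in\mathbb Z$, so for every $n$ the dilate $n\cdot(\text{facet})$ lies on a hyperplane $\{\sigma_i=\text{const}\}$ that contains a full coset of the rank-$(d-1)$ lattice $\mathbb Z^d\cap\ker\sigma_i$. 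Consequently the Ehrhart function of each facet, and of each intersection of facets, has leading term equal to its relative volume with no periodic correction. Expanding $\#(n\lceil\gamma\rceil\cap\mathbb Z^d)$ by inclusion--exclusion over the deleted facets and their intersections, the contributions to the $n^{d-1}$-coefficient come only from $\bar\gamma$ itself and from its facets, and they sum to the \emph{constant} $b_\gamma=\tfrac12\big(\sum_{F\ \mathrm{kept}}\operatorname{relvol}(F)-\sum_{F\ \mathrm{deleted}}\operatorname{relvol}(F)\big)$; all genuinely periodic behaviour is pushed down into the $n^{d-2}$-terms and below, which defines $\tilde q_\gamma$. I expect this last step — pinning down that the lattice-hyperplane property of the facets forces the $n^{d-1}$-coefficient of a half-open cell to be constant, with the correct accounting of kept versus deleted facets — to be the main obstacle, whereas $(a)$ and the plain quasi-polynomiality in $(b)$ are essentially unwindings of Theorem~\ref{thm_BG} and of standard Ehrhart theory.
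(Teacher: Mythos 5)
Your proposal is correct in outline and in fact supplies more detail than the paper does: this is a quoted theorem of Bruns, and the survey's own justification consists of (i) the one-line remark that $I_c\cong C_{\gamma}$ exactly when $(-c+\mathbb Z^d)\cap\lceil\gamma\rceil\neq\emptyset$, (ii) an appeal to the generalized Ehrhart theorem for the quasipolynomiality of $\#(n\lceil\gamma\rceil\cap\mathbb Z^d)$, and (iii) the bare assertion that Bruns ``refines'' Ehrhart's theorem to make the second coefficient constant. Your treatment of the first identity matches (i) but adds the genuinely needed observation that each $\mathbb Z^d$-coset meets $\lceil\gamma\rceil$ in at most one point (via $\lceil\sigma(x)\rceil=\lceil\sigma(x')\rceil$ forcing $|\sigma_i(x)-\sigma_i(x')|<1$ and injectivity of $\sigma$); without that, ``number of classes hitting $\lceil\gamma\rceil$'' and ``number of points of $\frac1n\mathbb Z^d$ in $\lceil\gamma\rceil$'' need not agree. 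Your part (b) is the same Ehrhart appeal as the paper's.

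For the constancy of $b_\gamma$, where the paper gives no argument at all, your route --- all facets of $\bar\gamma$ lie on the integral hyperplanes $H_{i,z}=\{\sigma_i=z\}$ with $\sigma_i$ primitive, hence every facet Ehrhart function has constant leading coefficient equal to its relative volume, and inclusion--exclusion over the deleted lower facets then controls the $n^{d-1}$ term --- is the right idea and is essentially how Bruns argues. The one step you should shore up is the claim that the $n^{d-1}$-coefficient of the \emph{closed} cell $E_{\bar\gamma}(n)$ is itself the constant $\frac12\sum_F\operatorname{relvol}(F)$. Macdonald reciprocity alone only yields $c_{d-1}(n)+c_{d-1}(-n)=\sum_F\operatorname{relvol}(F)$, which is compatible with a periodic odd perturbation of $c_{d-1}$; to conclude constancy you should invoke McMullen's theorem that the period of the $i$-th Ehrhart coefficient of a rational polytope divides its $i$-index (the least $k$ for which the affine hull of every $i$-face of $kP$ contains lattice points), which equals $1$ for $i=d-1$ here precisely because of the lattice-hyperplane property you identified. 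With that citation (or your alternative of triangulating $\lceil\gamma\rceil$ into half-open simplices whose facet hyperplanes are the $H_{i,z}$), the argument is complete.
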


The first statement in Theorem~\ref{thm_nu} holds due to the fact that $I_c \cong C(-c)$ if and only if 
$(-c + \mathbb Z^d) \cap \lceil \gamma \rceil \neq \emptyset$.  
Given this, the relation of $\nu_{\gamma}(n)$ to a quasipolynomial $q_{\gamma}(n)$ is not a surprise.
In fact, since all the vertices of $\lceil \gamma \rceil$ have rational coordinates, we observe that 
\[\#( \lceil \gamma \rceil  \cap \frac{1}{n}\mathbb Z^d) = 
   \#( (n \cdot \lceil \gamma \rceil)  \cap \mathbb Z^d) , \]
where $  (n \cdot \lceil \gamma \rceil)$ is the region in $\mathbb R^d$ whose vertices are $n$ times those of 
$\lceil \gamma \rceil$. Therefore, $\nu_{\gamma}(n)$ counts the number of lattice points in the dilated 
$\lceil \gamma \rceil$. This is exactly the generalized Ehrhart function which will be described in Section~\ref{affineEhrhart}. 
And that $\nu_{\gamma}(n)$ is a quasipolynomial is a direct consequence of  the a polycell version of Theorem~\ref{genEhrhart} which will be explained also in the next section. It has been known that the leading coefficient of the generalized Ehrhart polynomial can be described by the volume and hence it is a constant. The remaining coefficients 
are periodic functions in $n$. However, in the setting of the previous Theorem~\ref{thm_nu}, Bruns further refines 
the results from Theorem~\ref{genEhrhart} and identifies that the second coefficient is also a constant. This refinement 
leads to the existence of the second coefficient of the Hilbert-Kunz function in the following Theorem~\ref{BGhk} that matches the more general case of normal domains proved in \cite{HMM} as discussed in Subsection~\ref{sub_classgp}. 

The BG decomposition takes place for any positive integer $n$. However, if the coefficient field $\kappa$ has positive characteristic $p$, we may restrict $n$ to be powers of $p$ to obtain Hilbert-Kunz functions. In what follows, we use 
$\mu_R(\cdot)$ to denote the minimal number of generators of an $R$-module. 

\begin{theorem}\cite[Corollary~3.2(a)]{Bru1}\label{BGhk}
Let $\kappa$ be an algebraically closed field of characteristic $p >0$ and $M$ a normal finitely generated monoid of rank $d$. Set $R=\kappa[M]$ and let $\pp m$ be the maximal ideal of $R$ generated by the monomials whose exponents are in 
$M \backslash \{0\}$. 
Let $\gamma$ be a 
full-dimensional cell as described above and $C_{\gamma}$ be the conic ideal corresponding to $\gamma$. Then the Hilbert-Kunz function of $R$ with respect to $\pp m$ is a quasipolynomial with constant leading and second coefficients. Precisely 
\begin{equation}\label{eq_BGhk}
 \hk{R}{e} = \displaystyle{\sum_{\gamma}} \,\, \mu_R(C_{\gamma}) \,\nu_{\gamma}(p^e) 
\end{equation}
for all positive integers $e$. 
\end{theorem}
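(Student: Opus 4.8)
The plan is to reduce the Hilbert-Kunz function to a purely combinatorial quantity, the minimal number of generators $\mu_R(R^{1/p^e})$ of the overring obtained by adjoining $p^e$-th roots of monomials, and then to feed in three ingredients that are already available: the bijections of Theorem~\ref{thm_BG}, the $R$-module decomposition of $R^{1/n}$ recorded just before Theorem~\ref{thm_nu}, and the quasipolynomial count $\nu_\gamma$ of Theorem~\ref{thm_nu}. Thus the argument splits into: (i) $\hk{R}{e}=\mu_R(R^{1/p^e})$; (ii) $\mu_R(R^{1/p^e})=\sum_\gamma\mu_R(C_\gamma)\nu_\gamma(p^e)$; (iii) quasipolynomiality with constant top two coefficients.

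\emph{For (i):} Put $q=p^e$. Since $\kappa$ is algebraically closed, hence perfect, $R^{1/q}=\kappa[\frac{1}{q}M]$ is exactly the ring obtained from $R$ by adjoining all $q$-th roots, and the $q$-th power map is a ring isomorphism $R^{1/q}\cong R$ under which the subring $R=\kappa[M]$ of $R^{1/q}$ corresponds to the subring $R^{[q]}=\kappa[qM]$ of $R$. Hence $R^{1/q}$, as an $R$-module, is isomorphic to ${}^e\!R$, and this isomorphism takes the extension ideal $\pp m R^{1/q}$ to the ideal generated by the $q$-th powers of the monomial generators of $\pp m$, namely $\pp m^{[q]}$. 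Therefore $R^{1/q}/\pp m R^{1/q}\cong R/\pp m^{[q]}$ as $\kappa$-vector spaces; since $R^{1/q}$ is module-finite over $R$ ($M$ being a finitely generated normal monoid) and $R/\pp m=\kappa$, graded Nakayama gives
\[
\mu_R\!\big(R^{1/q}\big)=\dim_\kappa\!\big(R^{1/q}/\pp m R^{1/q}\big)=\dim_\kappa\!\big(R/\pp m^{[q]}\big)=\ell_R\!\big(R/\pp m^{[q]}\big)=\hk{R}{e}.
\]

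\emph{For (ii) and (iii):} By \cite[Theorem~3.2(a)]{BruG}, $R^{1/q}=\bigoplus_{c}I_{c}$ as $R$-modules, with $c$ ranging over the finite group $(\frac{1}{q}\gp M)/\gp M$ and each $I_{c}$ of rank one. Since tensoring with $R/\pp m$ commutes with finite direct sums, $\mu_R$ is additive, so $\mu_R(R^{1/q})=\sum_{c}\mu_R(I_{c})$. Each $I_{c}$ is isomorphic to a conic divisorial ideal, and Theorem~\ref{thm_BG} indexes the isomorphism classes of these by the full-dimensional cells $\gamma$ of $\bar\Gamma$, while $\mu_R(I_{c})$ depends only on the isomorphism class. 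Collecting the $c$'s according to the cell they determine — there being $\nu_\gamma(q)$ of them by definition — yields the asserted identity $\hk{R}{e}=\sum_{\gamma}\mu_R(C_\gamma)\,\nu_\gamma(q)$ of (\ref{eq_BGhk}). Finally, substituting $\nu_\gamma(n)=q_\gamma(n)=\operatorname{vol}(\gamma)\,n^{d}+b_\gamma\,n^{d-1}+\tilde q_\gamma(n)$ from Theorem~\ref{thm_nu} (with $\operatorname{vol}(\gamma),b_\gamma$ constant and $\deg\tilde q_\gamma=d-2$) and $n=q$, and using that the $\mu_R(C_\gamma)$ are fixed non-negative integers, exhibits $\hk{R}{e}$ as a quasipolynomial in $q$ whose leading coefficient $\sum_\gamma\mu_R(C_\gamma)\operatorname{vol}(\gamma)$ and second coefficient $\sum_\gamma\mu_R(C_\gamma)b_\gamma$ are constants; the leading coefficient is then forced to be $e_{HK}(R)$, consistent with Theorem~\ref{thmMonsky} and the form (\ref{fcn:HMM}).

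\emph{Main obstacle.} The crux — and the only step not already packaged in the cited theorems — is (i), the identification $\hk{R}{e}=\mu_R(R^{1/p^e})$. The points needing care are that the root ring $R^{1/q}$ genuinely coincides with $\kappa[\frac{1}{q}M]$ (this uses $\kappa$ perfect), that the $q$-th power twist sends $\pp m R^{1/q}$ exactly onto $\pp m^{[q]}$, and that $\mu_R(N)$ may legitimately be computed as $\dim_\kappa(N/\pp m N)$ in this setting — which it may, because $R$ is graded with $R/\pp m=\kappa$ and $R^{1/q}$ is module-finite over $R$. Everything past (i) is bookkeeping with Theorems~\ref{thm_BG} and \ref{thm_nu}; in particular, the constancy of the second coefficient of $\hk{R}{e}$ is inherited wholesale from the analogous refinement built into Theorem~\ref{thm_nu} (a polycell strengthening of the Ehrhart-type Theorem~\ref{genEhrhart}), which I take as given.
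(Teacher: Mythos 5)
Your proposal is correct and follows essentially the same route as the paper's proof: identify $R/\pp m^{[q]}$ with $R^{1/q}/\pp m R^{1/q}$ via the Frobenius (using that $\kappa$ is perfect), use Nakayama to equate $\dim_\kappa$ of that quotient with $\mu_R(R^{1/q})$, decompose $R^{1/q}=\oplus_c I_c$ and group the summands by conic class with multiplicity $\nu_\gamma(q)$, then invoke Theorem~\ref{thm_nu} for the quasipolynomial form. Your step (i) merely spells out in more detail the isomorphism the paper takes for granted; the substance is identical.
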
 
\begin{proof} 
Recall $q=p^e$ and $R^{\frac{1}{q}} = \kappa[\frac{1}{q}M]$.  Since $\kappa$ is perfect,  we have the following 
$\kappa$-algebra isomorphism induced by the Frobenius map,
\[ R/ \pp m^{[q]} \cong R^{\frac{1}{q}}/ \pp m R^{\frac{1}{q}}. \] 
Obviously $\dim_{\kappa} R/ \pp m^{[q]} = \dim_{\kappa} R^{\frac{1}{q}}/ \pp m R^{\frac{1}{q}} 
  \leq \mu_R(R^{\frac{1}{q}}) $. 
By Nakayama's Lemma, the reverse of the last inequality also holds. Hence we have 
$ \dim_{\kappa} R^{\frac{1}{q}}/ \pp m R^{\frac{1}{q}} =  \mu_R(R^{\frac{1}{q}})$.

We recall also that $I_c$'s are the rank one direct summands in the decomposition of $R^{\frac{1}{q}}$ as $R$-modules. 
Therefore, we have 
\[ \begin{array}{lcl}
 \dim_{\kappa} R/ \pp m^{[q]} 
   =  \mu_R(R^{\frac{1}{q}})  
  & = & \mu_R( \oplus_c I_c )\\
  & = &  \sum_c \mu_R(C_{\gamma}), \,\,\, \text{ (since $I_c \cong C_{\gamma}$ for some $\gamma$) } \\
  & = &  \sum_{\bar{\gamma}}   \mu_R(C_{\gamma}) \cdot \#(\text{$I_c$ isomorphic to $C_{\gamma}$}) \\
  & = &  \sum_{\bar{\gamma}}   \mu_R(C_{\gamma})  \, \nu_{\gamma}(q) .
  \end{array} \]
Hence 
\[ \hkq{R}{q} = \ell_R(R/ \pp m^{[q]})  =   \dim_{\kappa} R/ \pp m^{[q]}  
   =  \sum_{\bar{\gamma}}   \mu_R(C_{\gamma})  \, \nu_{\gamma}(q) .  \]
Moreover since $\mu_R(C_{\gamma})$ is a constant independent from $q$, it is clear that $\hkq{R}{q}$ has all the properties enjoyed by $\nu_{\gamma}(q)$ in Theorem~\ref{thm_nu}, 
namely, $ \hkq{R}{q}$ is a quasipolynomial and that its leading and the second coefficients are both constants.
\end{proof} 

We will return to (\ref{eq_BGhk}) after the discussion of Ehrhart Theory in Section~\ref{affineEhrhart} (see Remark~\ref{rmk_BG}).

The next example describes the BG decomposition of the semigroup generating the rational normal cone of degree 2. 
This is a special case of Example~\ref{regcone} with $g=2$. 

\begin{example}\label{ex_BG} 
Let $R=k[s, st, st^2]$ where $M$ is the semigroup in $\mathbb Z^2$ generated by $(1,0)$, $ (1,1)$, $(1,2)$. 
Then $\sigma$ is the linear map $\sigma=(d_2, 2d_1-d_2): M \rightarrow \mathbb Z^2$. 
Note that in this case, $M$ generates a simplicial cone in $\mathbb R^2$. 
The slanted grids in Figure~1 below show the decomposition $\Gamma$ of $\mathbb R^2$. 
Full dimensional cells that are equivalent in $\overline \Gamma$ are labeled by $\bigcirc$ and $\triangle$ respectively just to show a few examples. Note that there are two distinct classes. We also show the graph of a single open cell $\gamma$ and its closure $\lceil \gamma \rceil$. 

\medskip

\begin{center} 
  \begin{tabular}{ccccc}
  \begin{minipage}{1.5in} 
       \begin{center}      \epsfig{file=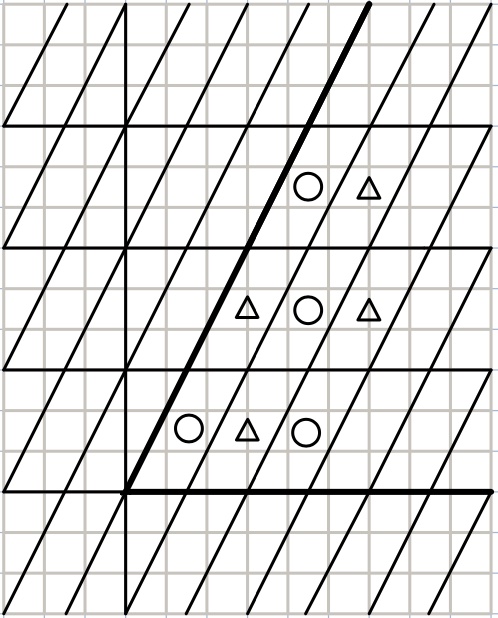, height=1.5in}  \end{center} 
  \end{minipage} &  \hspace{.2in} & 
     \epsfig{file=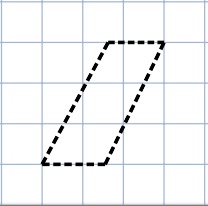, height=.4in}   
     & \hspace{.1in} &
      \epsfig{file=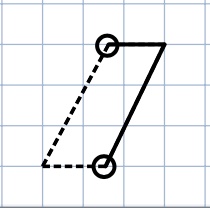, height=.4in}  \\
   {\em {\small $\Gamma$ and $\overline \Gamma$}} &   \hspace{.1in} & {\em {\small One open cell $\gamma$ }}  & \hspace{.1in} &  { \em {\small Closure $\lceil \gamma \rceil $ }}
    \end{tabular} \\  
   \vspace{3mm}
{\small {\em Figure}~1: $\Gamma$ and the equivalence classes in $\overline \Gamma$.}
\end{center}

Let $\mathcal P = \{ x \in \mathbb R_{\geq 0}M | x \notin {\bf d} + \mathbb R_{\geq 0} M \text{ for any nonzero } {\bf d} \in M\} $.
Then $\mathcal P$ can be tessellated by finitely many full-dimensional cells. 
Each non-equivalent cell may occur in a different number of copies. (See {\em Figure}~2.) 
For some $\gamma$, $P$ may contain its entire closure  but not always. 
Such decomposition is unique in the sense that each full dimensional cell in $\bar \Gamma$ is 
 identified with a unique conic ideal class. 
The closure of these  semi-open  rational polycells $\lceil \gamma \rceil $ are convex.

\begin{center} 
 \begin{tabular}{c} 
    \epsfig{file=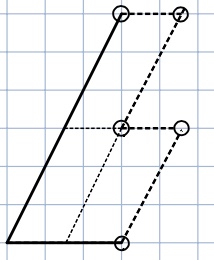, height=.8in}   \\ 
    {\small {\em Figure} 2: $\mathcal P$ by $\overline \Gamma$ }
 \end{tabular}    
\end{center}

We write $\gamma_1 $ for a cell labeled by $\bigcirc$ and $\gamma_2$ for one labeled by $\triangle$.
By Theorem~\ref{BGhk}, 
\[ \hk{R}{e} = \mu_R(C_{ \gamma_1 }) \,\nu_{\gamma_1}(p^e)  +  \mu_R(C_{ \gamma_2 }) \,\nu_{\gamma_2}(p^e) . \]
Next we determine the values for 
$\mu_R(C_{ \gamma_1 })$ and $\mu_R(C_{ \gamma_2 })$ which are independent from $e$. 
In fact, since $C_{\gamma}$ is a class defined by $I_c$ for some $c \in \lceil \gamma \rceil$, 
to obtain the value for $\mu_R(C_{\gamma})$, 
it suffices to consider the minimal number of generators of a divisorial ideal $I_c$.
Let $c=(-\frac{2}{3}, -\frac{2}{3}) \in \lceil \gamma_1 \rceil$, then $I_c \cong C(-c) \cong R\cdot st$ and 
$\mu_R(C_{\gamma_1}) =1$. 
And if $c= (0, - \frac{1}{3}) \in \lceil \gamma_2 \rceil$, then $I_c \cong C(-c) \cong R\cdot st + R\cdot st^2 $ and 
$\mu_R(C_{\gamma_1}) =2$.
\end{example}
 
In Section~\ref{affineEhrhart}, we will see that the Hilbert-Kunz function of an affine semigroup ring $R$ is equivalent to an Ehrhart function
which counts the number of lattice points in a certain dilated rational {\em polycell} $\mathcal P$ arising from $R$ 
which is nonconvex and not necessarily closed. 
We will discuss $\hk{R}{e}$ from this aspect there. On the other hand, when it comes to counting lattice points in a polycell (or a polytyope), there is no previously known canonical method about dissecting the polycell ({\em resp.} polytope) before proceeding to count. Via conic divisor classes, Bruns~\cite{Bru1} decomposes $\mathcal P$ into finitely many cells and each appears finitely many times. The BG decomposition is unique for any given affine semigroup. 
The periodic behavior of the Hilbert-Kunz function is mainly due to the fact that the vertices of full-dimensional open cells $\gamma$  have rational coordinates. 
Even though the polycell $\mathcal P$ is not convex, the full-dimensional cells in the BG decomposition are always convex. These facts can be very useful.

In order to fully express $\hk{R}{e}$ using BG decompositions,   it is equally important to know 
$\mu(C_ \gamma)$ for each $\overline \gamma$ in $\bar{\Gamma}$ (see Remark~\ref{twoHK} and Question~\ref{computation}). 
The significance of $\mu( C_\gamma)$ can also be seen in \cite{BruG} in which Bruns and Gubeladze use it to measure the number of Cohen-Macaulay divisor classes and prove that there are only finitely many such divisor classes.  

Finally we make a remark on the divisor class group. 
\begin{remark}\label{rmk:divisor}
We have seen that the divisor class group appears often in these techniques. First it is used in proving the existence of $\beta$ in Subsection~\ref{sub_classgp}. Then we see that the representation of the divisor classes play the most crucial role in the discussion of the periodic behaviors in the current subsection. 
The divisor classes also occur implicitly in Subsection~\ref{sub_sheaf}, because the divisor class group is generated  by the twists of the structure sheaves which are the main object dealt with in Theorem~\ref{geoBrenner}.  
So perhaps some level of finiteness condition on the divisor class group 
is responsible for the (periodic) behavior of the function. This remains to be investigated. 
\end{remark}

\subsection{Via Combinatorics}\label{sub_comb} 

A lot of attention has been given to quotients of polynomial rings by monomial or binomial ideals. 
In this subsection, we give a very brief account in this direction without developing their details. 
These can be traced back to Conca~\cite{Co1996} who considered generalized Hilbert-Kunz functions and utilized Gr\"obner basis to calculate the multiplicity of binomial hypersurfaces. Watanabe~\cite{Wat}, followed by Eto~\cite{Et}, 
approximated the multiplicity by the volume of the relevant polytope. 
(Our discussions on affine semigroup ring settings were initially inspired by \cite{Wat}.)
Eto and Yoshida~\cite{EtY} expressed the multiplicity in terms of the Stirling numbers.

Combining the techniques of Segre products as done in \cite{EtY} and Gr\"obner bases, Miller and Swanson~\cite{MiS2013} compute the Hilbert-Kunz function of the rings of the form of 
$R = \kappa [ X ] / I_2( X) $ where $X$ is an $m\times n$ matrix of indeterminates, 
$\kappa[X]$ is the polynomial ring joining all indeterminates in $X$,
and  $I_2(X)$ is the ideal generated by all the $2 \times 2$ minors of $X$. 
In \cite{MiS2013}, it is proved that the Hilbert-Kunz functions, for arbitrary $m$ and $n$, are true polynomials 
and are expressed in a recursive manner, but a closed form is provided only for the special case $m=2$. 
This study is extended by Robinson and Swanson~\cite{RoSw2015} who give explicit closed polynomial forms 
for all positive integers $m$ and $n$.

Before ending this subsection, we mention an interesting article by Batsukh and Brenner~\cite{BaB2017} in which
the notion of {\em binoid} is introduced as a commutative monoid with an absorbing element $\infty$.  
For rings of combinatorial natures such as Stanley-Reisner and toric rings and those just described above,
several questions arise. These include the rationality of multiplicity, the interpretation of the notions in the case of 
characteristic 0, and the dependence of the results on the characteristic. 
Batsuhk and Brenner propose a unifying method evolved around binoids to answer all these questions 
 simultaneously. 
 It will be interesting to further investigate if this new approach may be applied to obtain or approximate Hilbert-Kunz functions.

\section{Normal Affine Semigroup Rings and Ehrhart Theory}~\label{affineEhrhart}

In this section, we describe the generalized Ehrhart function and relate it to the Hilbert-Kunz function of an affine semigroup ring. 
Below, we give a precise definition of affine semigroup rings focusing on identifying monoid elements with
monomials of a Laurent polynomial ring. 
In this way, affine semigroup rings under consideration in Subsection~\ref{sub_BG} are naturally subrings of polynomial rings. 
Furthermore, it can be understood as the coordinate ring of an affine toric variety. 

A {\em monoid} in $\mathbb Z^d$ is a semigroup that contains an identity element. Let $M$ denote a monoid that can be generated by finitely many elements in $\mathbb Z^d$. We call such $M$ an affine semigroup.
An affine semigroup $M$ is {\em positive} if $M \cap - M = 0$; namely, the only invertible element in $M$ is the identity element. 
Let  $ \mathbb Q_{\geq 0} M$ denote the rational cone generated by $M$.  Saying an affine semigroup is  positive is equivalent to saying that its rational cone is {\em strongly convex} or {\em pointed}. 
For the discussions within this paper, an {\em affine semigroup} always stands for a monoid that is finitely generated and positive.
Let $\gp M$ denote the subgroup of $\mathbb Z^d$ generated by $M$. 
Obviously $M\subseteq \gp M \cap \mathbb Q_{\geq 0}M$. 
An affine semigroup $M$ is {\em normal} if equality holds, that is, for any $h \in \gp M$, 
we have $h \in M$ whenever $n h \in M$ for some positive integer $n$. 
Pictorially, $M$ is normal means that there is no ``hole" when 
comparing $M$ against $\gp M \cap \mathbb Q_{\geq 0} M$. 
We will assume that $M$ is normal in this section although some of the results might be true in more general settings. 

Let $\kappa$ be an algebraically closed field of any characteristic.
We write $\kappa[M]$ for the {\em affine semigroup ring} generated by $M$ over $\kappa$.  
By identifying $(i_1, \dots, i_d) \in \mathbb Z^d$ with $t_1^{i_1} \cdots  t_d^{i_d}$, 
we regard $\kappa[M]$ as a $\kappa$-subalgebra in the Laurent polynomial ring 
$S=\kappa[t_1, \dots, t_d, t_1^{-1}, \dots,  t_d^{-1} ]$.
For the purpose of our discussion, it is more convenient to work with affine semigroup rings under such
identification. However, for notational simplicity, we still write $\kappa[M]$ 
in place  of the formal $\kappa[\mathbf t^{M}]$.
It is known that $\kappa[M]$ is a normal ring if and only if the semigroup $M$ is normal (\cite{Ho1972}). 
Without loss of generality, we also assume  that $\gp M$ has rank $d$ (otherwise, replace $d$ by the rank of $\gp M$),
and that $\gp M$ equals the ambient group $\mathbb Z^d$.
Thus $\dim \kappa[M] =d$. 

Let $R=\kappa[M]$ be an affine semigroup ring as described above. Then $R$ can be viewed as a $k$-algebra generated by finitely many monomials in a Laurent polynomial ring $S$. Clearly $R$ is an integral domain. Sometimes $R$ is also called a {\em toric ring}, since it is  the coordinate ring of an affine toric variety that is constructed from lattice points in $M$~({\em c.f.} \cite{CLS,F2,MSt}). 
For instance, a convex polyhedral cone $\sigma \subset (\mathbb R^d)^{\vee}$  
defines an affine toric variety $X=\spec R$ where $R=\kappa[\sigma^{\vee} \cap \mathbb Z^d]$.
In this convention, $\sigma^{\vee} = \mathbb R_{\geq 0} M$ is an extended cone of the rational cone
$\mathbb Q_{\geq 0} M$. It can be proved that $R$ is isomorphic to the quotient of a polynomial modulo a prime ideal generated by binomials. 
Conversely a prime ideal generated by binomials always defines a toric ring and thus a semigroup ring 
({\em c.f.} \cite[Chapter~1]{CLS}).

Affine semigroup rings have all the above classical interpretations making them an interesting family of rings to study for any topic in both algebra and geometry. Next, after a brief introduction of Ehrhart theory, 
we will see how the polytopes and their enclosed lattice points interact with ideals of finite colength. 

An Ehrhart function concerns the number of lattice points contained in a dilated polytope. 
A {\em polytope}, denoted by $\Delta$, is the convex hull of finitely many points in $\mathbb R^N$. 
Precisely, for a given finite set of points $\{ {\bf v}_0, \dots, {\bf v}_g \} \subset \mathbb R^N$, 
the convex hull of ${\bf v}_0, \dots, {\bf v}_g $ is given by
$\Delta = \{ \lambda_0 {\bf v}_0 + \cdots + \lambda_g {\bf v}_r | 0 \leq \lambda_i \leq 1, \lambda_0 + \cdots \lambda_g =1\} \subset \mathbb R^N$. 
The dimension of $\Delta$ is equal to 
$\dim_{\mathbb R} \rm{Span}\{{\bf v}_1- {\bf v}_0, \dots, {\bf v}_g - {\bf v}_0 \}$. 
A $d$-polytope refers to a $d$-dimensional polytope. 
And $ n \Delta$ denotes the convex hull of $\{ n {\bf v}_0, \dots, n{\bf v}_g \}$; that is also a dilation of $\Delta$. 
The {\em lattice-point enumerator} ({\em c.f.} \cite[p.27]{BeR}) for the $n$-th dilation of $\Delta$ is defined to be 
\[ E_{\Delta}(n) = \# \left ( n \Delta \cap \mathbb Z^N \right ) . \]
We say that $\Delta$ is an {\em integral polytope} if all its vertices have integer coordinates and it is a {\em rational polytope} if 
its vertices have rational coordinates. The following theorem can be found in \cite[Theorems~3.8 and 5.6]{BeR}.

\begin{theorem}[Ehrhart~\cite{Ehr}]\label{Ehrhart} Let $\Delta$ be an integral $d$-polytope in $\mathbb Z^N$
and $n$ be a positive integer. 
\begin{enumerate} 
\item[(a)] $E_{\Delta}(n)$ is a  polynomial function in $n$ of degree $d$.
\item[(b)] If we write $E_{\Delta}(n) = a_d n^d + a_{d-1} n^{d-1} + \cdots + a_0$, then 
\[ a_d = \mbox{ the volume of the polytope $\Delta$, }  \]
\[ a_{d-1} = \frac{1}{2}  \cdot E_{( \partial(n\cdot \Delta) \cap \mathbb Z^N )} (n),  \] 
\[ a_0 = \mbox{ the Euler characteristic of $\Delta$},  \]
where $\partial$  indicates the boundary of the polytope.
\end{enumerate} 
\end{theorem}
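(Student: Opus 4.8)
The plan is to pass to the cone over $\Delta$ and read off everything from one rational generating function. Embed $\Delta$ at height one, set $\Delta' = \Delta \times \{1\} \subset \mathbb{R}^{N+1}$, and let $C = \mathbb{R}_{\geq 0}\Delta'$ be the cone it spans; then a lattice point of $C$ whose last coordinate is $n$ is precisely a point of $n\Delta \cap \mathbb{Z}^N$, so grading $\mathbb{Z}^{N+1}$ by the last coordinate gives $\sum_{n \geq 0} E_\Delta(n)\, t^n = \sigma_C(t)$, the Hilbert series of the affine semigroup $C \cap \mathbb{Z}^{N+1}$. First I would triangulate $\Delta$ into integral $d$-simplices $\Delta_1, \dots, \Delta_r$ using only the vertices of $\Delta$, lift each to a simplicial cone over $\Delta_j \times \{1\}$, and assemble the generating functions by inclusion--exclusion over the faces of the triangulation. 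For a simplicial cone on integral generators, tiling by translates of the half-open fundamental parallelepiped gives $\sigma_{C_j}(t) = h_j(t)/(1-t)^{d+1}$ with $h_j$ a polynomial; the crucial point, and the reason vertex-integrality is indispensable, is that every ray generator has last coordinate $1$, so the denominator is a pure power of $(1-t)$ rather than a product of factors $(1-t^k)$. Hence $\sum_n E_\Delta(n) t^n = h(t)/(1-t)^{d+1}$ with $\deg h \leq d$, and expanding in partial fractions gives part~(a): $E_\Delta(n)$ agrees for every $n \geq 0$ with a polynomial of degree at most $d$.

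Next I would identify the three coefficients. A Riemann-sum estimate gives $E_\Delta(n)/n^d \to \operatorname{vol}(\Delta)$, so $a_d = \operatorname{vol}(\Delta)$ and $\deg E_\Delta = d$ exactly. For the constant term, the generating function shows the polynomial takes the value $E_\Delta(0) = \#(\{0\} \cap \mathbb{Z}^N) = 1$ at $n = 0$, and a nonempty convex polytope is contractible, so $a_0 = 1 = \chi(\Delta)$. The second coefficient needs genuine input: I would invoke Ehrhart--Macdonald reciprocity $E_{\Delta^\circ}(n) = (-1)^d E_\Delta(-n)$ (this is \cite[Theorem~5.6]{BeR}, itself a consequence of Stanley reciprocity for the cone semigroup of $C$). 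Writing $L_{\partial\Delta}(n) = \#(\partial(n\Delta) \cap \mathbb{Z}^N) = E_\Delta(n) - E_{\Delta^\circ}(n)$ and substituting $E_\Delta(n) = \sum_i a_i n^i$ yields $L_{\partial\Delta}(n) = \sum_i a_i\bigl(1 - (-1)^{d-i}\bigr) n^i$; the $n^d$ terms cancel and the coefficient of $n^{d-1}$ equals $2a_{d-1}$, which is the relation recorded in part~(b) between $a_{d-1}$ and the boundary lattice-point enumerator. (That $L_{\partial\Delta}$ is itself a polynomial in $n$, of degree $d-1$, follows by applying part~(a) and inclusion--exclusion to $\partial\Delta$, a union of integral $(d-1)$-polytopes.)

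The main obstacle is Ehrhart--Macdonald reciprocity: proving it cleanly requires either Stanley's reciprocity theorem for rational cones or a carefully chosen half-open decomposition of the triangulation so that the interior and boundary lattice counts line up, and this is where most of the combinatorial bookkeeping sits. A secondary subtlety is the triangulation itself --- one must verify that faces shared by adjacent simplices are counted with the correct signs and that relative interiors are disjoint --- but once the rational form of the Ehrhart series is in hand, the passage to polynomiality and to the three coefficients is routine.
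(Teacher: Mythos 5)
Your argument is correct, and it is essentially the standard proof from Beck--Robins \cite[Theorems~3.8 and 5.6]{BeR}, which is precisely the source the paper cites for this theorem; the paper itself quotes the result without proof. Your coning/triangulation/partial-fractions route to (a), the Riemann-sum identification of $a_d$, the evaluation $E_\Delta(0)=1=\chi(\Delta)$ for $a_0$, and the derivation of $2a_{d-1}$ as the top coefficient of the boundary enumerator via Ehrhart--Macdonald reciprocity all check out (and your reading of the loosely worded formula for $a_{d-1}$ in the statement is the intended one).
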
 

Whether or not other coefficients carry any significant information is unknown.

Ehrhart's Theorem can be generalized to rational polytopes. A function $f(n)$ on $\mathbb Z$ or $\mathbb Z_{\geq 0 }$ is called a {\em quasipolynomial}  of period $\tilde r$ if it can be written in polynomial form in $n$ whose coefficients are periodic functions in $n$ with $\tilde r$ as the least common multiple of their periodic lengths. Equivalently, there exist $\tilde r$ polynomials $f_1, \dots, f_{\tilde r}$ such that $f(n) = f_i(n)$ if 
$n \equiv i \pmod{\tilde r}$.

\begin{theorem}[Generalized Ehrhart's Theorem~\cite{Ehr}]\label{genEhrhart} 
Assume that $\Delta$ is a convex rational $d$-polytope. Then $E_{\Delta}(n)$ is a quasipolynomial in $n$ of degree $d$. Moreover, the period divides the least common multiple of the denominators of the coordinates of the vertices of $\Delta$. 
\end{theorem}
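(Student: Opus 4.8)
The plan is to reduce the statement to the case of a simplex and then treat that case by passing to the cone over the simplex and reading off the Ehrhart series as an explicit rational function. First I would fix a triangulation of $\Delta$ that uses only the vertices ${\bf v}_0,\dots,{\bf v}_g$ of $\Delta$ (a pulling triangulation) and then refine it to a decomposition $\Delta=\sigma_1\sqcup\cdots\sqcup\sigma_s$ into finitely many pairwise disjoint \emph{half-open} simplices, each of whose vertices lie among the ${\bf v}_j$; this is routine (process the top simplices in some order and delete from each the facets it shares with earlier ones). Since the decomposition is compatible with dilation, one gets $E_\Delta(n)=\sum_{j}\#\bigl(n\sigma_j\cap\mathbb Z^N\bigr)$ for every positive integer $n$, with no inclusion--exclusion needed. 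It then suffices to show, for each half-open rational $d$-simplex $\sigma$ with vertices $w_0,\dots,w_d$, that $n\mapsto\#(n\sigma\cap\mathbb Z^N)$ agrees for all positive integers $n$ with a quasipolynomial of degree $\le d$ whose period divides the least common multiple of the coordinate denominators of $w_0,\dots,w_d$; summing over $j$ then yields a quasipolynomial of degree $\le d$ whose period divides the least common multiple $r$ of all coordinate denominators of ${\bf v}_0,\dots,{\bf v}_g$. Finally, that the degree is exactly $d$ with leading coefficient $\mathrm{vol}(\Delta)$ follows by scaling: $r\Delta$ is an integral polytope, so $E_\Delta(rm)=E_{r\Delta}(m)$ is a polynomial of degree $d$ with leading coefficient $\mathrm{vol}(\Delta)$ by Theorem~\ref{Ehrhart}, which already pins down the degree of the quasipolynomial.

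For a half-open rational $d$-simplex $\sigma$ with vertices $w_0,\dots,w_d$, let $\delta_i$ be the least common multiple of the denominators of the coordinates of $w_i$, and consider the pointed simplicial cone $C=\mathbb R_{\ge 0}(w_0,1)+\cdots+\mathbb R_{\ge 0}(w_d,1)\subseteq\mathbb R^{N+1}$, whose lattice points with last coordinate $n$ are in bijection with $n\sigma\cap\mathbb Z^N$. On the ray through $(w_i,1)$ let $u_i$ be the primitive lattice vector, obtained from $(\delta_i w_i,\delta_i)\in\mathbb Z^{N+1}$ by dividing by the greatest common divisor of its entries; its last coordinate $a_i$ is then a positive integer with $a_i\mid\delta_i$. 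Every lattice point of $C$ is written uniquely as $p+\sum_{i=0}^d c_iu_i$ with $c_i\in\mathbb Z_{\ge 0}$ and $p$ in the (correspondingly half-open) fundamental parallelepiped $\Pi$ spanned by $u_0,\dots,u_d$, and $\Pi\cap\mathbb Z^{N+1}$ is a finite set each of whose elements has last coordinate at most $\sum_i a_i\le(d+1)r$. Grouping the lattice points of $C$ by last coordinate gives the Ehrhart series
\[
\sum_{n\ge 0}\#\bigl(n\sigma\cap\mathbb Z^N\bigr)\,s^n \;=\; \frac{\sum_{p\in\Pi\cap\mathbb Z^{N+1}}s^{\,\mathrm{ht}(p)}}{\prod_{i=0}^{d}\bigl(1-s^{a_i}\bigr)},
\]
where $\mathrm{ht}(p)$ denotes the last coordinate of $p$. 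Putting this over the common denominator $(1-s^r)^{d+1}$ with $r=\mathrm{lcm}(a_0,\dots,a_d)$, the numerator becomes a polynomial of degree at most $(d+1)r$, so the expansion $(1-s^r)^{-(d+1)}=\sum_{m\ge 0}\binom{m+d}{d}s^{rm}$ combined with the fact that $\binom{m+d}{d}$ is a polynomial of degree $d$ in $m$ shows, by a routine partial-fractions computation, that the coefficient of $s^n$ equals, for all positive integers $n$, a quasipolynomial in $n$ of degree $\le d$ and period dividing $r$. Since $r\mid\mathrm{lcm}(\delta_0,\dots,\delta_d)$, this is the desired claim for $\sigma$.

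The step I expect to be the main obstacle is the combinatorial bookkeeping behind the half-open decomposition: one must choose the half-open structures on the $\sigma_j$ so that they genuinely tile $\Delta$ (equivalently, so that the half-open simplicial cones tile the cone over $\Delta$), since this is what makes the lattice-point counts additive and, together with the degree bound on the numerators $\sum_{p\in\Pi}s^{\mathrm{ht}(p)}$, what forces the generating-function identity to reproduce $E_\Delta(n)$ for \emph{all} positive integers $n$ rather than merely for $n\gg 0$. The other point requiring care is tracking the period: one needs the primitive ray generator over a vertex of coordinate-denominator $\delta_i$ to have height dividing $\delta_i$ (immediate from its construction), and that clearing to the common denominator $(1-s^r)^{d+1}$ and then summing over the finitely many simplices enlarges the period only up to the least common multiple of the $\delta_i$ and no further. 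Everything else---the bijection between lattice points of a simplicial cone and (parallelepiped point) plus (monoid element), the partial-fractions expansion, and the volume and degree comparison with the integral case---is routine given Theorem~\ref{Ehrhart}.
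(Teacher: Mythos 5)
Your proof is correct, and it is essentially the argument the paper itself relies on: the paper gives no proof of Theorem~\ref{genEhrhart}, deferring to the coning/generating-function proof of Beck and Robins~\cite[Section~3.7]{BeR} (as worked out in \cite{Ba}), which is exactly your route of triangulating into half-open rational simplices, tiling each simplicial cone by translates of the fundamental parallelepiped, and extracting the quasipolynomial from the rational Ehrhart series with denominator $(1-s^r)^{d+1}$. The only point to make fully precise is the half-open decomposition itself (your ``delete facets shared with earlier simplices'' recipe requires a shelling order, or equivalently the standard visibility construction), which you have already flagged as the step needing care.
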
 

Naturally, $ E_{\Delta}(n)$ is called the {\em Ehrhart polynomial} or {\em quasipolynomial} according to 
whether $\Delta$ is an integral or rational polytope. 

The proof of Theorem~\ref{genEhrhart} follows very similarly to that of Theorem~\ref{Ehrhart}, but as pointed out in Beck and Robin~\cite{BeR}, ``{\em  the arithmetic structures of Ehrhart quasipolynomials is much more subtle and less well known than that of Ehrhart polynomials}". A detailed proof following the instructions of \cite[Section~3.7]{BeR} is documented in Barco~\cite{Ba}. 
(See Sam~\cite{Sam2009} for an alternative approach.)

Note that for any polytope $\Delta$, all the faces in its boundary $\partial \Delta$ must be convex.
Let $\Delta^{\circ}=\Delta - \partial \Delta$ be the interior of $\Delta$. 
By the inclusion and exclusion principle and applying Theorem~\ref{Ehrhart} repeatedly, 
one can prove that Ehrhart's Theorem holds for $\Delta^{\circ}$ with $E_{\Delta^{\circ}}(n)$ defined analogously, 
and also for non-convex polytopes and their interiors as well. 
Similarly, Theorem~\ref{genEhrhart} which concerns rational polytopes holds for the boundary $\partial \Delta$, 
interior $\Delta^{\circ}$ and their non-covex counterparts ({\em c.f.} \cite[Section~4.6.2]{St2}).

For the following discussion, we define a {\em polycell} to be a union of finitely many polytopes, but not necessarily containing 
all the faces on the boundary of the union. We also require that if any two polytopes intersect, 
they do so only at their faces. Thus, a polycell is not necessarily convex; neither is it closed nor open. 
We use the term a {\em semi-open polycell} to emphasize that it contains only part of its boundary. 
The Ehrhart Theorem and its generalized version both hold for the semi-open polycells by the inclusion and exclusion principle.

Next we relate the counting of lattice points to $\hk{R}{e}$. We begin by doing so to Hilbert-Samuel functions. 
We use $\angle$ as a shorthand for the cone  $\mathbb R_{\geq 0} M$ in $\mathbb R^d$.
Let $\pp a$ be an ideal generated by monomials in $R=\kappa[M]$ such that $R/ \pp a$ has finite length.  
The Hilbert-Samuel function $\HS(n)= \ell (R/ \pp a^n)$ can be obtained by 
counting the lattice points in the complement of the union of the shifted cones 
$L_n = \bigcup (\bf d + \angle)$, 
where the union runs over the $\bf d$'s corresponding to the generators ${\bf t^d}$ of $\pp a^n$.  
Since  $\ell(R/\pp a^n)$ is finite, $\angle\, \backslash L_n$ is a bounded semi-open region. 
Hence there are only finitely many lattice points in  $\angle\, \backslash L_n$. 
Furthermore, as the number of generators of $\pp a^n$ increases as the power $n$ increases, 
so does the number of vertices of $L_n$. 
Now let $L$ be the convex hull containing all the $\frac{1}{n}L_n := \bigcup ( \frac{1}{n}\bf d + \angle) $ for $n \geq 1$. 
One sees that ${\HS(n)}$ is equal to the number of points in 
$ (\angle \,\backslash \frac{1}{n}L_n) \cap \frac{1}{n} \mathbb Z^d $ and a lattice cube in $\frac{1}{n} \mathbb Z^d $ has 
volume exactly $\frac{1}{n^d}$. Thus 
multiplying $\frac{1}{n^d}$ to $\HS(n)$ gives the Riemann sum that approximates the volume of $\angle \, \backslash L$ 
so that  when $n \rightarrow \infty$, the quantity $\frac{\HS(n)}{n^d}$ converges to the volume of $\angle \, \backslash L$. 
Hence the Hilbert-Samuel multiplicity $i(R, \pp a)$ of $R$ with respect to $\pp a$ 
can be computed using the following volume formula  ({\em c.f.} \cite[Exercise~2.8]{R1})
\begin{equation}\label{HSmulti}
i(R, \pp a) = {d!} \lim_{n \rightarrow \infty} \frac{\HS(n) }{n^d} = d! \, \mathcal Vol (\angle \, \backslash L) . 
\end{equation}

Now we assume that $\charact \kappa = p > 0$ and consider the Frobenius powers of the maximal ideal $\pp m$ 
consisting of all monomials in $R$ other than 1. 
(Similar considerations apply to arbitrary ideals of finite colength.)
Suppose $\pp m$ is generated by ${\bf t}^{{\bf d}_1}, \dots, {\bf t}^{{\bf d}_h}$. 
Then $\pp m^{ [p^e] }$ is generated by $({\bf t}^{{\bf d}_1})^{p^e} , \dots, ({\bf t}^{{\bf d}_h})^{p^e}$.
Let $L = \bigcup_{j=1}^h ({\bf d}_j+\angle)$ be the union of $h$ polyhedral cones. Then 
\[ \ell(R/\pp m^{[p^e]}) = \dim_{\kappa} R/\pp m^{[p^e]} 
  = \# \left ( p^e \cdot ( \angle \, \backslash L)  \bigcap \mathbb Z^d \right ).
 \]
This gives the Hilbert-Kunz function a combinatorial interpretation. More precisely, if we let 
$\mathcal P = \angle \backslash L $.
Then $\mathcal P$ is a semi-open polycell that does not contain its ``upper right" boundary faces. 
In particular, $\mathcal P$ is not necessarily (in fact, almost never) convex and 
$p^e \cdot \mathcal P \cap \mathbb Z^d$ consists of all the monomials in $R$ not in $\pp m^{[p^e]}$. 
The monomials that generate $R/\pp m^{[p^e]}$ as a $\kappa$ vector space are exactly those whose exponents
 are contained in $p^e\cdot \mathcal P$, the polycell $\mathcal P$ dilated by $p^e$.
Hence the values of the Hilbert-Kunz function at $e$ is 
\begin{equation}\label{HKEhrhart}
 \hk{R}{e} = \dim_k R/\pp m^{[p^e]} = \# ( p^e \cdot \mathcal P \cap \mathbb Z^d ). 
\end{equation}

To illustrate the ideas above, 
we use $R= k[s, st, st^2]$ from Example~\ref{ex_BG}. Then $\hk{R}{e} =\ell( R/\pp m^{[p^e] })$ can be
obtained by counting the number of lattice points belonging to the $p^e$-dilated semi-open polycell as shown in Figure~3 below.  

\begin{center} 
  \begin{tabular}{ccc} 
     \epsfig{file=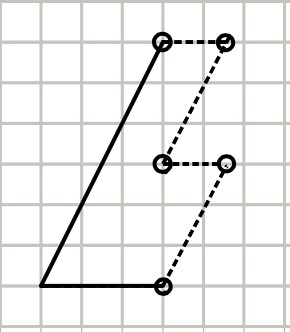, height=.5in} & \hspace{1in} & 
     \epsfig{file=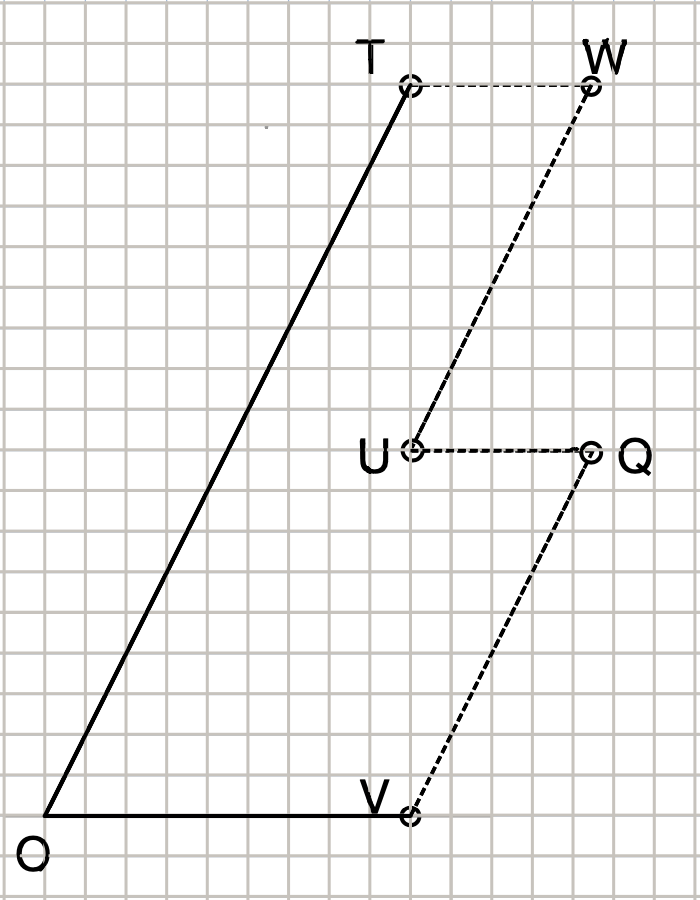, height=1.5in} \\
     {\em {\small $p=3, e=1$. }} & \hspace{1in} &  {\small \em $p=3, e=2$.}  
     \end{tabular} \\  
   \vspace{3mm}

{\small {\em Figure}~3: $p^e \cdot \mathcal P \cap \mathbb Z^2 $. (The labels will be needed in Example~\ref{regcone}.)}
\end{center} 
A complete computation of the Hilbert-Kunz function of this example will be presented in Example~\ref{regcone}.

In general, it is not difficult to see that 
\begin{equation}\label{shrinkP}
\#  \left( p^e \cdot \mathcal P \cap \mathbb Z^d \right ) = 
  \# \left( \mathcal P \cap \frac{1}{p^e} \cdot \mathbb Z^d \right ). 
\end{equation}
Hence similarly to the Hilbert-Samuel multiplicity, one can argue that the limit of 
$\hk{R}{e} $ divided by $(p^e)^d$ tends to the volume of $\mathcal P$. 
This is a geometric illustration of how Watanabe~\cite{Wat} proved the rationality of Hilbert-Kunz multiplicity for affine semigroup rings, 
although his proof does not involve Ehrhart theory. 

It is important to note that the coordinates of the vertices of $\mathcal P$ are not necessarily all integers. 
Therefore as $e$ varies, we are examining about the values of generalized Ehrhart functions which are eventually periodic with predictable period by Theorem~\ref{genEhrhart}. (See Remark~\ref{rmk_example}(a) for comments on periods.)

Next we reflect on the discussion above on Ehrhart theory, Hilbert-Samuel and Hilbert-Kunz functions. 
The value $\HS(n)$ of the Hilbert-Samuel function is the number of  lattice points in $\angle \backslash L_n$.  
Recall that $L_n= \bigcup ({\bf d} + \angle)$ where the union runs over the monomial generators ${\bf t}^{\bf d}$ of $\pp a^n$. 
The vertices of $\angle \backslash L_n$ are integral but its shape changes as $n$ increases 
due to the increasing number of generators. 
This prevents us from taking the advantage of the Ehrhart's Theorem. On the other hand, when working with the Frobenius powers of $\pp m$, the shape of  $p^e \cdot \mathcal P$ remains rigid and is always similar to $\mathcal P$. Each value of the Hilbert-Kunz function $\hk{R}{e}$ is thus exactly the value provided by the Ehrhart function $E_{\mathcal P}(p^e)$
for the $p^e$-th dilation of $\mathcal P$. This allows us to apply the generalized Ehrhart's Theorem~\ref{genEhrhart} for the rational polycell to its full potential. 
Hence 
\[ \hk{R}{e} =E_{\mathcal P}(p^e) , \text{ or equivalently,} \,\,\, \hkq{R}{q} = E_{\mathcal P}(q). \]
We then conclude that $\hkq{R}{q} $ is a quasipolynomial in $q$ 
and its value is given by the lattice points in a dilated polycell.

We now return to the BG decomposition. 
We see from (\ref{shrinkP}) that to compute $E_{\mathcal P}(p^e)$, one may restrict the BG decomposition 
$\Gamma$ to $\mathcal P$ and decompose $\mathcal P$ into a union of convex polycells. 
In this way, Bruns refined the generalized Ehrhart theorem in the case of affine semigroup rings 
and proved that the second coefficient is also a constant. 

In summary, the discussions above lead to the following identity for $e \gg 1$
\begin{equation}\label{twoHK}
\sum_{\bar \gamma} \mu_R(\gamma) \nu_{C_{\gamma}}(q) 
  = E_{\mathcal P}( q) 
\end{equation}
where the sum runs over the full-dimensional cells in $\bar \Gamma$. 
More precisely, (\ref{twoHK}) gives us two equivalent expressions for  $\hkq{R}{q}$.
As a corollary to Theorem~\ref{BGhk}, Theorem~\ref{fcnBruns} below refines Theorem~\ref{genEhrhart} by identifying that the second coefficient is always a constant for the case of normal affine semigroup rings. 

\begin{theorem}[Bruns~\cite{Bru1}]\label{fcnBruns}
The Hilbert-Kunz function of $R$ with respect to $\pp m$ is a quasi-polynomial. Precisely it has the form 
\[ \hkq{R}{q} = \mathcal{V}ol (\mathcal P ) q^d + \beta q^{d-1} + (\text{a quasipolynomial  in lower degrees}),  \] 
where $\beta$ is a constant, and $\mathcal{V}ol (\mathcal P)$ denotes the volume of the polycell $\mathcal P$ 
determined by the shape of $M$ in $\mathbb Z^d$. 
\end{theorem}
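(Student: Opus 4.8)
The plan is to deduce Theorem~\ref{fcnBruns} as a direct corollary of Theorem~\ref{BGhk} together with the refined structure of the quasipolynomials $\nu_\gamma$ recorded in Theorem~\ref{thm_nu}, and to identify the leading coefficient via the volume interpretation coming from (\ref{HKEhrhart}) and (\ref{shrinkP}). Concretely: by Theorem~\ref{BGhk} we already have the closed formula $\hkq{R}{q} = \sum_\gamma \mu_R(C_\gamma)\,\nu_\gamma(q)$, where the sum runs over the finitely many full-dimensional cells $\gamma$ in $\bar\Gamma$ and each $\mu_R(C_\gamma)$ is a nonnegative integer constant independent of $q$. Since a finite $\mathbb{Z}$-linear combination of quasipolynomials is again a quasipolynomial, and since by Theorem~\ref{thm_nu} each $\nu_\gamma(q) = \operatorname{vol}(\gamma)\,q^d + b_\gamma q^{d-1} + \tilde q_\gamma(q)$ with $\operatorname{vol}(\gamma)$ and $b_\gamma$ constants and $\tilde q_\gamma$ a quasipolynomial of degree $d-2$, it follows immediately that
\[
 \hkq{R}{q} = \Bigl(\sum_\gamma \mu_R(C_\gamma)\operatorname{vol}(\gamma)\Bigr) q^d + \Bigl(\sum_\gamma \mu_R(C_\gamma) b_\gamma\Bigr) q^{d-1} + \bigl(\text{quasipolynomial of degree } d-2\bigr),
\]
so the coefficients of $q^d$ and $q^{d-1}$ are genuine constants, with $\beta := \sum_\gamma \mu_R(C_\gamma) b_\gamma$.

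Next I would pin down the leading coefficient. From (\ref{HKEhrhart}) and (\ref{shrinkP}) we know $\hk{R}{e} = \#(p^e\cdot\mathcal P \cap \mathbb Z^d) = \#(\mathcal P \cap \tfrac{1}{p^e}\mathbb Z^d)$, and the standard Riemann-sum argument (the one used for (\ref{HSmulti})) shows $\hk{R}{e}/(p^e)^d \to \mathcal Vol(\mathcal P)$. Comparing with the $q^d$-coefficient obtained above forces $\sum_\gamma \mu_R(C_\gamma)\operatorname{vol}(\gamma) = \mathcal Vol(\mathcal P)$, which is also transparent geometrically: restricting the BG decomposition $\Gamma$ to $\mathcal P$ tessellates $\mathcal P$, with each cell class $\bar\gamma$ appearing $\mu_R(C_\gamma)$ times (this is exactly the multiplicity bookkeeping in the proof of Theorem~\ref{BGhk}), so the volumes add up accordingly. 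Either way we conclude the leading term is $\mathcal Vol(\mathcal P)\,q^d$, as claimed.

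The only genuinely substantive input is the refinement that $b_\gamma$ in Theorem~\ref{thm_nu} is a \emph{constant} rather than merely a periodic function, since for a general rational polytope the subleading Ehrhart coefficient is only periodic; everything else here is formal. That refinement, however, is precisely the content quoted from Bruns~\cite{Bru1} in Theorem~\ref{thm_nu}, so for the present proof it may be invoked. Thus the main (and essentially only) obstacle has already been isolated upstream, and the proof of Theorem~\ref{fcnBruns} itself is the short assembly just described: combine Theorem~\ref{BGhk}, plug in the shape of $\nu_\gamma$ from Theorem~\ref{thm_nu}, observe that finite integer combinations preserve the quasipolynomial form and the constancy of the top two coefficients, and match the leading coefficient to $\mathcal Vol(\mathcal P)$ using the lattice-point volume estimate. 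I would also remark that the lower-degree part is genuinely only a quasipolynomial in general, consistent with Examples~\ref{exKunz} and \ref{exMonsky}, so no further sharpening of that tail should be expected.
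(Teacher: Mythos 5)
Your proposal is correct and follows essentially the same route as the paper: Theorem~\ref{fcnBruns} is derived as a direct corollary of Theorem~\ref{BGhk} by substituting the quasipolynomial form of $\nu_{\gamma}$ from Theorem~\ref{thm_nu}, reading off $\beta=\sum_{\gamma}\mu_R(C_{\gamma})b_{\gamma}$ as a constant, and identifying the leading coefficient with $\mathcal{V}ol(\mathcal P)$ via the lattice-point/volume interpretation. You correctly isolate the only substantive input — Bruns's refinement that the second Ehrhart coefficient $b_{\gamma}$ is a genuine constant — which is exactly where the paper places the weight as well.
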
  

Roughly speaking, by Theorem~\ref{BGhk}, the leading coefficient is 
$e_{HK} = \sum_{C_{\gamma}}  \mu_R(\gamma) \cdot   \mathcal{V}ol ( C_{\gamma} ) = \mathcal{V}ol (\mathcal P ) $,
and the second coefficient 
$\beta = \sum_{\bar{\gamma} \in \bar{\Gamma} }  \mu_R(\gamma) \cdot b_{\gamma} $ is a constant as expected 
where $b_{\gamma}$ is the second coefficient in $\nu_{\gamma}(n)$.

We now elaborate on the content of (\ref{twoHK}). First recall the ring $R = \kappa[s,st,st^2]$ from Example~\ref{ex_BG} and
the BG decomposition of the polycell $\mathcal P$: 
\begin{center} 
 \begin{tabular}{c} 
     \epsfig{file=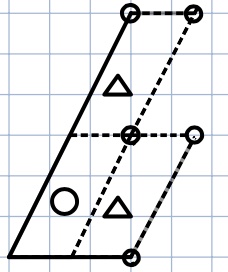, height=.8in}  \\
        {\small {\em Figure} 4: $\mathcal P$ by $\overline \Gamma$ } 
 \end{tabular}    
\end{center}
Thus (\ref{twoHK}) shows that
\[  E_{\mathcal P}( p^e) = \hk{R}{e}  = 
\mu_R(C_{ \gamma_1 }) \, \nu_{\gamma_1}(p^e)  +  \mu_R(C_{ \gamma_1 }) \,\nu_{\gamma_1}(p^e)
 = \nu_{\gamma_1}(p^e)  +  2 \, \nu_{\gamma_1}(p^e) \]
where $\gamma_1$ is equivalent to the cell labeled by $\bigcirc$ that occurs once and 
$\gamma_2$ by $\triangle$ that occurs twice. Moreover, each
$ \nu_{\gamma_i}(p^e) = E_{\lceil \gamma_i \rceil}(p^e)$ is an Ehrhart quasipolynomial of $\lceil \gamma_i \rceil$.

The following remarks point out some subtleties in (\ref{twoHK}). 

\begin{remark}\label{rmk_BG}
$(a)$ Despite the fact that  $\nu_{\gamma}(q)  =  E_{ \lceil \gamma \rceil} (q) $, 
a general one-to-one correspondence between 
the set of points counted by the left hand side in (\ref{twoHK}) and the actual lattice points in $\mathcal P$, counted by $E_{\mathcal P}( q)$, is not immediately obvious for arbitrary $\mathcal P$. 
One can carefully match them in the above Example~\ref{ex_BG}. 
Those in the interiors (of $\gamma_i$) are trivial, but the correspondence at the boundaries 
is much more subtle, especially in higher dimensions. 
This is the place where the BG decomposition lends us power, by 
relating the lattice points to those enclosed by the closure of the full-dimensional cells 
without doing the actual correspondence. \\
$(b)$
Since $\mathcal P$ is tessellated by the full-dimensional equivalence classes $\bar \gamma$, 
(\ref{twoHK}) suggests that the minimal generators $\mu_R(C_{\gamma})$ can be obtained by 
the number of times that $\bar \gamma$ appears in $\mathcal P$. This is clear in Example~\ref{ex_BG} (see {\em Figure}~4), 
but full generality would probably require some careful consideration. 
\end{remark}

We end the section by summarizing several families of rings whose Hilbert-Kunz function enjoys the following functional form 
with constants $\alpha$ and $\beta$,
\[ \hkq{R}{q} = \alpha q^d + \beta q^{d-1} + \gamma(q). \]
\begin{enumerate} 
\item $R$ is an excellent normal local domain or an excellent local ring satisfying (R1$'$). In this case, $\gamma(q) = O(q^{d-2})$.  (See Subsection~\ref{sub_classgp}.)
\item $R$ is a two-dimensional normal domain that is a standard graded algebra over an algebraically closed field $\kappa$. 
Indeed, $\beta=0$ in this case. Moveover, $\gamma(q)$ is a bounded function in general, and is eventually periodic if $\kappa$ is the algebraic closure of a finite field. (See Subsection~\ref{sub_sheaf}.)
\item $R$ is a normal affine semigroup ring. Then, $\gamma(q)$ is eventually a quasipolynomial. (See Theorem~\ref{fcnBruns}.) 
\item $R = \kappa[X]/I_2(X)$ where $I_2(X)$ is the ideal generated by all $2 \times 2$ minors of the $m\times n$ matrix $X$ of indeterminates. In this case, $\hkq{R}{q}$ is a polynomial. (See Subsection~\ref{sub_comb}.) 
\end{enumerate}

\section{Examples in Normal Affine Semigroup Rings}\label{example} 

In this section, we present several examples where the Hilbert-Kunz functions are obtained as generalized Erhart quasipolynomials 
by counting the lattice points inside the semi-open rational polycell $\mathcal P$ as described in Section~\ref{affineEhrhart}. 
First we count the points for the family of rational normal cones of arbitrary degree in the most 
intuitive way. Then we use another 2-dimensional example to illustrate  a {\tt Macaulay2} simulation. 
We also present some results and discussions on the homogeneous coordinate ring of Hirzebruch surfaces and its relative  
variations. The calculations presented here are not performed with  BG decompositions. 
However, we provide some comments for those who are interested in pursuing  this direction. 
Throughout the section, we assume $e\geq 1$.

\begin{example}\label{regcone} 
 Consider $\widehat{C}_g =\spec R_g \subset \mathbb C^{g+1}$, the rational normal cone of degree
  $g$ in characteristic $p$: $R_g=k[s, st, \cdots, st^g]$ or $ k[x_0,
  \cdots , x_g]/ J$ where $J$ denotes the ideal generated by the
  maximal minors of the matrix $\left(
\begin{matrix}x_0  & x_1 & \cdots & x_{g-1} \\ x_1 & x_2 & \cdots & x_g
\end{matrix} \right ) $.  The graph in the previous Figure ~3 illustrates $\widehat{C}_2$.
The Hilbert-Kunz function of $R_g$ with respect to the maximal graded ideal is
\[ \hk{R_g}{e} = (\frac{g+1}{2}) (p^e)^2 + \frac{1}{2}(-v_e^2+v_e g - g +1) \]
where 
 $v_e \in \{0, \dots, g-1\}$ is the congruence class of $p^e -1$ modulo $g$. 

We present a computation by an elementary approach. Observe the right graph for $e=2$ in Figure 3. 
The number of the lattice points in 
$p^2 \cdot \mathcal P \cap \mathbb Z^2$ can be obtained by counting those in $\bigtriangleup OTV$ 
without the right vertical boundary and the smaller regions $\bigtriangleup TUW$, 
$\bigtriangleup UVQ$ without the upper and right boundaries. For arbitrary $g$, 
there will be exactly $g$ identical smaller triangular shapes. 
The count in  $\bigtriangleup OTV$ denoted by $A_e$, is given by 
\[ \begin{array}{lcl} 
A_e & = & 1 + ( 1+ g ) + ( 1 + 2 g) + \cdots + (1 + (p^e-1) g)  \\
 & = & p^e + g \, [ 1 + 2 + \cdots + ( p^e-1 ) ] \\
 & = & \frac{g}{2} (p^e)^2 +  ( 1 - \frac{g}{2 } ) ( p^e ).
\end{array} \]
The number of the lattice points contained in the identical smaller triangles $\bigtriangleup TUW$ and $\bigtriangleup UVQ$,
denoted by $B_e$, is 
\[ \begin{array}{lcl} 
B_e & = & (p^e-1) + ( p^e -1-g) + (p^e -1 - 2g) + \cdots + (p^e -1 - u g)
\end{array} \] 
where $u$  is a non-negative integer such that $p^e -1 = u g + v_e$ with $v_e$ as described above.
After replacing $u$ by $(p^e - 1 -v_e)/g$, $B_e$ is simplified to be
\[ \begin{array}{lcl} 
B_e 
& = & (u+1) (p^e -1) - g \cdot \frac{(1+u)u }{2} \\ 
& = & \frac{1}{2g}  (p^e)^2 + ( \frac{1}{2} - \frac{1}{g} ) (p^e) - \frac{1}{2g} (v_e^2 - v_e g + g -1). 
\end{array} \] 
Thus the Hilbert-Kunz function of $R_g$ can be expressed as 
\[ \begin{array}{ll}
\hk{R}{e} & = A_e + g\cdot B_e  \\
& =  \frac{g}{2} (p^e)^2 +  ( 1 - \frac{g}{2 } ) ( p^e ) + 
g [ \frac{1}{2g}  (p^e)^2 + ( \frac{1}{2} - \frac{1}{g} ) (p^e) - \frac{1}{2g} (v_e^2 - v_e g + g -1) ] \\ 
& = \frac{1+g}{2} (p^e)^2  + 0 \cdot (p^e) -  \frac{1}{2} (v_e^2 - v_e g + g -1)  
 \end{array} \]
 with $v_e \equiv p^e -1 \pmod g $ as described.
\end{example}

It is a good exercise to carry out the computation for $\hk{R_g}{e}$ by the BG decomposition. 
There will be $g$ non-isomorphic conic divisors and each corresponds to a $2$-dimensional semiopen polycell $\gamma$. 
In fact, all of them are parallelograms of the same shape and size and for $i = 1, \dots, g$, there exists
exactly one cell appearing $i$ times in $\mathcal P$. 
However, since the polytopes are rational, polycells presenting non-isomorphic conic divisors do not necessarily have the same Ehrhart quasipolynomials. This adds complications but interesting twists to this exercise which leads to a better understanding of BG decompositions and Ehrhart quasipolynomials. 

\begin{example}\label{toricnonproj} 
$R=\kappa[s^2t, st, st^2]$  with ${\rm char}\, \kappa = p$. 
In this example, we describe an interpolation process using the values of $\hk{R}{e}$ provided by {\tt Macaulay2}.
In this example for $p=2$ and $e=1$, we use the following code: 
\begin{quote} 
{\tt 
p=2 \\
k=ZZ/p\\
T=k[s,t] \\
S=k[x,y,z] \\
f=map(T, R, $\{$s\^{}2*t, s*t, s*t\^{}2$\}$) \\
J=ker f \\
R=S/J \\
e=1 \\
q=p\^{}e \\ 
Ie=ideal(x\^{}q, y\^{}q, z\^{}q) \\
degree(R/Ie) \\
toRR(200, oo/4\^{}e)
}
\end{quote}
The second to last command produces the values of $\hk{R}{e}$ while 
the last command {\tt toRR(200, oo/4\^{}e)} divides the value by $(2^e)^2$ and converts it to 
the precision of 200 decimals. 
By Theorem~\ref{fcnBruns}, we know that $\hk{R}{e}$ must be in the form of 
\[ \hk{R}{e} = {\tt a} \, (2^e)^2 + {\tt b}\, (2^e) + c_e \] 
with rational constants {\tt a} and {\tt b}, and eventually periodic function $c_e$. Thus the sequence of outputs obtained by iterating the codes above with increasing $e$ must converge to the desired leading coefficient {\tt a}.
Since {\tt a} is a rational number, the sequence will eventually stabilize with apparent repeating decimals and 
one can guess an expression for {\tt a} as a quotient of two integers. 

Once the precise value {\tt a} is available, we can calculate {\tt b} by iterating similar {\tt Macaulay2} codes
with the last two input commands changed to 
\begin{quote}
{\tt 
degree(R/Ie) - a * q\^{}2 \\  
toRR(200, oo/2\^{}e)
}
\end{quote}
We do so until the sequence of outputs stabilizes with apparent repeating decimals.
Then do the simple computation to obtain {\tt b} in the form of a rational number. 
For the final term, do the {\tt Macaulay2} iterations with the last two input commands changed to
\begin{quote}
{\tt 
degree(R/Ie) - a * q\^{}2 - b * q \\  
toRR(200, oo) 
}
\end{quote}
In this step, the data may form multiple groups and each group will converge to a rational number.
In higher dimensional cases, we expect such a property will hold for the coefficients corresponding to 
terms of degree equal to $\dim R-2$. But for the remaining terms, the trend of the data might not be so clear. 

We perform this interpolation for small primes $p = 2, 3, 5, 7, 11$. The numerical outputs suggest the following results
\begin{equation}\label{ex2} \hkq{R}{q} = \frac{5}{3} q^2 - \frac{2}{3},  \,\,\,\,  \mbox{ if }  p \neq 3;  
\,\,\,\, \mbox{ and } \,\,\,\,
 \hkq{R}{q}= \frac{5}{3} q^2,  \,\,\,\,  \mbox{ if }  p = 3 . \end{equation}
By doing the lattice point count as done in Example~\ref{regcone}, one can show that 
the Hilbert-Kunz functions of $R$ for arbitrary primes are indeed given by (\ref{ex2}) as suggested by {\tt Macaulay2}. 
The BG decomposition can be performed in a similar way as commented in Example~\ref{regcone}.  
We leave these exercises to the interested readers. 
\end{example} 

We make two remarks regarding the above two examples. 
\begin{remark}\label{rmk_example}
$(a)$ In Example~\ref{regcone}, $\hk{R_g}{e}$ is a polynomial if and only
  if the class of $p^e -1$ modulo $g$ is independent of $e$. For instance, $g|p-1$ or $g=p$.
  In Example~\ref{toricnonproj}, $\hk{R}{e}$ is a polynomial for any given prime $p$
  even though the vertices of the polycell $\mathcal P$ defined in Section~\ref{affineEhrhart} 
  are obviously rational, but not integral in this example.
  
By the generalized Ehrhart Theorem~\ref{genEhrhart}, the period of an  Ehrhart quasipolynomial $E_{\mathcal P}(n)$ divides 
the least common multiple of the denominators of the vertices. Since Hilbert-Kunz function is specialized only at the powers of a prime $p$, the period may be smaller or even 1, but does not necessarily divide the least common multiple. 
These are clearly displayed in the previous two examples and the next one. For instance, in Example~\ref{regcone}, 
taking $p=3$ and $g=5$, then the least common multiple of the vertices is $5$ but the period of the Hilbert-Kunz function is $4$.

$(b)$ In both Examples~\ref{regcone} and \ref{toricnonproj}, the second coefficient $\beta$ vanishes. This is not a
  coincidence because a simplicial cone defines a
  quotient singularity which can be realized as a polynomial invariant
  of a finite group.  Therefore, its divisor group is a torsion group and
  hence $\beta =0$ ({\em c.f.} \cite{HMM},\cite{Ben},\cite[2.2]{F2},\cite{K5}).
\end{remark}

Next, suggested by U. Walther, we consider the family of Hirzebruch surfaces $X_{\Sigma_a}$.
Let $P_a$ be the integral polytope with vertices: $(0,0), (1,0), (0,1), (1,a+1)$ in $\mathbb R^2$. 
A {\em Hirzebruch surface} is a toric variety constructed from the normal fan 
$\Sigma_a$ associated to $P_a$.  One can also view $X_{\Sigma_a}$ 
as the Zariski closure of the set
$ \{ (1, s, t, st, \dots, st^{a+1}) | (s,t) \in (\mathbb C^*)^2 \}$ in $\mathbb P^{a+2}$. 
Notice that the powers of the monomials in $s, t$ are exactly the lattice points in $P_a$
 ({\em c.f.} \cite[Example~2.3.16]{CLS}). 
The ring $R=\kappa[M]$ in Example~\ref{Fa1} is the coordinate ring of the affine cone of $X_{\Sigma_a}$
with $M = \{ ( 1, {\bf v}) | {\bf v} \in P_a \cap \mathbb Z^2\}$. 

\begin{example}\label{Fa1}  
Set $\kappa = \mathbb Z /p$. Let $S_a$ be the affine semigroup ring arising from $X_{\Sigma_a}$ 
\[S_ a= \kappa[u, us, ut, ust, ust^2, \dots, ust^{a+1}]. \] 

We compute the Hilbert-Kunz function of $S_a$ with respect to the maximal ideal
generated by all monomials other than 1. By performing the same calculations using {\tt Macaulay2} as done in Example~\ref{toricnonproj}, the simulations suggest
\begin{enumerate}
\item $a=1$, 
\[ \begin{array}[t]{ll} 
\mbox{ if } p =2, & \hkq{S_1}{q} = \frac{7}{4}q^3 -\frac{1}{8} q^2 - \frac{1}{4} q ; \\ 
\mbox{ if } p \geq 3, & \hkq{S_1}{q} = \frac{7}{4}q^3 -\frac{1}{8} q^2 - \frac{1}{4} q -\frac{3}{8} .
\end{array}  \]

\item $a=2$, 
\[ \begin{array}{ll}

\mbox{ if } $p=3$, & \hkq{S_2}{q} = \frac{20}{9} q^3 - \frac{1}{3} q^2;  \\ 

\mbox{ if } p \neq 3, & \hkq{S_2}{q} = \frac{20}{9} q^3 - \frac{1}{3} q^2 + \left \{ 
   \begin{array}{ll} 
     - \frac{4}{9}, & q \equiv 2 \pmod 3 \\
     - \frac{8}{9}, & q \equiv 1 \pmod 3 
   \end{array} \right. .
\end{array}  \]
  
\end{enumerate} 
More cases are documented in Schalk~\cite{Sch2013}. 
The knowledge about the possible values of the period from Theorem~\ref{genEhrhart} is very helpful for the interpolation process of the quasipolynomials. 
The vertices of the polycell within which we count the lattice points must be the intersection of those parallel to any
three support hyperplanes of the affine cone of the semigroup generating $R$. By straightforward calculation, we know that the least multiple of the denominator of these vertices is $a+1$.
Thus the period of $\hkq{S_a}{q}$ is bounded above by $a+1$, although not necessarily dividing $a+1$, as indicated in Remark~\ref{rmk_example}(a). 

\end{example}

The Hilbert-Kunz function of Hirzebruch surfaces has also been considered via the pair 
$(X, \mathcal L)$ by Trivedi~\cite{Tr2016} using sheaf theoretic method where 
$X$ is regarded as a nonsingular ruled space over $\mathbb P^1$ and $\mathcal L$ is an ample line bundle on $X$. 
A closed form of the Hilbert-Kunz function is given in terms of formulas in the data provided by the ample line bundle $\mathcal L$. 

Saikali~\cite{Sai2018}  considered a variation of $S_a$ where the semigroup ring is a $k$-algebra generated by the monomials contained in the cone over the 
polytope $P_{b,h}$ with the vertices $(0,0,1)$, $(b, 0, 1)$, $(0, h, 1)$ and $(b+h, h, 1)$. 
Setting $b=h=1$ gives the special case in Example~\ref{Fa1} with $a=1$ up to an isomorphism, 
{i.e.,} $(P_a,1)=P_{b,h}$ when $a=b=h=1$. 
For $S_a$ in Example~\ref{Fa1}, all the nonzero lattice points are on the boundary of  $(1, P_a)$ for any $a$ while 
$P_{b,h}$ has lattice points in the interior if $h >1$. 
So the shape of the corresponding polycell $\mathcal P$ is a lot more complicated in \cite{Sai2018} in which
the lattice points are counted by viewing $\mathcal P$ as the union of a  large  convex polytope and multiple prickly smaller shapes similar to the way Example~\ref{regcone} is considered. A closed form of the Hilbert-Kunz function is obtained by analyzing complicated counting functions  with the assistance of the 3D graphing software {\sc Geogebra}.   
 
The BG decompositions for $S_a$ in Example~\ref{Fa1} and for the variations in \cite{Sai2018} are much more subtle 
since these are nonsimplicial three-dimensional polycells and they have not been explored in great detail. 

We observe the following general facts. Let $R$ be a normal afffine semigroup ring and $R_+$ be the maximal ideal consisting of all the monomials other than 1. 

\begin{remark}\label{construction}  
The Hilbert-Kunz multiplicity of an affine semigroup ring, as it can be expressed by the volume of $\mathcal P$, 
is independent of the characteristic $p$.  
Obviously the Hilbert-Kunz function is not. 
Not only does the shape of $\hk{R}{e}$ depend on $p$, but,  even in polynomial form, the function varies as $p$ does.

For a given $S_a$,  if $q$ is constant modulo $ {a+1}$, then $\hkq{R}{q}$ is in a polynomial form. 
Otherwise, it is likely to be a quasipolynomial. 
This indicates that the construction of a normal semigroup ring with a desired form of Hilbert-Kunz function 
is, in theory, plausible. 
In practice though, in spite of Theorem~\ref{genEhrhart} that identifies the possible period  of the Ehrhart quasipolynomial from
the coordinates of the vertices, the exact period is not immediately obvious (see references following Question~\ref{period}).  

Furthermore, since the Ehrhart quasipolynomial of a polytope with integral vertices is a polynomial, it is possible to construct affine semigroup rings whose Hilbert-Kunz functions are true polynomials. In fact, this happens if all the cells of $\Gamma$ have integral vertices. Burns pointed out in \cite[p.~71]{Bru1} that this is possible if the affine semigroup $M$ has a unimodular configuration.
As we can see from the examples above, the integral condition on the full dimensional cells are not necessary 
for Hilbert-Kunz functions to be of true polynomial forms. While the configuration of semigroups are independent of the characteristic, the functional form of Hilbert-Kunz functions in general does depend on the characteristic.
\end{remark}

\begin{remark}\label{shape}
The open cells in $\overline \Gamma$ of the BG decomposition determine the ultimate shape of $\hk{R}{e}$. 
We say that a function $F(e)$ is a polynomial up to a periodic function if it is in the form of  
\[ F(e) =  a_{d} (p^e)^{d} + \cdots + a_1 (p^e) + \delta_e \]
where $a_d, \dots, a_1$ are constants and $\delta_e$ is a periodic function. 
All the functions presented in this section are in such a form. Next we call  $\hk{R}{e+1} - \hk{R}{e}$ 
  the {\em difference function} of $\hk{R}{e} $.
 By a straightforward computation, we can show that if  the difference function is a 
 polynomial up to a periodic function, then so is $\hk{R}{e}$.  
This is supported by Saikali's analysis in \cite{Sai2018}. 
It is also related to the shape of the full dimensional cells in the BG decomposition. 
Not all full-dimensional cells have its Ehrhart quasipolynomial as a polynomial up to a periodic function. 
It is reasonable to expect that the Hilbert-Kunz functions can be characterized by the BG decomposition. In another words, 
we expect that the functional form of Hilbert-Kunz function can be determined by the combinatorial structure of the affine semigroup ring. 
\end{remark}

By studying the BG decompositions, one might be able to understand better the periodic behavior of $\hk{R}{e} $ as pointed out in Remarks~\ref{construction} and \ref{shape}. In addition, we may challenge ourselves to find a geometric interpretation of the coefficients of the lower degree terms, and search for useful ways to determine the period of the quasipolynomial. 

We end this paper with the following questions. An answer to Question~\ref{computation} will provide an effective algorithm of accessing the Hilbert-Kunz functions of normal affine semigroup rings.

\begin{question}\label{coeff} 
As in Theorem~\ref{Ehrhart} for the leading and the second coefficients,  
can one give a meaningful geometric interpretation to 
the coefficient of each degree in the polynomial form of $\hk{R}{e}$? 
\end{question} 

\begin{question}\label{period} 
How can the period of $\hk{R}{e} $ be effectively and efficiently estimated? 

An Ehrhart version of Question~\ref{period}  is posted in Beck and Robins~\cite[3.39]{BeR}; see also 
Woods~\cite{Wo2005}, and Beck, Sam and Woods~\cite{BSW2008}. 
\end{question}

Let $R=k[M]$ be a normal affine semigroup ring in Subsection~\ref{sub_BG}. Note that the BG decomposition is determined by the semigroup structure of $M$. 
\begin{question}\label{computation}
Can an effective algorithm be developed for the function $\nu_{\gamma}(n)$ and the minimum generators 
$\mu_R(C_{\gamma})$  for any full dimensional class $\overline \gamma$ in $\overline \Gamma$? Furthermore, does the semigroup structure of $M$ characterize the BG decomposition and vice versa? 
\end{question}

Counting lattice points inside a polytope, though a simple and rather
primitive task, is not as easy as it appears. 
The desire of understanding Hilbert-Kunz function now leads us to ask for
what class of polytope $\Delta$ can  $E_{\Delta}(n) $ be explicitly calculated. 
Starting from an affine semigroup and using the BG decomposition, it seems that one may branch out into 
Hilbert-Kunz or Ehrhart theory. The results obtained from either direction may very likely enhance each other.

\bigskip

\noindent{\bf Acknowledgement.} My sincere gratitude goes to Kazuhiko Kurano 
for showing me the connection between Hilbert-Kunz functions and Ehrhart Theory. 
To Joseph Gubeladze for pointing to 
the beautiful source \cite{BeR}. To Uli Walther for 
discussions on the Macaulay 2 codes. To Holger Brenner and Claudia Miller for 
insightful conversations offering support and many crucial references. 
Special thanks to I-Chiau Huang and Academia Sinica of Taiwan who have sponsored multiple visits 
throughout the course while the main ideas in this article were formed. 
I am grateful for the anonymous referees for providing detailed comments and corrections that greatly improve
the manuscript. 


\bigskip

\medskip

{\small 
\noindent Department of Mathematics, Central Michigan University, Mt. Pleasant, MI~48859, U.S.A.

\noindent {\em email}: chan1cj@cmich.edu

}


\begin{thebibliography}{99}

\bibitem
{Ab2008} {\sc I. Aberbach}, 
{\it The existence of the $F$-signature for rings with large $\mathbb Q$-Gorenstein locus}, 
J. Algebra {\bf 319} (2008) 2994-3005. 

\bibitem
{AbEn2008} {\sc I. Aberbach and F. Enescu},
{\it Lower bounds for Hilbert-Kunz multiplicities in local rings of fixed dimension}, 
Special volume in honor of Melvin Hochster. Michigan Math. J. {\bf 57} (2008) 1-16. 

\bibitem
{AbEn2013} {\sc I. Aberbach and F. Enescu},
{\it New estimates of Hilbert-Kunz multiplicities for local rings of fixed dimension},
Nagoya Math. J. {\bf 212} (2013) 59-85.

\bibitem
{BaB2017} {\sc B. Batsukh and H. Brenner},
{\it Hilbert-Kunz Multiplicity of Binoids}, 
arXiv:1710.05761 (2017).     

\bibitem
{Ba}{\sc M. Barco}, 
{\it Dilation and Lattice Point Count}, Central Michigan University Masters Plan B Paper (Spring 2012) 25~pp. 

\bibitem
{BeR} {\sc M. Beck and S. Robins},
{\it Computing the Continuous Discretely}, 
Springer (2007).

\bibitem
{BSW2008} {\sc M. Beck, S. Sam and K. Wood},
{\it Maximal periods of (Ehrhart) quasi-polynomials}, 
J. Combin. Theory Ser. A {\bf 115} (2008) 517-525. 

\bibitem
{Ben} {\sc D. Benson}, 
{\it Polynomial Invariants of Finite Groups}, 
Cambridge University Press, Cambridge (1993). 

\bibitem
{BlE} {\sc M. Blickle and F. Enescu},
{\it On rings with small Hilbert-Kunz multiplicity},
Proc. Amer. Math. Soc. {\bf 132} (2004) 2505-2509. 

\bibitem
{Br1} {\sc H. Brenner,} {\it The rationality of the
    Hilbert-Kunz multiplicity in graded dimension two}, 
    Math. Ann. {\bf 334} (2006) 91-110. 

\bibitem
{Br2} {\sc H. Brenner,} 
{\it The Hilbert-Kunz function in graded dimension two,} 
Comm. Algebra {\bf 35} (2007) 3199-3213. 

\bibitem
{Br3lecture} {\sc H. Brenner,}
{\it Tight closure and vector bundles},
Three Lecturers on Commutative Algebra, Univ. Lecture Ser. {\bf 42} Amer. Math. Soc., Providence, RI (2008). 

\bibitem
{Br_arXiv} {\sc H. Brenner},
{\it Irrational Hilbert-Kunz multiplicity},
arXiv:1305.5873vl [math.AG] (2013).  

\bibitem
{BrHe2006} {\sc H. Brenner, G. Hein},
{\it Restriction of the cotangent bundle to elliptic curves and Hilbert-Kunz functions},
Manuscripta Math. {\bf 119} (2006) 17-36. 

\bibitem
{BLM2012} {\sc H. Brenner, J. Li, and C. Miller}, 
{\it A direct limit for limit Hilbert-Kunz multiplicity for smooth projective curves}, 
J. Algebra {\bf 372} (2012) 488-504. 

\bibitem
{Bru1} {\sc W. Bruns,} 
{\it Conic divisor classes over a normal monoid algebra}, 
Commutative algebra and algebraic geometry, 
Contemp. Math. {\bf 390} (2005) 63-71. 

\bibitem
{BruG}{\sc W. Bruns and J. Gubeladze},
{\it Divisorial Linear Algebra of Normal Semigroup Rings} 
Algebr. Represent. Theory {\bf 6} (2003) 139-168. 

\bibitem
{BrH} {\sc W. Bruns and J. Herzog,} 
{\it Cohen-Macaulay Rings}, 
Cambridge Studies in Advanced Mathematics {\bf 39}, Cambridge
University Press (1993).

\bibitem
{Bu2012} {\sc I. Burban}, 
{\it Frobenius morphism and vector bundles on cycles of projective lines},
Comm. Algebra {\bf 40} (2012) 2983-2988. 


\bibitem
{BuC} {\sc R.-O. Buchweitz and Q. Chen}, 
{\it Hilbert-Kunz functions of cubic curves and surfaces,}
J. Algebra {\bf 197} (1997) 246-267. 

\bibitem
{CaDeS2019} {\sc A. Caminata, A. De Stefani}, 
{\it $F$-signature function of quotient singularities}, 
J. Algebra {\bf 523} (2019) 311-341. 

\bibitem
{CeDHZ2012} {\sc O. Celikbas, H. Dao, C. Huneke, and Y. Zhang}, 
{\it Bounds on the Hilbert-Kunz multiplicity}, 
Nagoya Math. J. {\bf 205} (2012) 149-165.

\bibitem
{C1} {\sc C-Y. J. Chan},
{\it Filtrations of Modules, the Chow Group and the Grothendieck
Group}, J. Algebra~{\bf 219} (1999) 330-344.

\bibitem
{ChH1998} {\sc L. Chiang and Y. Hung}, 
{\it On Hilbert-Kunz Functions of Some Hypersurfaces}, 
J. Algebra~{\bf 199} (1998) 499-527.


\bibitem
{ChK1} {\sc C-Y. J. Chan and K. Kurano}, 
{\it Hilbert-Kunz functions over rings regular in codimension one},  
Comm. Algebra {\bf 44} (2016) 141-163. 

\bibitem
{ChK2} {\sc  C-Y. J. Chan and K. Kurano},  
{\it Cohen-Macaulay cones spanned by maximal Cohen-Macaulay modules},
Trans. Amer. Math. Soc. {\bf 368} (2016) 939-964. 

\bibitem
{CLU} {\sc C-Y. J. Chan, J.-C. Liu and B. Ulrich}, 
{\it The Buchsbaum-Rim multiplicity and the Hilbert-Samuel multiplicity,}
J. Algebra {\bf 319} (2008), 4413-4425.

\bibitem
{ShTCh1993} {\sc Sh.-T. Chang}, 
{\it The asymptotic behavior of Hilbert-Kunz functions and their generalizations}, 
Thesis, University of Michigan, 1993. 

\bibitem
{ShTCh1997} {\sc Sh.-T. Chang}, 
{\it Hilbert-Kunz functions and Frobenius functors},
Trans. Amer. Math. Soc. {\bf 349} (1997) 640-659.

\bibitem
{Co1996} {\sc A. Conca}, 
{\it Hilbert-Kunz function of monomial ideals and binomial hypersurfaces},
Manuscripta math. {\bf 90} (1996) 287-300. 

\bibitem
{Con1995}{\sc M. Contessa},
{\it On the Hilbert-Kunz function and Koszul homology,}
J. Algebra {\bf 175} (1995) 757-766.

\bibitem
{CLS} {\sc D. Cox, J. Little and H. Schenck,}
{\it Toric Varieties}, 
Graduate Studies in Mathematics {\bf 124}, AMS, Providence, 2011. 

\bibitem
{Cut} {\it S. D. Cutkosky},
{\it Introduction to Algebraic Geoemtry},
Graduate Studies in Mathematics {\bf 188}, AMS, Providence, 2018. 

\bibitem
{DaS2020} {\sc L. Dao and I. Smirnov}, 
{\it On the generalized Hilbert-Kunz function and multiplicity}, 
Israel J. Math. {\bf 237} (2020) 155-184. 

\bibitem
{Du1} {\sc S. Dutta}, 
{\it Frobenius and multiplicities}, 
J. Algebra {\bf 85} (1983) no.~2 424-448. 
 
\bibitem
{Ehr} {\sc E. Ehrhart,} 
{\it Sur les poly\`{e}dres rationnels homoth\'{e}tiques \`{a}  $n$ dimensions,} 
 C. R. Acad. Sci. Paris {\bf 254} (1962) 616-618.
 
 \bibitem
 {EnS} {\sc F. Enescu and K. Shimomoto}, 
 {\it On the upper semi-continuity of the Hilbert-Kunz multiplicity}, 
 J. Algebra {\bf 285} (2005) 222-237.
 
 \bibitem
 {EpY2017} {\sc N. Epstein and Y. Yao}, 
 {\it Some extensions of Hilbert-Kunz multiplicity}, 
 Collect. Math. {\bf 68} (2017) 69-85. 

\bibitem
{Et} {\sc K. Eto}, 
{\it Multiplicity and Hilbert-Kunz Multiplicity of Monoid Rings}, 
Tokyo J. Math. {\bf 25} (2002) 241-245. 

\bibitem
{EtY} {\sc K. Eto and K.-i. Yoshida}, 
{\it Notes on Hilbert-Kunz multiplicity of Rees algebras},
Comm. Algebra {\bf 31} (2003) 5943-5976.

\bibitem
{FaT} {\sc N. Fakhruddin and V. Trivedi} 
{\it Hilbert-Kunz functions and multiplicities for full flag varieties and elliptic curves}, 
J. Pure App. Algebra {\bf 181} (2003) 23-52. 

\bibitem
{F}{\sc W. Fulton}, 
{\it Intersection Theory,} second edition, 
Springer, Berlin (1998). 

\bibitem
{F2} {\sc W. Fulton},
{\it Introduction to Toric Varieties,}
Princeton Univ. Press, Princeton, NJ (1993)

\bibitem
{GeM2010} {\sc I. Gessel and P. Monsky}, 
{\it The limit as $p \rightarrow \infty$ of the Hilbert-Kunz multiplicity of $\sum x_i^{d_i}$}, 
 	arXiv:1007.2004 (2010).  
	
\bibitem
{GiSo1985} {\sc H. Gillet and C. Soul\'e}, 
{\it $K$-th\'eorie et nullit\'e des multiplicit\'es d’intersection}, 
C. R. Acad. Sci. Paris S\'er. I Math. {\bf 300} (1985), 71–74.	

\bibitem
{GiSo1987} {\sc H. Gillet and C. Soul\'e}, 
{\it Intersection theory using Adams operations}, 
Invent. Math. {\bf 90} (1987) 243-277. 

\bibitem
{Ha1992} {\sc C. Han},
{\it The Hilbert-Kunz function of a diagonal hypersurface},
Thesis, Brandeis University, 1992. 

\bibitem
{HaM} {\sc C. Han and P. Monsky,}
{\it Some surprising Hilbert-Kunz functions,}
Math. Z. {\bf 214} (1993) 119-135.

\bibitem
{Ha} {\it R. Hartshorne}, 
{\it Algebraic Geometry},
Graduate Texts in Mathematics {\bf 52} Springer-Verlag, New York-Heidelberg (1977). 

\bibitem
{HeJ2018} {\sc D. Hern\'andez and J. Jeffries}, 
{\it Local Okounkov bodies and limits in prime characteristic}, 
Math. Ann. {\bf 372} (2018) 139-178. 

\bibitem
{Ho1972} {\sc M. Hochster}, 
{\it Rings of invariants of toric, Cohen-Macaulay rings generated by monomials, and polytopes},
Ann. Math. {\bf 96} (1972) 318-337. 

\bibitem
{Ho2004} {\sc M. Hochster}, 
{\it Tight closure theory and characteristic $p$ methods. With an appendix to Graham J. Leuschke.}
In {\em Trends in Commutative Algebra} {\bf 51} Math. Sci. Res. Inst. Pub., pages 181-210, Cambridge (2004).  

\bibitem
{HoH1990} {\sc M. Hochster and C. Huneke}, 
{\it Tight closure, invariant theory, and the Brian\c{c}con-Skoda theorem.  }
J. Amer. Math. Soc. (1990) 31-116. 

\bibitem
{HH} {\sc M. Hochster and C. Huneke}, 
{\it Phantom homology,} 
Mem. Amer. Math. Soc. {\bf 103} (1993). 

\bibitem
{HoY} {\sc M. Hochster and Y. Yao}, 
{\it Second coefficients of Hilbert-Kunz functions for domains},
preliminary preprint: {\em http://www.math.lsa.umich.edu/$\sim$hochster/hk.pdf}.  

\bibitem
{Hu1998} {\sc C. Huneke}, 
{\it Tight closure, parameter, ideals, and geometry.} 
In {\em Six Lectures on Commutative Algebra}, pages 187-329, Birkh\"auser (1998). 

\bibitem
{Hu2013} {\sc C. Huneke},
{\it Hilbert-Kunz multiplicity and the F-signature}, 
Commutative Algebra, 485-525, Springer, New York (2013). 

\bibitem
{HuL2002} {\sc C. Huneke, G. Leuschke}, 
{\it Two theorems about maximal Cohen-Macaulay modules},
Math. Ann. {\bf 324} (2002) 391-404. 

\bibitem
{HMM} {\sc C. Huneke, M. McDermott and P. Monsky,} 
{\it Hilbert-Kunz functions for normal rings,} 
Math. Res. Letters {\bf 11} (2004) 539-546.

\bibitem
{HuY} {\sc C. Huneke and Y. Yao}, 
{\it Unmixed local rings with minimal Hilbert-Kunz multiplicity are regular}, 
Proc. Amer. Math. Soc. {\bf 130} (2002) 661-665. 

\bibitem
{24hr} {\sc S. B. Iyengar, G. J. Leuschke, A. Leykin, C. Miller, E. Miller, A. Singh, U. Walther,}
{\it Twenty-Four Hours of Local Cohomology,}
Graduate Studies in Mathematics {\bf 87}, AMS, Providence, 2007. 

\bibitem
{Kun} {\sc E. Kunz},
{\it Characterizations of regular local rings for characteristic $p$},
Amer. J. Math. {\bf 91} (1969) 772-784.

\bibitem
{Kun1976} {\sc E. Kunz},
{\it On Noetherian rings of characteristic $p$},
Amer. J. Math. {\bf 98} (1976) 999-1013. 

\bibitem
{Ku2001} {\sc K. Kurano}, 
{\it On Roberts rings}, 
J. Math. Soc. Japan {\bf 53} (2001) 333-355. 

\bibitem
{K16} {\sc K. Kurano,} 
{\it Numerical equivalence defined on Chow groups of Noetherian local 
rings}, 
Invent. Math. {\bf 157} (2004) 575-619.

\bibitem
{K5} {\sc K. Kurano,} 
{\it The singular Riemann-Roch theorem and Hilbert-Kunz functions,}
J. Algebra {\bf 304} (2006) 487-499.

\bibitem
{La2004} {\sc A. Langer},
{\it Semistable sheaves in positive characteristic},
Ann. Math. {\bf 159} (2004) 251-276. 

\bibitem
{Mi2003} {\sc C. Miller},
{\it The Frobenius endomorphism and homological dimensions}, 
Contemporary Mathematics {\bf 331} (2003) 207-234. 

\bibitem
{MSt} {\sc E. Miller and B. Sturmfels,} 
{\it Combinatorial Commutative Algebra}, Graduate Texts in Math. {\bf
  227}, Springer, New York, 2005. 

\bibitem
{MiS2013} {\sc L. Miller and I. Swanson}
{\it Hilbert-Kunz functions of $2 \times 2$ determinantal Rings},
Illinois J. Math. {\bf 57} (2013) 251-277. 

\bibitem
{MoT2019} {\sc M. Mondal and V. Trivedi},
{\it Hilbert-Kunz density function and asymptotic Hilbert-Kunz multiplicity for projective varieties}, 
J. Algebra {\bf 520} (2019) 479-516. 

\bibitem
{Mon1} {\sc P. Monsky}, 
{\it The Hilbert-Kunz function}, 
Math. Ann. {\bf 263} (1983) 43-49.

\bibitem
{Mon2} {\sc P. Monsky},
{\it The Hilbert-Kunz function of a characteristic 2 cubic},
J. Algebra {\bf 197} (1997) 268-277. 

\bibitem
{Mon3} {\sc P. Monsky}, 
{\it Hilbert-Kunz functions in a family: point-$S_4$ quartics,}
J. Algebra {\bf 208} (1998) 343-358.

\bibitem
{Mon4} {\sc P. Monsky}, 
{\it Hilbert-Kunz functions in a family: line-$S_4$ quartics,}
J. Algebra {\bf 208} (1998) 359-371. 

\bibitem
{Mon5}  {\sc P. Monsky}, 
{\it On the Hilbert-Kunz function of $z^D - p_4(x,y)$,}
J. Algebra {\bf 291} (2005) 350-372. 

\bibitem
{Mon2007}  {\sc P. Monsky}, 
{\it Hilbert-Kunz functions for irreducible plane curves,}
J. Algebra {\bf 316} (2007) 326-345. 

\bibitem
{Mon6} {\sc P. Monsky},
{\it Rationality of Hilbert-Kunz multiplicities; a likely counterexample}, 
Michigan Math. J. {\bf 57} (2008) 605-613. 

\bibitem
{Mon2011} {\sc P. Monsky},
{\it Hilbert-Kunz theory for nodal cubics, via sheaves}, 
J. Algebra {\bf 346} (2011) 180-188. 

\bibitem
{MoT2004} {\sc P. Monsky and P. Teixeira},
{\it $p$-fractals and power series. I. Some 2 variables results,}
J. Algebra {\bf 280} (2004) 505-536.

\bibitem
{MoT2006} {\sc P. Monsky and P. Teixeira},
{\it $p$-fractals and power series. II. Some applications to Hilbert-Kunz theory,}
J. Algebra {\bf 304} (2006) 237-255.

\bibitem
{Par1994} {\it K. Pardue}, 
{\it Nonstandard Borel-Fixed Ideals},
Doctoral Thesis, Brandeis University (1994). 

\bibitem
{PoT2018} {\sc T. Polstra and K. Tucker}
{\it $F$-signature and Hilbert-Kunz multiplicity: a combined approach and comparison}, 
Algebra Number Theory {\bf 12} (2018) 61-97. 

\bibitem
{PTYpre2021} {\sc F. P\'erez, K. Tucker, Y. Yao}, 
{\it Uniformity in reduction to characteristic $p$}, 
in preparation. 

\bibitem
{Ro1} {\sc P. Roberts}, 
{\it The vanishing of intersection multiplicities and perfect complexes}, 
Bull. Amer. Math. Soc. {\bf 13} (1985) 127-130. 

\bibitem
{R7}{\sc P. Roberts}, 
{\it Le th\'{e}or\`{e}me d'intersection, }
C. R. Acad. Sci. Paris S\'{e}r. I Math. {\bf 304} (1987) 177-180.

\bibitem
{R1} {\sc P. Roberts},
{\it Multiplicities and Chern Classes in Local Algebra,} Cambridge
Tracts in Mathematics 133, Cambridge University Press (1998).

\bibitem
{RoS} {\sc P. Roberts and V. Srinivas},
{\it Modules of finite length and finite projective dimension,}
Invent. Math. {\bf 151} (2003) 1-27.   

\bibitem
{RoSw2015} {\sc M. Robinson and I. Swanson}, 
{\it Explicit Hilbert-Kunz function of $2 \times 2$ determinantal rings},
Pacific J. Math. {\bf 275} (2015) 433-442. 

\bibitem
{Sam2009}{\sc S. Sam},
{\it A bijective proof for a theorem of Ehrhart}, 
Amer. Math. Monthly {\bf 116} (2009) 409-426. 

\bibitem
{Sch2013} {\sc A. Schalk}, 
{\it Hilbert-Kunz Functions for Certain Families of the Quotients of Polynomial Rings}, 
 Central Michigan University Masters Plan B Paper (Spring 2013) 30~pp.

\bibitem
{Sai2018} {\sc N. Saikali}, 
{\it Ehrhart Theory on Normal Affine Semigroup Rings and Its Application to Hilbert-Kuns Functions,}
Doctoral Thesis --Central Michigan University. 2018. 63 pp. 

\bibitem
{Ser1961} {\sc J.-P. Serre},
{\it Alg\`ebre Locale - Multiplicit\'es},
Lecture Notes in Mathematics {\bf 11} Springer-Verlag, New York, Berlin, Heidelberg (1961). 

\bibitem
{Sei} {\sc G. Seibert}, 
{\it Complexes with homology of finite length and Frobenius functors}, 
J. Algebra {\bf 125} (1989) 278–-287. 

\bibitem
{Sei1997} {\sc G. Seibert},
{\it The Hilbert-Kunz function of rings of finite Cohen-Macaulay type},
Arch. Math. {\bf 69} (1997) 286-296. 

\bibitem
{Sh1} {\sc I. Shafarevich}, 
{\it Basic Algebraic Geometry. 1. Varieties in Projective space}, third ed., 
Springer, Heidelberg (2013). 

\bibitem
{Sh2} {\sc I. Shafarevich}, 
{\it Basic Algebraic Geometry. 2. Schemes and complex manifolds}, third ed., 
Springer, Heidelberg (2013). 

\bibitem
{Si2005} {\sc A. Singh}, 
{\it The $F$-signature of an affine semigroup ring}, 
J. Pure Appl. Algebra {\bf 196} (2005) 313-321. 

\bibitem
{Smi2020} {\sc I. Smirnov}, 
{\it On semicontinuity of multiplicities in families}, 
Doc. Math. {\bf 25} (2020) 381-400. 

\bibitem
{St3} {\sc R.P. Stanley}, 
{\it Hilbert functions of graded algebras}, 
Adv. in Math. {\bf 28} (1978) 57-83. 

\bibitem
{St1} {\sc R.P. Stanley}, 
{\it Decompositions of Rational Convex Polytopes,}
Ann. Discrete Math. {\bf 6} (1980) 333-342. 

\bibitem
{St2} {\sc R.P. Stanley}, 
{\it Enumerative Combinatorics, Vol. I}, 2nd Ed., 
Cambridge Uni. Press (2012). 

\bibitem
{Tei2002} {\sc P. Teixeira}, 
{\it $p$-Fractals and Hilbert-Kunz Series}, 
Thesis, Brandeis University, 2002. 

\bibitem
{Tr2005} {\sc V. Trivedi}, 
{\it Semistability and Hilbert-Kunz multiplicity for curves}, 
J. Algebra {\bf 284} (2005) 627-644.  

\bibitem
{Tr2007} {\sc V. Trivedi}, 
{\it Hilbert-Kunz multiplicity and reduction mod $p$}, 
Nagoya Math. J. {\bf 185} (2007) 123-141. 

\bibitem
{Tr2016} {\sc V. Trivedi}, 
{\it Hilbert-Kunz functions of a Hirzebruch surface}, 
J. Algebra {\bf 457} (2016) 405-430.  

\bibitem
{Tr2017} {\sc V. Trivedi}, 
{\it Asymptotic Hilbert-Kunz multiplicity}, 
J. Algebra {\bf 492} (2017) 498-523. 

\bibitem
{Tr2018} {\sc V. Trivedi}, 
{\it Hilbert-Kunz density function and Hilbert-Kunz multiplicity}, 
Trans. Amer. Math. Soc. {\bf 370} (2018) 8403-8428. 

\bibitem
{Tr2019} {\sc V. Trivedi}, 
{\it Toward Hilbert-Kunz density functions in characteristic 0}, 
Nagoya Math. J. {\bf 235} (2019) 3331-3338. 

\bibitem
{TrW2021} {\sc V. Trivedi and K.-i. Watanabe},
{\it Hilbert-Kunz density functions and $F$-thresholds}, 
J. Algebra {\bf 567} (2021) 533-563. 

\bibitem
{Tu2012} {\sc K. Tucker}, 
{\it $F$-signature exists}, 
Invent. Math. {\bf 190} (2012) 488-504. 

\bibitem
{VonK2011} {\sc M. Von Korff}, 
{\it $F$-signature of affine toric varieties}, 
arXiv:1110.0552 (2011).


\bibitem
{Wat} {\sc K.-i. Watanabe,}
{\it Hilbert-Kunz multiplicity of toric rings,}
Proc. of The Inst. of Natural Sciences {\bf 35} (2000) 173-177. 

\bibitem
{WY1} {\sc K.-i. Watanabe and K.-i. Yoshida,}
{\it Hilbert-Kunz multiplicity and an inequality between multiplicity and 
colength,}
J. Algebra {\bf 230} (2000) 295-317. 

\bibitem
{WY2} {\sc K.-i. Watanabe and K.-i. Yoshida,}
{\it Hilbert-Kunz multiplicity of two dimensional local rings,} 
Nagoya Math. J. {\bf 162} (2001) 87-110. 

\bibitem
{WY4} {\sc K.-i. Watanabe and K.-i. Yoshida,}
{\it Minimal relative Hilbert-Kunz multiplicity}, 
Illinois J. Math. {\bf 428} (2004) 273-294. 

\bibitem
{WY3} {\sc K.-i. Watanabe and K.-i. Yoshida,}
{\it Hilbert-Kunz multiplicity of three-dimensional local rings,}
Nagoya Math. J. {\bf 177} (2005) 47-75. 

\bibitem
{Wo2005} {\sc K. Woods},
{\it Computing the period of an Ehrhart quasipolynomial},
Electron. J. Combin. {\bf 12} Research Paper 34 (2005) 12 pp. 

\bibitem
{Yao2006} {\sc Y. Yao}, 
{\it Observations on the $F$-signature of local rings of characteristic $p$}, 
J. Algebra {\bf 299} (2006) 198-218. 


\end{thebibliography}
\end{document}